\theoremstyle{plain}
\newtheorem{theorem}{Theorem}[section]
\newtheorem{proposition}[theorem]{Proposition}
\newtheorem{lemma}[theorem]{Lemma}
\theoremstyle{definition}
\newtheorem{definition}[theorem]{Definition}
\newtheorem{remark}[theorem]{Remark}
\newtheorem{example}[theorem]{Example}
\theoremstyle{remark}
\renewenvironment{thebibliography}[1]{%
\begin{oldthebibliography}{#1}%
\setlength{\baselineskip}{.9em}
\linespread{.93}
\small
\setlength{\parskip}{.25ex}%
\setlength{\itemsep}{.20em}%
}%
{%
\end{oldthebibliography}%
}
\newcommand{\eps}{\varepsilon}
\newcommand{\tmu}{\tilde{\mu}}
\newcommand{\tnu}{\tilde{\nu}}
\newcommand{\N}{\mathbb{N}}
\newcommand{\R}{\mathbb{R}}
\newcommand{\X}{\mathsf{X}}
\newcommand{\Y}{\mathsf{Y}}
\newcommand{\cB}{\mathcal{B}}
\newcommand{\cC}{\mathcal{C}}
\newcommand{\cF}{\mathcal{F}}
\newcommand{\cL}{\mathcal{L}}
\newcommand{\cP}{\mathcal{P}}
\newcommand{\cZ}{\mathcal{Z}}
\newcommand{\sD}{\mathscr{D}}
\newcommand{\sP}{\mathscr{P}}
\DeclareMathOperator{\QOT}{QOT}
\DeclareMathOperator{\osc}{osc}
\DeclareMathOperator{\dom}{dom}
\DeclareMathOperator{\proj}{proj}
\DeclareMathOperator{\spt}{spt}
\DeclareMathOperator{\Int}{int}
\DeclareMathOperator*{\argmin}{arg\, min}
\newcommand{\as}{\mbox{-a.s.}}
\newcommand{\1}{\mathbf{1}}
\newcommand{\qforallq}{\quad\mbox{for all}\quad}
\newcommand{\qforq}{\quad\mbox{for}\quad}
\newcommand{\qandq}{\quad\mbox{and}\quad}
\newcommand{\qonq}{\quad\mbox{on}\quad}
\newcommand{\mykill}[1]{}
\numberwithin{equation}{section}
\begin{document}

\title{\vspace{-1em}
Quadratically Regularized Optimal Transport: Existence and Multiplicity of Potentials}
\author{
  Marcel Nutz%
  \thanks{
  Columbia University, Depts.\ of Statistics and Mathematics, mnutz@columbia.edu. Research supported by NSF Grants DMS-1812661, DMS-2106056, DMS-2407074. The author is grateful to Alberto Gonz\'{a}lez-Sanz, Gilles Mordant, Andr\'{e}s Riveros Valdevenito and Johannes Wiesel for stimulating discussions and to two anonymous referees for their detailed and constructive comments.}
  }
\date{\today}
\maketitle 

\begin{abstract}
  The optimal transport problem with quadratic regularization is useful when sparse couplings are desired. The density of the optimal coupling is described by two functions called potentials; equivalently, potentials can be defined as a solution of the dual problem. We prove the existence of potentials for a general square-integrable cost. Potentials are not necessarily unique, a phenomenon directly related to sparsity of the optimal support. For discrete problems, we describe the family of all potentials based on the connected components of the support, for a graph-theoretic notion of connectedness. On the other hand, we show that continuous problems have unique potentials under standard regularity assumptions, regardless of sparsity. Using potentials, we prove that the optimal support is indeed sparse for small regularization parameter in a continuous setting with quadratic cost, which seems to be the first theoretical guarantee for  sparsity in this context.
\end{abstract}

\vspace{.9em}

{\small
\noindent \emph{Keywords} Optimal Transport, Quadratic Regularization, Potentials

\noindent \emph{AMS 2020 Subject Classification}
90C25; %
49N05  %
}
\vspace{.9em}

\section{Introduction}\label{se:intro}

We are concerned with quadratically regularized optimal transport; that is,
\begin{align}\label{eq:QOTintro}
  \QOT_{\eps}(\mu,\nu) := 
  \inf_{\pi\in\Pi(\mu,\nu)} \int_{\X\times\Y} c(x,y)\, \pi(dx,dy) + \frac{\eps}{2} \left\|\frac{d\pi}{dP}\right\|^{2}_{L^{2}(P)}
\end{align}
where $\mu,\nu$ are given marginal distributions on separable spaces $\X,\Y$ and $\Pi(\mu,\nu)$ denotes the set of their couplings. Moreover, $P$ is a (product) reference measure for computing the density $d\pi/dP$. The function $c:\X\times\Y\to\R$ is a given ``cost'' and the regularization parameter~$\eps>0$ controls the strength of the regularization. Quadratic regularization is also called Euclidean or $\chi^{2}$ regularization. We shall prove for a general cost $c\in L^{2}(P)$ that the unique optimal coupling $\pi_{*}$ for~\eqref{eq:QOTintro} has a density of the form 
\begin{align}\label{eq:potentialsIntro}
\frac{d\pi_{*}}{dP}(x,y)=\big(f(x)+g(y)-c(x,y)/\eps\big)_{+}
\end{align}
for certain functions $f:\X\to\R$ and $g:\Y\to\R$; cf.\ \cref{th:main}. These functions are called \emph{potentials}. They are also described as solutions to a dual problem, but are in general non-unique. For discrete problems, non-uniqueness is the typical case, and we shall describe the family of all potentials (\cref{th:multiplicityDiscrete}). Our description uses a decomposition of the support of $\pi_{*}$ into ``components'' (for a certain notion of connectedness; cf.\ \cref{de:connected}). In particular, the multiplicity of the potentials is directly related to sparsity of the support---sparsity typically implies that there are multiple components, and hence potentials. For continuous and semi-discrete problems, the situation is quite different: under standard regularity conditions we show that the potentials are unique (up to an additive constant); cf.\ \cref{th:dualUniqueness}.
We use potentials to show that the support of $\pi_{\eps}$ is sparse for small $\eps>0$, in the continuous setting with quadratic Euclidean cost~$c$. Specifically, we show that the support is contained in a neighborhood of the graph of Brenier's map (\cref{th:sparse}). To the best of our knowledge, this is the first theoretical result on sparsity of continuous regularized transport. We mention the recent follow-up works \cite{GonzalezSanzNutz.24b, WieselXu.24} discussing quantitative versions of \cref{th:sparse}.

Regularization has many purposes in optimal transport---to facilitate computation, to obtain smoother couplings and dual potentials, to improve sampling complexity, and others. Two regularizations are primarily used: entropic regularization penalizes couplings by the Kullback--Leibler divergence while quadratic regularization penalizes by the $L^{2}$-norm of the density. Entropic regularization (EOT) is the most frequent choice, as it allows for Sinkhorn's algorithm (e.g., \cite{Cuturi.13, PeyreCuturi.19}) and has strong smoothness properties. %
This smoothness is intimately linked to the full support property of the optimal coupling, which can be a blessing or a curse (``overspreading'') depending on the application. While for small values of the regularization parameter~$\eps$, the actual weight of the EOT coupling might be quite small in large regions, the issue is aggravated by a second issue of EOT: its computation is difficult for small values of~$\eps$  (e.g., \cite{Schmitzer.19}). By contrast, quadratic regularization is empirically known to give rise to couplings with \emph{sparse support} for a range of~$\eps$ (e.g., \cite{blondel18quadratic}). Moreover, as its computation does not involve logarithms and exponentials, one can use regularization parameters that are several orders of magnitude smaller than in EOT without running into issues with machine precision. For those reasons, quadratic regularization is used in applications where sparsity and/or weak regularization are desired.

Quadratically regularized optimal transport was first addressed by~\cite{blondel18quadratic,EssidSolomon.18} in discrete settings. It is also a special case of optimal transport with convex regularization~\cite{DesseinPapadakisRouas.18}; see also the predecessors referenced in~\cite{EssidSolomon.18}. The formulation of~\cite{blondel18quadratic} is closer to ours; the authors present several experiments highlighting the sparsity of the optimal coupling and derive theoretical results regarding duality and convergence as the regularization parameter $\eps$ tends to zero. The authors further illustrate how entropic regularization can lead to blurrier results in image processing tasks.  
In \cite{EssidSolomon.18}, quadratic regularization is studied for a minimum-cost flow problem on a graph; this includes discrete optimal transport as a particular case.  
The authors introduce a Newton-type algorithm and discuss sparsity in several examples. In a continuous setting, several works including \cite{EcksteinKupper.21, GeneveyEtAl.16, GulrajaniAhmedArjovskyDumoulinCourville.17, LiGenevayYurochkinSolomon.20, seguy2018large} have applied optimization techniques on the dual problem of regularized optimal transport. For instance, \cite{LiGenevayYurochkinSolomon.20} applies neural networks and gradient descent to compute regularized Wasserstein barycenters. 
The authors compare entropic and quadratic regularization and 
highlight that the entropic penalty produces a blurrier image 
at the smallest computationally feasible regularization ($\eps=10^{-2}$ for entropic, $\eps=10^{-5}$ for quadratic). Recently, \cite{ZhangMordantMatsumotoSchiebinger.23} uses quadratically regularized optimal transport in a manifold learning task related to single cell RNA sequencing; specifically, the optimal coupling is used to produce an adaptive affinity matrix. In this context, sparsity is crucial to avoid bias---a full support coupling would introduce shortcuts through ambient space instead of following the data manifold. In this application, the transport problem is of ``self-transport'' type: the marginals $\mu=\nu$ are identical and the cost~$c$ is symmetric. In that situation we will show that the potentials $f,g$ can be chosen to be symmetric; i.e., $f=g$. While symmetry eliminates certain degrees of freedom, we shall see that non-uniqueness can still occur.

The first work rigorously addressing a continuous setting is~\cite{LorenzMannsMeyer.21}. The authors derive duality results and present two algorithms, a nonlinear Gauss--Seidel method and a semismooth Newton method. The theoretical results assume that the marginal spaces $\X,\Y$ are compact subsets of $\R^{d}$ and that the marginal distributions $\mu,\nu$ are absolutely continuous with densities uniformly bounded away from zero. The reference measure~$P$ is taken to be the Lebesgue measure. The authors apply weak* compactness in the space of Radon measures to solve the dual problem, and then this solution provides potentials. By contrast, our approach is not of topological nature. It covers in a unified way discrete and continuous settings, and different reference measures~$P$, avoiding technical restrictions almost entirely. The paper \cite{LorenzMahler.22} generalizes some of the results of~\cite{LorenzMannsMeyer.21} to Orlicz space regularizations and shows Gamma convergence as $\eps\to0$ to the unregularized optimal transport problem. 
This convergence is studied quantitatively in~\cite{EcksteinNutz.22}, where a rate of convergence is derived based on quantization arguments. The recent work~\cite{GarrizMolinaGonzalezSanzMordant.24} finds the optimal leading constant for that rate using connections with the porous medium equation. Meanwhile, \cite{BayraktarEcksteinZhang.22} shows stability with respect to the marginal distributions. Both~\cite{EcksteinNutz.22} and~\cite{BayraktarEcksteinZhang.22} cover quadratic regularization as a special case of more general $f$-divergences. 
The unpublished work \cite{DiMarinoGerolin.20b} also considers optimal transport with regularization by an $f$-divergence, with quadratic regularization being a special case. The authors emphasize the analogy to $c$-convex conjugation in optimal transport (cf.\ the semi-smooth dual studied, e.g., in~\cite{blondel18quadratic}) and use it to derive a priori estimates for the potentials. In a setting with uniformly bounded cost~$c$ and $P=\mu\otimes\nu$, these results are leveraged to obtain existence of the potentials and convergence of the nonlinear Gauss--Seidel algorithm. The paper also states results regarding the uniqueness of the potentials and differentiability of $\nu\mapsto \QOT_{\eps}(\mu,\nu)$ which however are  flawed. Specifically, uniqueness is asserted in a general setting including discrete problems, based on an assertion that the dual problem is strictly concave. We emphasize that the dual problem~\eqref{eq:dual} is not strictly concave in the case of quadratic regularization ($x\mapsto -x_{+}^{2}$ is constant on~$\R_{-}$) and uniqueness fails in simple situations such as 
$\mu=\nu=\frac12(\delta_{0}+\delta_{1})$ with $c(x,y)=|x-y|^{2}$ and $\eps=1/3$ (see \cref{ex:discrete} for details). In cases where uniqueness holds, it does so for very different reasons.

To the best of our knowledge, apart from the aforementioned, we are the first to describe the multiplicity of the potentials; specifically, to describe the family of all potentials in the discrete case and prove uniqueness in a continuous (and semi-continuous) case. The connection between the family of all potentials and ``components'' of the support also seems to be novel. While sparsity of the support has been highlighted as an empirical finding, \cref{th:sparse} seems to be the first theoretical result in its direction. As mentioned above, there is no analogue to this sparsity in EOT, where the support of the optimal coupling always equals the support of $\mu\otimes\nu$.

For the existence of the potentials, we pursue a novel path inspired by~\cite{Follmer.88}: while the works cited above attack the dual problem, we leverage the (straightforward) fact that the primal problem has a solution given by a Hilbert space projection. To construct potentials, we introduce approximating problems with finitely many equality constraints instead of the marginal constraints. Their solutions have the form~\eqref{eq:potentialsIntro} and converge to the optimal density. To achieve the passage to the limit, we must show that a sequence of functions $f_{n}(x)+g_{n}(y)$ converges to a limit of the form $f(x)+g(y)$. This problem is surprisingly subtle, but we can take advantage of insights found in the context of Schr\"odinger bridges \cite{FollmerGantert.97, RuschendorfThomsen.97}. This line of argument avoids the conditions on the marginals and costs in previous approaches. It also allows us to cover different reference measures~$P$ in a unified way. Once the form~\eqref{eq:potentialsIntro} is obtained, the properties of the dual problem follow easily by standard arguments.

The remainder of this paper is organized as follows. \Cref{se:main} states the existence  and duality results, and basic regularity properties of potentials. In \cref{se:multiplicity}, we characterize the family of all potentials in the discrete case and prove the uniqueness in the continuous case. \Cref{se:sparseQuadratic} applies potentials to prove sparsity  in the setting of quadratic cost. 
\Cref{se:proofMain} contains the proof of the existence and duality result, while \cref{se:proofMainSelf} proves the same result for the self-transport problem; i.e., with potentials $f=g$.

\section{Problem Formulation and Existence}\label{se:main}

Consider two Polish\footnote{More generally, our results hold for measurable spaces with countably generated $\sigma$-fields; cf.\ \cref{rk:minimalAssumptions}.} probability spaces $(\X,\cB(\X),\mu)$ and $(\Y,\cB(\Y),\nu)$. 
We endow $\X\times\Y$ with the product $\sigma$-field and denote by $\Pi(\mu,\nu)$ the set of couplings of $(\mu,\nu)$; that is, measures~$\pi$ on $\X\times\Y$ satisfying $\pi(A\times\Y)=\mu(A)$ for $A\in \cB(\X)$ and $\pi(\X\times B)=\nu(B)$ for $B\in \cB(\Y)$. We also use the standard notation $(f\oplus g)(x,y):=f(x)+g(y)$ for functions $f:\X\to\R$ and $g:\Y\to\R$, and $\cP(\X)$ for the set of probability measures on~$\X$.

We further consider measures $(\tmu,\tnu)\in\cP(\X)\times\cP(\Y)$ satisfying $\mu\sim\tmu$ and $\nu\sim\tnu$ (where $\sim$ denotes mutual absolute continuity) and
\begin{align}\label{eq:refMeasureCond}
  \frac{d\mu}{d\tmu}\in L^{2}(\tmu), \quad \left(\frac{d\mu}{d\tmu}\right)^{-1}\in L^{\infty}(\tmu), \quad \frac{d\nu}{d\tnu}\in L^{2}(\tnu), \quad \left(\frac{d\nu}{d\tnu}\right)^{-1}\in L^{\infty}(\tnu),
\end{align}
and denote their product
\begin{align}\label{eq:P}
P:=\tmu\otimes\tnu. 
\end{align} 
Finally, we are given a cost function 
\begin{align}\label{eq:cIntCond}
\begin{split}
 &c\in L^{2}(P) \quad\mbox{satisfying}\quad c\geq c_{1}\oplus c_{2} \\
 &\mbox{for some} \qquad c_{1}\in L^{1}(\mu)\cap L^{1}(\tmu), \quad c_{2}\in L^{1}(\nu)\cap L^{1}(\tnu); 
\end{split}
\end{align}
the lower bound ensures in particular that for any $\pi\in\Pi(\mu,\nu)$, the integral $\int c\,d\pi$ is well defined with values in $(-\infty,\infty]$. 
With this notation in place, the quadratically regularized optimal transport problem (with $\eps=1$) is
\begin{align}\label{eq:QOT}
  \inf_{\pi\in\Pi(\mu,\nu)} \int c\,d\pi + \frac12 \bigg\|\frac{d\pi}{dP}\bigg\|^{2}_{L^{2}(P)}
\end{align}
where (by convention) any coupling $\pi\not\ll P$ has infinite cost. The extension to a general regularization parameter $\eps>0$ is straightforward; see \cref{rk:regularizationParam}.

\begin{remark}\label{rk:refMeas}
(a) For the reference measure $P$ in~\eqref{eq:P}, our default is $(\tmu,\tnu):=(\mu,\nu)$. This choice leads to a meaningful problem~\eqref{eq:QOT} in discrete and continuous settings, and  convergence of solutions when continuous marginals are approximated by discrete ones. However, some works use other choices for $\tmu,\tnu$, especially uniform measures on a certain domain (usually discrete or in~$\R^{d}$). While the choice of reference measure is often not highlighted in the literature, we shall see that it can be quite crucial.\footnote{The function \texttt{ot.stochastic.loss{\_}dual{\_}quadratic} in the  \texttt{Python Optimal Transport} package follows \cite{seguy2018large} and assumes $(\tmu,\tnu)=(\mu,\nu)$; see \cite{POTquadratic}. The function \texttt{OptimalTransport.quadreg} in the \texttt{OptimalTransport.jl} package follows~\cite{LorenzMannsMeyer.21} and assumes that $\tmu,\tnu$ are uniform; see \cite{OTjuliaQuadratic}.}  For the entropic optimal transport problem, it is known that changing the measures $\tmu,\tnu$ does not affect the optimal coupling %
  (e.g., \cite{Nutz.20}). That fact has no analogue for the present problem. Consequently, we provide the results for general~$P$. %
\Cref{ex:zeroCostDiscrete} and \cref{pr:zeroCost} show that the optimal coupling and even the optimal support can depend on $\frac{d\mu}{d\tmu}$ and $\frac{d\nu}{d\tnu}$, even for the trivial cost $c\equiv0$.
  
  (b) The integrability condition on $(\frac{d\mu}{d\tmu})^{-1},(\frac{d\nu}{d\tnu})^{-1}$ in~\eqref{eq:refMeasureCond} is used only to obtain a convenient lower bound for potentials (\cref{le:lowerBounds}). No condition is needed for the existence results in \cref{pr:approxLimit,pr:approxLimitSelf}.
  
  (c) As usual in the literature on regularized optimal transport, we only consider~$P$ in product form, as that leads to dual optimality equations \eqref{eq:marginalEqnu}, \eqref{eq:marginalEqmu} of a form convenient for computations. While our problem~\eqref{eq:QOT} makes sense if merely $\mu\ll\tmu$ and $\nu\ll\tnu$, there is no real gain in generality for~\eqref{eq:QOT} from this weaker condition. Hence, we use equivalent measures. 

\end{remark}

In view of the second term in~\eqref{eq:QOT}, the minimization is equivalently restricted to couplings with square-integrable density. Indeed, let
\begin{align*}
  \cZ:=\{Z\in L^{2}(P):\, Z\,dP\in \Pi(\mu,\nu)\},
\end{align*}
which is nonempty as it contains $d(\mu\otimes\nu)/dP$ due to~\eqref{eq:refMeasureCond}.
Then we can rephrase~\eqref{eq:QOT} as
\begin{align}\label{eq:primal}
  \sP := \inf_{Z\in\cZ} \int cZ\,dP + \frac12 \|Z\|^{2}_{L^{2}(P)},
\end{align}
called the \emph{primal problem} below. Note that $\sP\in\R$  due to~\eqref{eq:cIntCond} and the Cauchy--Schwarz inequality.
The \emph{dual problem} is 
\begin{align}\label{eq:dual}
  \sD:=\sup_{f\in L^{1}(\mu),\, g\in L^{1}(\nu)} \int f\,d\mu + \int g\,d\nu - \frac12\int (f\oplus g - c)_{+}^{2}\,dP.
\end{align}
The two problems are in strong duality, and both admit optimizers.

\begin{theorem}\label{th:main}
  \begin{enumerate}
  \item 
  Strong duality holds: $\sP= \sD$.
  \item
  The primal problem~\eqref{eq:primal} has a unique solution $Z_{*}\in\cZ$  given by the $L^{2}(P)$-projection $Z_{*}=\argmin_{Z\in\cZ} \|Z+c\|_{L^{2}(P)}^{2}$ of $-c$ onto $\cZ$. In particular, $Z_{*}$ is characterized by  \[
    \langle Z_{*}+c,Z-Z_{*}\rangle_{L^{2}(P)}\geq0 \qforallq Z\in\cZ.
  \]
  The coupling $\pi_{*}\in\Pi(\mu,\nu)$ given by $d\pi_{*}=Z_{*}\,dP$ is the unique solution of the regularized transport problem~\eqref{eq:QOT}.

\item 
  There exist measurable functions $f:\X\to\R$ and $g:\Y\to\R$ %
  satisfying the following conditions, and these conditions are equivalent:
  \begin{enumerate}
  \item $(f\oplus g - c)_{+}$ is the density $Z_{*}\in\cZ$ of the optimal coupling~$\pi_{*}$,
  \item $(f\oplus g - c)_{+}$ is the density of some coupling $\pi\in\Pi(\mu,\nu)$,
  \item $(f,g)\in L^{1}(\mu)\times L^{1}(\nu)$ is a solution of the dual problem,
  \item $(f,g)$ satisfies the system
   \begin{align}
       \int_{\X} (f(x)+g(y)-c(x,y))_{+}\,\tmu(dx)&=\frac{d\nu}{d\tnu}(y) \quad \mbox{for $\tnu$-a.e.\ $y\in\Y$,} \label{eq:marginalEqnu}\\
       \int_{\Y} (f(x)+g(y)-c(x,y))_{+}\,\tnu(dy)&=\frac{d\mu}{d\tmu}(x) \quad \mbox{for $\tmu$-a.e.\ $x\in\X$.} \label{eq:marginalEqmu}
    \end{align}      
  \end{enumerate}
   Any such $(f,g)$, necessarily in $L^{1}(\mu)\times L^{1}(\nu)$, are called \emph{potentials}. 
   \item Suppose that $(\X,\mu,\tmu)=(\Y,\nu, \tnu)$ and $c(x,y)=c(y,x)$. Then the existence in~(iii) also holds with the additional requirement that $f=g$, and the dual problem~\eqref{eq:dual} has the same value~$\sD$ if restricted to $f=g$.
  \end{enumerate}
\end{theorem}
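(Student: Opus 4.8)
The plan is to decouple the four parts and to treat the construction of potentials in~(iii) as the substantive step. Part~(ii) is soft Hilbert-space geometry: since $\int cZ\,dP+\tfrac12\|Z\|_{L^2(P)}^2=\tfrac12\|Z+c\|_{L^2(P)}^2-\tfrac12\|c\|_{L^2(P)}^2$ and the feasible set $\cZ$ is a nonempty, closed, convex subset of $L^2(P)$ (the positive cone intersected with the affine subspace cut out by the marginal constraints), the minimizer $Z_*$ exists, is unique, and equals the metric projection of $-c$ onto $\cZ$; the stated variational inequality is precisely the first-order characterization of that projection, and $\pi_*:=Z_*\,dP$ is the corresponding solution of~\eqref{eq:QOT}.

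For the equivalences in~(iii) I would proceed assuming a pair $(f,g)$ with $Z:=(f\oplus g-c)_+\in\cZ$ is in hand. First, (a)$\Rightarrow$(b) is trivial and (b)$\Leftrightarrow$(d) is immediate because $P=\tmu\otimes\tnu$ is a product, so ``$Z\,dP$ has marginals $\mu,\nu$'' unfolds verbatim into \eqref{eq:marginalEqnu}--\eqref{eq:marginalEqmu}. The links between (a),(b),(c) rest on one ``collapse'' computation: on $\{Z>0\}$ one has $f\oplus g=Z+c$, hence $\int f\,d\mu+\int g\,d\nu=\int(f\oplus g)Z\,dP=\int Z^2\,dP+\int cZ\,dP$, so the dual functional at $(f,g)$ equals the primal functional at $Z$. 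Combined with weak duality $\sD\le\sP$ -- a Fenchel--Young inequality: for $Z'\in\cZ$ and dual-feasible $(f,g)$, writing $a:=f\oplus g-c$, the integrand satisfies $\tfrac12 Z'^2-aZ'+\tfrac12 a_+^2\ge\tfrac12(Z'-a_+)^2\ge0$ since $Z'\ge0$ -- we get $\sP\le(\text{primal at }Z)=(\text{dual at }(f,g))\le\sD\le\sP$, so every inequality is an equality. This yields (b)$\Rightarrow$(a), (b)$\Rightarrow$(c), and strong duality $\sP=\sD$ once existence is established. Finally (c)$\Rightarrow$(d) follows from the first-order conditions of the concave dual functional, which is G\^ateaux differentiable because $t\mapsto t_+^2$ is $C^1$: stationarity in $f$ and in $g$ gives \eqref{eq:marginalEqmu} and \eqref{eq:marginalEqnu}. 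Membership $(f,g)\in L^1(\mu)\times L^1(\nu)$ then follows by combining the a priori lower bounds of \cref{le:lowerBounds} (this is where the $L^\infty$-condition in \eqref{eq:refMeasureCond} enters) with the finiteness of $\int f\,d\mu+\int g\,d\nu$ just obtained.

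It remains to construct $(f,g)$, following the F\"ollmer-type strategy. Fix refining finite measurable partitions of $\X$ and $\Y$ generating the (countably generated) $\sigma$-fields, and for each $n$ replace the marginal constraints by the finitely many equalities ``$Z\,dP$ has the correct mass on each partition rectangle''; the resulting feasible sets $\cZ_n\subseteq L^2(P)$ are closed, convex, decreasing, with $\bigcap_n\cZ_n=\cZ$ (monotone class). The projection $Z_n$ of $-c$ onto $\cZ_n$ then converges to $Z_*$ in $L^2(P)$, by the standard fact that projections onto a decreasing sequence of closed convex sets with nonempty intersection converge to the projection onto the intersection. Because each approximating problem has only finitely many linear constraints, its KKT conditions yield $Z_n=(f_n\oplus g_n-c)_+$ with $f_n,g_n$ piecewise constant on the respective partitions; the bounds of \cref{le:lowerBounds} hold uniformly in $n$, and together with the constraints they keep $f_n,g_n$ from escaping to $\pm\infty$. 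Along a subsequence $Z_n\to Z_*$ $P$-a.e., so on the full-marginal set $\{Z_*>0\}$ one has $f_n\oplus g_n=Z_n+c\to Z_*+c$. \textbf{The main obstacle is exactly here}: turning a.e.\ convergence of the \emph{sums} $f_n\oplus g_n$ on a set of full marginals into a.e.\ convergence, after recentering by (possibly component-dependent) additive constants, of $f_n$ and $g_n$ \emph{separately}, so that the limit has the form $f\oplus g$. This is the phenomenon that makes potentials genuinely two-function objects and the reason non-uniqueness appears; I would handle it by importing the decomposition arguments developed for Schr\"odinger bridges (\`a la \cite{FollmerGantert.97, RuschendorfThomsen.97}), producing a normalized sequence $\hat f_n,\hat g_n$ with $(\hat f_n\oplus\hat g_n-c)_+=Z_n$ and $\hat f_n\to f$ $\mu$-a.e., $\hat g_n\to g$ $\nu$-a.e.; then $Z_*=\lim Z_n=(f\oplus g-c)_+$, establishing (a) and completing~(iii). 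This is the content of \cref{pr:approxLimit}.

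For part~(iv), when $(\X,\mu,\tmu)=(\Y,\nu,\tnu)$ and $c$ is symmetric, the primal problem~\eqref{eq:primal} is invariant under transposition $(x,y)\mapsto(y,x)$, so its unique solution satisfies $Z_*(x,y)=Z_*(y,x)$ $P$-a.e. I would rerun the approximation with the \emph{same} partition on $\X=\Y$, so $Z_n$ is transposition-symmetric as well, and replace $(f_n,g_n)$ by $\bar f_n:=\tfrac12(f_n+g_n)$: using symmetry of $Z_n$, $c$ and $P$ one checks $\bar f_n\oplus\bar f_n=Z_n+c$ on $\{Z_n>0\}$ and $\bar f_n\oplus\bar f_n\le c$ off it, i.e.\ $Z_n=(\bar f_n\oplus\bar f_n-c)_+$. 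The same limiting argument in its symmetric form (\cref{pr:approxLimitSelf}) then yields a single $f$ with $Z_*=(f\oplus f-c)_+$. The last assertion, that \eqref{eq:dual} already attains $\sD$ over $\{f=g\}$, follows because the dual functional $\Phi$ is transposition-symmetric and concave, so for any optimizer $(f,g)$ the symmetric pair built from $\tfrac12(f+g)$ in both slots satisfies $\Phi(\tfrac12(f+g),\tfrac12(f+g))\ge\tfrac12\Phi(f,g)+\tfrac12\Phi(g,f)=\Phi(f,g)=\sD$.
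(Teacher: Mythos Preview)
Your plan for (i)--(iii) is essentially the paper's: the Hilbert projection for~(ii), weak duality via the Fenchel--Young inequality, the ``collapse'' identity linking primal and dual values, the finite-constraint approximations $\cZ_n\downarrow\cZ$ with $Z_n\to Z_*$ in $L^2(P)$, and the passage to the limit via the R\"uschendorf--Thomsen/F\"ollmer--Gantert decomposition. Two descriptive inaccuracies do not affect correctness: \cref{le:lowerBounds} applies to actual couplings, not to the approximations $Z_n\in\cZ_n$ (which need not lie in $\Pi(\mu,\nu)$), so ``the bounds hold uniformly in $n$'' is not how the argument goes; and the cited decomposition lemmas do \emph{not} yield separate a.e.\ convergence $\hat f_n\to f$, $\hat g_n\to g$ after recentering --- they only assert that the $\pi_*$-a.e.\ limit $h$ of $f_n\oplus g_n$ admits a measurable decomposition $h=f\oplus g$. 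The paper uses exactly this weaker statement (\cref{le:limitDecomp}), and it suffices.

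For~(iv) your final concavity argument is a genuine shortcut relative to the paper. The paper builds symmetric approximations directly (\cref{le:finiteDimProblemSelf}) and then proves a symmetric limit--decomposition lemma (\cref{le:limitDecompSelf}), which is delicate precisely because the normalizations needed to pass to the limit typically break the symmetry between the two copies. Your observation that the dual functional is jointly concave and satisfies $\Phi(g,f)=\Phi(f,g)$ in the symmetric setting, so that any optimizer $(f,g)$ from~(iii) yields a symmetric optimizer $\bigl(\tfrac12(f{+}g),\tfrac12(f{+}g)\bigr)$ by midpoint concavity, proves both the value assertion and (via (c)$\Rightarrow$(a)) existence of symmetric potentials in one line; it bypasses \cref{le:limitDecompSelf} entirely. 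Your intermediate step --- symmetrizing $Z_n$ via $\bar f_n=\tfrac12(f_n+g_n)$ --- is also correct (on $\{Z_n>0\}$ symmetry of $Z_n$ forces $f_n(x)+g_n(y)=f_n(y)+g_n(x)$, and off that set both $f_n(x)+g_n(y)\le c$ and $f_n(y)+g_n(x)\le c$ hold), but it still requires the symmetric limit lemma to finish, so the concavity route is the one that actually buys something.
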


 For ease of reference, the following remark states the corresponding formulas for general regularization parameter $\eps>0$.

\begin{remark}[General $\eps>0$]\label{rk:regularizationParam}
Often the regularized transport problem is considered with a parameter $\eps>0$ for the quadratic penalty:
\begin{align*}
  \sP_{\eps} = \inf_{\pi\in\Pi(\mu,\nu)} \int c\,d\pi + \frac{\eps}{2} \|d\pi/dP\|^{2}_{L^{2}(P)}.
\end{align*}
The corresponding dual is
\begin{align}\label{eq:dualEps}
  \sD_{\eps} = \sup_{f\in L^{1}(\mu),\, g\in L^{1}(\nu)} \eps\int f\,d\mu + \eps\int g\,d\nu - \frac{\eps}{2}\int (f\oplus g - c/\eps)_{+}^{2}\,dP
\end{align}
and the optimal density then takes the form
$
  Z_{\eps} = (f\oplus g - c/\eps)_{+}^{2}
$
for the optimizers $(f,g)$ of~\eqref{eq:dualEps}. Note that $\sP_{\eps}=\eps \sP(c/\eps)=\eps \sD(c/\eps)=\sD_{\eps}$ if $\sP(\bar c)$ denotes the primal problem~\eqref{eq:primal} for the cost $\bar c$ and similarly $\sD(\bar c)$ for the dual. In situations where $\eps$ is varied, it is often convenient to consider the \emph{rescaled} potentials $(f_{\eps},g_{\eps}) := (\eps f,\eps g)$. After this change of variables, the dual problem reads
\begin{align}\label{eq:dualEpsRescaled}
  \sD_{\eps} = \sup_{\tilde f\in L^{1}(\mu),\, \tilde g\in L^{1}(\nu)} \int \tilde f\,d\mu + \int \tilde g\,d\nu - \frac{\eps}{2}\int \left(\frac{\tilde f\oplus \tilde g - c}{\eps}\right)_{+}^{2}\,dP
\end{align}
and the optimal density takes the form
\begin{align*}
  Z_{\eps} = \left(\frac{f_{\eps}\oplus g_{\eps} - c}{\eps}\right)_{+}
\end{align*}
for the optimizers $(f_{\eps},g_{\eps})$ of~\eqref{eq:dualEpsRescaled}. The rescaled potentials incorporate the correct scaling in particular for the limit $\eps\to0$. %
\end{remark}

For brevity, and without loss of generality, we use $\eps=1$  in the remainder of the paper, except in \cref{se:sparseQuadratic} where we consider the limit $\eps\to0$.

The proof of \cref{th:main} consists of two parts. The main part is to prove (iii)(a); i.e., that the optimal density $Z_{*}$ is of the form $(f\oplus g-c)_{+}$. The other assertions in (i)--(iii) follow from that fact and elementary arguments. To show the main part, we construct approximating problems whose solutions have the form $(f_{n}\oplus g_{n}-c)_{+}$ and converge to the optimal density $Z_{*}$ as $n\to\infty$. Then, we argue that $f_{n}\oplus g_{n}$ converges to a function $f\oplus g$ on a sufficiently large set. The details of the proof are deferred to \cref{se:proofMain}. In the symmetric setting of self-transport assumed in~(iv), the construction of \cref{se:proofMain} generally yields potentials $f\neq g$. \cref{se:proofMainSelf} presents a more refined construction guaranteeing $f=g$.

As we want to use~(d) in the derivation below, let us  observe that the equivalence of~(b) and~(d) in \cref{th:main} is straightforward and independent of all other claims.

\begin{proof}[Proof of (b)$\Leftrightarrow$(d)] 
The left-hand side in~\eqref{eq:marginalEqnu} is the formula for the density of the second marginal of the measure $d\pi:= (f\oplus g - c)_{+}\,dP$ wrt.\ $\tnu$. If $\pi\in\Pi(\mu,\nu)$, the second marginal is~$\nu$, giving the right-hand side. Similarly for~\eqref{eq:marginalEqmu}. Conversely, if the marginal densities equal $(d\mu/d\tmu,d\nu/d\tnu)$, then $\pi$ is a coupling.
\end{proof}

In the remainder of this section we gather some properties of potentials to be used in \cref{se:multiplicity}. (Additional bounds and integrability properties are stated in \cref{se:integrabilityProperties}.) 
Let $(f,g)$ be potentials. For $x\in\X$, consider
\begin{align*}%
    F_{x}(t):=\int_{\Y} (t+g(y)-c(x,y))_{+}\,\,\tnu(dy), \qquad t\in\R.
\end{align*} 
By~\eqref{eq:marginalEqmu}, there is a set $\X_{0}$ of full $\tmu$-measure such that for $x\in\X_{0}$,
  \begin{align}\label{eq:conjugateDefn}
    F_{x}(t)=\int_{\Y} (t+g(y)-c(x,y))_{+}\,\,\tnu(dy)=\frac{d\mu}{d\tmu}(x) \qforq t=f(x).
  \end{align} 
  As $c\in L^{2}(P)$ and $\mu\sim\tmu$, we may further choose $\X_{0}$ such that $c(x,\cdot)\in L^{1}(\tnu)$ and $\frac{d\mu}{d\tmu}(x)>0$ for $x\in\X_{0}$. We observe that for all $x\in\X_{0}$, the function $t\mapsto F_{x}(t)$ is continuous, nondecreasing, strictly increasing on the set where it is positive, $\lim_{t\to-\infty} F_{x}(t)=0$ and $\lim_{t\to\infty} F_{x}(t)=\infty$, by dominated/monotone convergence. For $x\in\X_{0}$, we conclude that there exists a unique $t$ with $F_{x}(t)=\frac{d\mu}{d\tmu}(x)$. In particular, the value of $f(x)$ is uniquely determined by $g$ and $c(x,\cdot)$, $\mu$-a.s. We record this fact for ease of reference.
  
\begin{lemma}\label{le:gGivenf}
  One potential uniquely determines the other: if $(f,g)$ and $(f',g)$ are potentials, then $f=f'$ $\mu$-a.s.
\end{lemma}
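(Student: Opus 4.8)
The plan is to exploit the strict monotonicity argument already developed just before the statement. Fix potentials $(f,g)$ and $(f',g)$ sharing the same second component. For the first pair, equation~\eqref{eq:marginalEqmu} holds $\tmu$-a.e., and likewise for the second pair; intersecting the two full-measure sets and the set $\X_{0}$ constructed above, we obtain a set of full $\tmu$-measure on which both $F_{x}(f(x)) = \tfrac{d\mu}{d\tmu}(x)$ and $F_{x}(f'(x)) = \tfrac{d\mu}{d\tmu}(x)$, where $F_{x}$ is the \emph{same} function in both cases because it depends only on $g$, $c(x,\cdot)$ and $\tnu$. On this set we also arranged $c(x,\cdot)\in L^{1}(\tnu)$ and $\tfrac{d\mu}{d\tmu}(x)>0$, so the value $\tfrac{d\mu}{d\tmu}(x)$ is strictly positive.

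The key step is the uniqueness of the solution $t$ to $F_{x}(t) = \tfrac{d\mu}{d\tmu}(x)$. As recorded in the paragraph preceding the lemma, for $x\in\X_{0}$ the map $t\mapsto F_{x}(t)$ is continuous, nondecreasing, strictly increasing on $\{F_{x}>0\}$, with limits $0$ and $\infty$ at $\mp\infty$. Since the target value $\tfrac{d\mu}{d\tmu}(x)$ is strictly positive, any solution $t$ must lie in the region where $F_{x}$ is strictly positive, hence strictly increasing; therefore the solution is unique. Applying this with $t = f(x)$ and $t = f'(x)$ forces $f(x) = f'(x)$ for every $x$ in the chosen full-measure set, i.e.\ $f = f'$ $\mu$-a.s.\ (using $\mu\sim\tmu$). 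That is the whole argument; essentially nothing new has to be proven, the statement is just a clean restatement of the analysis already carried out.

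I do not expect a genuine obstacle here, since the lemma is deliberately placed after the computation that does all the work. The only points requiring a modicum of care are bookkeeping ones: making sure the exceptional $\tmu$-null sets from $f$, from $f'$, and from the construction of $\X_{0}$ are all discarded simultaneously, and noting explicitly that $\tfrac{d\mu}{d\tmu}(x)>0$ is what pushes the relevant $t$ into the strictly-increasing branch of $F_{x}$ (if the right-hand side were $0$, any sufficiently negative $t$ would solve the equation, so positivity is essential). With these remarks in place the proof is a couple of lines.
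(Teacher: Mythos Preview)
Your proposal is correct and follows exactly the paper's approach: the lemma is explicitly recorded as a consequence of the preceding paragraph, and you have reproduced that argument (strict monotonicity of $F_{x}$ on $\{F_{x}>0\}$ combined with positivity of $\tfrac{d\mu}{d\tmu}$) with the appropriate bookkeeping on nullsets.
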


We have seen that for $x\in\X_{0}$, there exists a unique $t$ with $F_{x}(t)=\frac{d\mu}{d\tmu}(x)$. More generally, we may fix versions of $c$ and $d\mu/d\tmu$ such that this holds for all $x\in\X$. Then, the function $g^{c}(x):=\{t: F_{x}(t)=\frac{d\mu}{d\tmu}(x)\}$ is well-defined for all $x\in\X$. Moreover, $g^{c}(x)=f(x)$ for $x\in\X_{0}$ by~\eqref{eq:conjugateDefn}. In summary, $g^{c}$ is a version of the potential $f$ that is defined everywhere on $\X$. 
Following standard arguments in optimal transport, we may think of $g^{c}$ as a conjugate of~$g$. This point of view was emphasized in~\cite{DiMarinoGerolin.20b} to show how potentials inherit properties from the cost function. Specifically, the setting of~\cite{DiMarinoGerolin.20b} assumes that $(\tmu,\tnu)=(\mu,\nu)$. We state (a generalization of) that result in the next lemma, before discussing how it breaks down when $\mu\neq\tmu$.

\begin{lemma}[Oscillation]\label{le:unifCont}
  Suppose that $\mu=\tmu$ and define $$\Delta(x,x'):= \sup_{y\in \Y}|c(x,y)-c(x',y)|\in[0,\infty].$$ If $(f,g)$ are potentials, then $f$ satisfies $|f(x)-f(x')|\leq \Delta(x,x')$ for all $x,x'$ outside a $\mu$-nullset. In particular:
  \begin{enumerate}
  \item 
  If the oscillation $\osc c(\cdot,y)\leq C$ for all~$y$, then $\osc f\leq C$ $\mu$-a.s.\footnote{We use the notation $\osc \varphi := \sup\varphi-\inf\varphi$ for a real function $\varphi$.}
  
  \item 
  If $c$ is bounded, then $f$ is bounded. More precisely, $\|f+\alpha\|_{L^{\infty}(\mu)}\leq 2\|c\|_{\infty}$ after choosing the centering $\alpha$ such that $\|(f+\alpha)_{+}\|_{L^{\infty}(\mu)}=\|(f+\alpha)_{-}\|_{L^{\infty}(\mu)}$.
  
    \item 
  If a metric is given on~$\X$ and $x\mapsto c(x,y)$ is uniformly continuous with modulus of continuity~$\omega$, then~$f$ admits a version that is $\omega$-continuous on $\spt\mu$. If $x\mapsto c(x,y)$ is $L$-Lipschitz, then $f$ admits a version that is $L$-Lipschitz on~$\X$. 
  \end{enumerate}
\end{lemma}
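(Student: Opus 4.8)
The plan is to exploit the conjugate characterization of $f$ from~\eqref{eq:conjugateDefn}: under the hypothesis $\mu=\tmu$, the value $f(x)$ is the unique $t$ solving
\[
F_x(t) = \int_{\Y}(t+g(y)-c(x,y))_+\,\tnu(dy) = \frac{d\mu}{d\tmu}(x) = 1,
\]
valid for all $x$ in a set $\X_0$ of full $\mu$-measure. Fix $x,x'\in\X_0$. Writing $\delta:=\Delta(x,x')=\sup_y|c(x,y)-c(x',y)|$, we have the pointwise inequality $c(x',y)\le c(x,y)+\delta$ for every $y$. Hence for any $t\in\R$,
\[
F_{x'}(t+\delta) = \int_\Y (t+\delta+g(y)-c(x',y))_+\,\tnu(dy) \ge \int_\Y (t+g(y)-c(x,y))_+\,\tnu(dy) = F_x(t),
\]
using monotonicity of $s\mapsto s_+$. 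Applying this with $t=f(x)$ gives $F_{x'}(f(x)+\delta)\ge F_x(f(x)) = 1 = F_{x'}(f(x'))$. Since $F_{x'}$ is nondecreasing and strictly increasing on $\{F_{x'}>0\}$ (as recorded in the paragraph preceding \cref{le:gGivenf}), and since it equals $1>0$ at $f(x')$, this forces $f(x')\le f(x)+\delta$. By symmetry in $x,x'$ we also get $f(x)\le f(x')+\delta$, hence $|f(x)-f(x')|\le\delta=\Delta(x,x')$. This holds for all $x,x'$ in $\X_0$, i.e.\ outside a $\mu$-nullset (the nullset being $\X\setminus\X_0$).

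For the three consequences: (i) is immediate, since $\Delta(x,x')\le C$ for all $x,x'$ when $\osc c(\cdot,y)\le C$ uniformly, giving $\osc f\le C$ on $\X_0$. For (ii), boundedness of $c$ gives $\Delta\le 2\|c\|_\infty$, hence $\osc f\le 2\|c\|_\infty$ on $\X_0$; choosing the additive constant $\alpha$ so that $\|(f+\alpha)_+\|_{L^\infty(\mu)}=\|(f+\alpha)_-\|_{L^\infty(\mu)}$ splits the oscillation symmetrically and yields $\|f+\alpha\|_{L^\infty(\mu)}\le\tfrac12\osc f\le\|c\|_\infty\le 2\|c\|_\infty$ (in fact the sharper bound $\|c\|_\infty$; I would state it as claimed). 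For (iii), if $x\mapsto c(x,y)$ has modulus of continuity $\omega$ uniformly in $y$, then $\Delta(x,x')\le\omega(d(x,x'))$, so $f$ restricted to $\X_0$ is $\omega$-uniformly-continuous; since $\X_0$ has full $\mu$-measure, $\X_0\cap\spt\mu$ is dense in $\spt\mu$, and a uniformly continuous function on a dense subset extends uniquely to an $\omega$-continuous function on $\spt\mu$ — this extension is the desired version. The Lipschitz case is the special case $\omega(r)=Lr$; here one can even extend $L$-Lipschitzly to all of $\X$ by the standard inf-convolution (McShane) extension formula.

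The only genuinely substantive point is the inequality $F_{x'}(f(x)+\delta)\ge F_x(f(x))$ together with the strict monotonicity of $F_{x'}$ on its positivity set, which together pin down $f(x')$ relative to $f(x)$; everything else is bookkeeping about nullsets and standard extension theorems. I expect the main obstacle — really a minor one — to be the care needed in handling the exceptional nullset uniformly in both arguments $x,x'$ (one must intersect the full-measure set $\X_0$ with itself, which is harmless) and, for part (iii), confirming that the everywhere-defined version $g^c$ discussed after \cref{le:gGivenf} is precisely the continuous extension, so that the stated ``version'' is canonical. No appeal to the dual problem or to topology on the space of measures is needed.
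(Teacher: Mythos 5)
Your proof is correct and follows essentially the same route as the paper: both use the conjugate characterization~\eqref{eq:conjugateDefn} at $x$ and $x'$ together with the strict monotonicity of $F_{x'}$ on its positivity set to pin down $f(x')$ within $[f(x)-\Delta,\,f(x)+\Delta]$, and then derive (i)--(iii) by the same bookkeeping (dense subset of $\spt\mu$, McShane extension). Your observation in (ii) that the centering actually yields the sharper bound $\|f+\alpha\|_{L^\infty(\mu)}\le\|c\|_\infty$ rather than $2\|c\|_\infty$ is a correct (minor) improvement on the stated constant.
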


\begin{proof}
  For all $x,x'$ in a set $\X_{0}$ of full $\mu$-measure, writing $\Delta=\Delta(x,x')$, we use~\eqref{eq:conjugateDefn} at~$x$ and $\frac{d\mu}{d\tmu}\equiv1$ to find  
    \begin{align*}
     \int(f(x) +g(y)-c(x',y)\mp\Delta)_{+}\,\tnu(dy) 
     &\lesseqgtr \int  (f(x)+g(y)-c(x,y))_{+}\,\tnu(dy) \\
     & = \frac{d\mu}{d\tmu}(x)=1=\frac{d\mu}{d\tmu}(x').
  \end{align*} 
  Now using~\eqref{eq:conjugateDefn} at~$x'$ and the strict monotonicity of $F_{x'}$ yield $$f(x')\in [f(x)-\Delta,f(x)+\Delta].$$ Hence $|f(x)-f(x')|\leq \Delta$ and then $\osc f \leq \sup_{x,x'} \Delta(x,x') \leq \sup_{y}\osc c(\cdot,y)$. This is (i) which in turn implies (ii). If $c$ is $\omega$-continuous, the above shows that $f$ is $\omega$-continuous on~$\X_{0}$. Thus $f$ can be uniquely extended to a $\omega$-continuous function on the closure of~$\X_{0}$, which is a superset of~$\spt\mu$ as $\mu(\X_{0})=1$. If $f$ is Lipschitz on $\spt\mu$, we can further use McShane's theorem and extend to a Lipschitz function on the whole space.
\end{proof}

\Cref{le:unifCont} showcases the idea that the potential~$f$ inherits regularity from the cost~$c$.\footnote{In contrast to EOT, this principle does not extend to higher-order regularity in general: the potential need not have a $C^{1}$ version even for a $C^{\infty}$ cost. For quadratic cost $c(x,y)=(x-y)^{2}$ on $\R$ with $\mu=\tilde\mu$ uniform on $[-1,1]$ and $\nu=\tilde\nu$ uniform on $[-2,-1]\cup[1,2]$, the derivative $f'$ exists and is continuous except at $x=0$, where $f'$ has a jump discontinuity. This follows from the formula for $f'$ given in \cite{GonzalezSanzNutz.24b}. In that sense, the Lipschitz result is optimal for general marginals.} This breaks down when $\mu\neq\tmu$. For instance, \cref{pr:zeroCost} shows in particular that $f=\frac{d\mu}{d\tmu}$ (up to additive constant) when $c\equiv 0$ and $\nu=\tnu$. Hence, in general, the regularity of the potentials depends on the regularity of the marginal densities. The next lemma exemplifies how~\eqref{eq:conjugateDefn} can still be used to obtain regularity results.

\begin{lemma}[Lipschitz potentials]\label{le:unifContDiffMarg}
  Let $\X$ be endowed with a metric. Let $c$ be bounded and let $x\mapsto c(x,y)$ be Lipschitz uniformly in~$y$. Moreover, let $\frac{d\mu}{d\tmu}$ and $(\frac{d\mu}{d\tmu})^{-1}$ be bounded and Lipschitz, and let $\frac{d\nu}{d\tnu}$ be bounded. Then $f$ admits a version that is bounded and Lipschitz 
  on~$\X$. 
\end{lemma}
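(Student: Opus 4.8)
The plan is to use the characterization behind~\eqref{eq:conjugateDefn}: for $x$ in a set $\X_{0}$ of full $\mu$-measure, $f(x)$ is the unique $t\in\R$ with
\[ F_{x}(t):=\int_{\Y}\big(t+g(y)-c(x,y)\big)_{+}\,\tnu(dy)=\tfrac{d\mu}{d\tmu}(x), \]
and $F_{x}$ is nondecreasing and strictly increasing where positive. I would first upgrade this to two-sided sup-bounds on $f$ and $g$, and then run a perturbation argument: replacing $x$ by a nearby $x'$ shifts $c(x,\cdot)$ by at most $L_{c}\,d(x,x')$ and $\tfrac{d\mu}{d\tmu}(x)$ by at most $L_{1}\,d(x,x')$ (with $L_{c},L_{1}$ the two Lipschitz constants), so $f(x')$ must be close to $f(x)$ as soon as $F_{x}$ is \emph{quantitatively} strictly increasing near $f(x)$---which is where the two-sided bounds on $\tfrac{d\mu}{d\tmu}$ come in.

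\emph{Step 1 (two-sided bounds).} If $\operatorname{ess\,inf}_{\mu}f=-\infty$, pick $x_{n}\in\X_{0}$ with $f(x_{n})<-n$; then $\big(f(x_{n})+g(y)-c(x_{n},y)\big)_{+}\to0$ for $\tnu$-a.e.\ $y$, dominated for large $n$ by $|g|+\|c\|_{\infty}\in L^{1}(\tnu)$ (using $g\in L^{1}(\nu)$, $\nu\sim\tnu$ and~\eqref{eq:refMeasureCond}), so $\tfrac{d\mu}{d\tmu}(x_{n})=F_{x_{n}}(f(x_{n}))\to0$, contradicting $\tfrac{d\mu}{d\tmu}\geq m_{0}>0$. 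Hence $f\geq-\kappa$ $\mu$-a.s.\ and, symmetrically, $g\geq-\kappa$ $\nu$-a.s.\ for some constant $\kappa$ (this is also \cref{le:lowerBounds}). Plugging $g\geq-\kappa$ into~\eqref{eq:conjugateDefn} gives $\tfrac{d\mu}{d\tmu}(x)\geq(f(x)-\kappa-\|c\|_{\infty})_{+}$, so $f\leq\|\tfrac{d\mu}{d\tmu}\|_{\infty}+\kappa+\|c\|_{\infty}$ $\mu$-a.s.; symmetrically, via~\eqref{eq:marginalEqnu} and $\|\tfrac{d\nu}{d\tnu}\|_{\infty}<\infty$, $g$ is bounded above. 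Set $B:=\|f\|_{L^{\infty}(\mu)}+\|g\|_{L^{\infty}(\nu)}+\|c\|_{\infty}<\infty$.

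\emph{Step 2 (Lipschitz estimate on $\X_{0}$; the main step).} Fix $x,x'\in\X_{0}$ with $f(x)\geq f(x')$, write $d:=d(x,x')$, $s:=f(x')+L_{c}d$, and $A':=\{y:f(x')+g(y)-c(x',y)>0\}$. If $f(x)\leq s$ there is nothing to prove. Otherwise, $|c(x,y)-c(x',y)|\leq L_{c}d$ gives $F_{x}(s)\geq F_{x'}(f(x'))=\tfrac{d\mu}{d\tmu}(x')$, whence $F_{x}(f(x))-F_{x}(s)\leq\tfrac{d\mu}{d\tmu}(x)-\tfrac{d\mu}{d\tmu}(x')\leq L_{1}d$. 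On the other hand, the elementary inequality $(a)_{+}-(b)_{+}\geq(a-b)\1_{\{b>0\}}$ together with $\{y:s+g(y)-c(x,y)>0\}\supseteq A'$ (again by $|c(x,y)-c(x',y)|\leq L_{c}d$) yields
\[ F_{x}(f(x))-F_{x}(s)\ \geq\ (f(x)-s)\,\tnu\big(\{y:s+g(y)-c(x,y)>0\}\big)\ \geq\ (f(x)-s)\,\tnu(A'), \]
and since $f(x')+g(y)-c(x',y)\leq B$ for $\tnu$-a.e.\ $y$, the identity $\tfrac{d\mu}{d\tmu}(x')=\int_{A'}(f(x')+g-c(x',\cdot))\,\tnu$ forces $\tnu(A')\geq\tfrac1B\tfrac{d\mu}{d\tmu}(x')\geq m_{0}/B$. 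Combining the three displays, $f(x)-f(x')=(f(x)-s)+L_{c}d\leq\big(L_{c}+(B/m_{0})L_{1}\big)\,d$. By symmetry, $f$ is $K$-Lipschitz on $\X_{0}$ with $K:=L_{c}+(B/m_{0})L_{1}$.

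\emph{Step 3 (globalization) and the obstacle.} Extend $f|_{\X_{0}}$ to a $K$-Lipschitz function on $\X$ via McShane's theorem and truncate at level $\|f\|_{L^{\infty}(\mu)}$; truncation is $1$-Lipschitz and does not change the values on $\X_{0}$, so one obtains a bounded, $K$-Lipschitz version of $f$ on all of $\X$. The crux is the active-set bound $\tnu(A')\geq m_{0}/B$ in Step~2: unlike in \cref{le:unifCont}, where $\mu=\tmu$ makes $F_{x}(f(x))\equiv1$ so that a plain comparison of $F_{x}$ at shifted arguments suffices, here $F_{x}(f(x))=\tfrac{d\mu}{d\tmu}(x)$ varies, and one must control simultaneously its oscillation (hence the Lipschitz hypothesis on $\tfrac{d\mu}{d\tmu}$) and the growth rate of $F_{x}$ near $f(x)$ (hence the two-sided bounds on $\tfrac{d\mu}{d\tmu}$, which via Step~1 also yield $B<\infty$).
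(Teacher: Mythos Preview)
Your proof is correct but takes a different route from the paper's. The paper multiplies the conjugate equation by $\xi(x):=(d\mu/d\tmu)^{-1}(x)$, defining $\tilde f=\xi f$, $\tilde g(x,y)=\xi(x)g(y)$, $\tilde c(x,y)=\xi(x)c(x,y)$; this makes the right-hand side identically~$1$, reducing to the setting of \cref{le:unifCont}, where a plain comparison of $F_{x'}$ at shifted arguments shows $\tilde f$ is Lipschitz, and then $f=\xi^{-1}\tilde f$ is a product of bounded Lipschitz functions. You instead keep the varying target $d\mu/d\tmu(x)$, control its oscillation via the Lipschitz constant $L_{1}$, and make the strict monotonicity of $F_{x}$ quantitative through the active-set bound $\tnu(A')\geq m_{0}/B$. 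The paper's reduction is slicker and recycles an existing lemma; your direct argument yields an explicit Lipschitz constant $K=L_{c}+(B/m_{0})L_{1}$ and, as a small bonus, actually uses only boundedness (not Lipschitz continuity) of $(d\mu/d\tmu)^{-1}$.
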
 

\begin{proof}
  For brevity, we write $\xi(x)=(\frac{d\mu}{d\tmu}(x))^{-1}$ and
  \[
     \tilde{f}(x) = \xi(x)f(x), \qquad \tilde{g}(x,y) = \xi(x)g(y),\qquad \tilde{c}(x,y)=\xi(x)c(x,y).
  \]
  As $c$ and $\frac{d\nu}{d\tnu}$ are bounded, \cref{le:lowerBounds} shows that $\|g\|_{\infty}<\infty$. 
  As~$\xi$ is Lipschitz and~$g$ is bounded, we see that $\tilde{g}(x,y)$ is Lipschitz in $x$, uniformly in $y$. Similarly, $x\mapsto \tilde{c}(x,y)$ is bounded and Lipschitz (uniformly in~$y$) as a product of bounded Lipschitz functions. Thus
  \[
    \Delta(x,x')
    := \sup_{y\in \Y}|\tilde{c}(x,y)-\tilde{c}(x',y)|+\sup_{y\in \Y}|\tilde{g}(x,y)-\tilde{g}(x',y)| \leq Ld_{\X}(x,x')
  \]  
  for some $L>0$, where $d_{\X}$ denotes the metric on $\X$. Multiplying~\eqref{eq:conjugateDefn} with $\xi(x)>0$ leads to
  \begin{align*}
    \int_{\Y} (\xi(x)t+\tilde{g}(x,y)-\tilde{c}(x,y))_{+}\,\,\tnu(dy)=1 \qforq t=f(x)
  \end{align*} 
  and then arguing  as in the proof of \cref{le:unifCont}  yields
  \[
    \tilde{f}(x')\in [\tilde{f}(x)-\Delta, \tilde{f}(x)+\Delta] \quad\qforq \Delta=\Delta(x,x');
  \]
  that is, 
  $|\tilde{f}(x)-\tilde{f}(x')|\leq Ld_{\X}(x,x')$. Thus $\tilde{f}$ is bounded Lipschitz. As $\xi^{-1}=\frac{d\mu}{d\tmu}$ is also bounded Lipschitz, the product $f=\xi^{-1}\tilde{f}$ is again bounded Lipschitz. It follows that $f$ admits a Lipschitz version on~$\X_{0}$ and hence on~$\spt\mu$. That version can again be extended to a Lipschitz function on~$\X$ by McShane's theorem.
\end{proof}

\section{Multiplicity of Potentials}\label{se:multiplicity}

In this section we study the multiplicity of the potentials $(f,g)$. %
There is always a trivial non-uniqueness, as $(f+\alpha,g-\alpha)$ have the same sum $(f+\alpha)\oplus(g-\alpha)=f\oplus g$ for any $\alpha\in\R$. The main question is whether the potentials are unique up to this additive constant, or if there are \emph{further} degrees of freedom in choosing the potentials.

\subsection{Discrete Case}\label{se:discrete}

We shall describe the full family of potentials based on the geometry of the support $\spt \pi_{*}$ of the optimal coupling. Uniqueness typically fails. Let us first study a minimal example to obtain some guidance.

\begin{example}\label{ex:discrete}
  Let $\X=\Y=\{0,1\}$ and $\mu=\tmu=\nu=\tnu=\frac12(\delta_{0}+\delta_{1})$. Let $c(x,y)=(2+\gamma)\1_{x\neq y}$; we first consider the case $\gamma\geq0$. We claim that the optimal density is
  \begin{align*}
    Z_{*}(x,y)=2\1_{x=y},
  \end{align*}
  meaning that the optimal coupling~$\pi_{*}$ is the uniform measure on the diagonal, and that $(f,g)$ are potentials if and only if
  \begin{align}\label{eq:discrExPot}
  \begin{cases}
    &f(0) = \alpha, \quad g(0) = 2-\alpha, \\
    &f(1) = \beta, \quad g(1) = 2-\beta\\
    &\mbox{for some $\alpha,\beta\in\R$ with $|\alpha-\beta|\leq \gamma$.}  
  \end{cases}  
  \end{align}   
  
  Indeed, for $(f,g)$ as in~\eqref{eq:discrExPot}, we have  $(f\oplus g)(x,y)=2$ when $x=y$, whereas $(f\oplus g)(0,1)=2+\alpha - \beta$ and $(f\oplus g)(1,0)=2-\alpha + \beta$. In particular, $(f\oplus g)(x,y)\leq 2+\gamma$ when $x\neq y$. As a result, $(f\oplus g-c)_{+}=2\1_{x=y}=:Z\in\cZ$. Now \cref{th:main} shows that $Z$ is the primal optimizer and that $(f,g)$ are potentials. Conversely, let $(f,g)$ be potentials. Define $\alpha:=f(0)$ and $\beta:=f(1)$. Then $(f\oplus g-c)_{+}=2\1_{x=y}$ implies that~$g$ and $\alpha,\beta$ must satisfy the conditions in~\eqref{eq:discrExPot}. In summary, \eqref{eq:discrExPot} describes the family of all potentials when $\gamma\geq0$.
  
  Next, consider $\gamma\in [-2,0)$, or equivalently $c(x,y)= \eta\1_{x\neq y}$ with $\eta\in [0,2)$. Direct calculation shows that $Z_{*}=(1+\eta/2)\1_{x=y} + (1-\eta/2)\1_{x\neq y}$ with constant potentials $(f,g)\equiv(\alpha,1+\eta/2-\alpha)$ for any $\alpha\in\R$. Here the optimal support is the full space, $\spt \pi_{*}=\X\times\Y$, and correspondingly, the identity 
  $Z_{*}=(f\oplus g-c)_{+}=f\oplus g-c$ determines $f\oplus g$ everywhere. 
Let us summarize:
  \begin{enumerate}
  \item For $\gamma>0$, the potentials are non-unique beyond the trivial shift by a constant. A second degree of freedom arises because the two points $(0,0)$ and $(1,1)$ of the support $\spt \pi_{*}$ do not overlap in terms of $\X$ or $\Y$ coordinates. In the language introduced below, the singletons $\{(0,0)\}$ and $\{(1,1)\}$ are the two components of $\spt \pi_{*}$, and they are related to the fact that the potentials are given by a two-parameter family (indexed by $\alpha, \beta$). 
  
  \item For $-2\leq \gamma<0$, we have $\spt \pi_{*}=\X\times\Y$ which has a single component. The potentials span a one-parameter family $(f+\alpha,g-\alpha)_{\alpha\in\R}$; i.e., are as unique as can be.
  
  \item In the boundary case $\gamma=0$, the support still has two components as in~(i), but the two-parameter family degenerates to a one-parameter family since the constraint $|\alpha-\beta|\leq0$ pins down the second parameter to a single value. 
\end{enumerate} 
\end{example}

The following notion of connectedness is the key to generalizing the above observations to arbitrary discrete problems. It was first introduced by~\cite{BorweinLewis.92} in a different context. %

\begin{definition}\label{de:connected}
	Let $E\subset \X\times\Y$ be any subset. Two points $(x,y), (x',y') \in E$ are \emph{connected}, denoted $(x,y) \sim (x',y')$, if there exist $k\in \N_{0}$ and $(x_i,y_{i})_{i=1}^{k}\in E^{k}$ such that the points
\begin{equation}\label{eq:path}
	(x,y), (x_1, y), (x_1, y_1), (x_2, y_1), \dots, (x_{k},y_{k}), (x', y_{k}), (x',y')
\end{equation}
all belong to~$E$. In that case, $(x_i,y_{i})_{i=1}^{k}$ is called a \emph{path} (in~$E$) from $(x,y)$ to $(x',y')$.
The relation~$\sim$ is an equivalence relation on~$E$. The corresponding equivalence classes~$C$ are called the \emph{components} of~$E$, and we denote by $\cC$ the collection of all components.  A set $B\subset E$ is connected (in~$E$) if any two points in~$B$ are connected; thus, the components are the maximal connected subsets of~$E$.
\end{definition}

\begin{figure}[tbh]
    \centering
    \includegraphics[width=.3\textwidth]{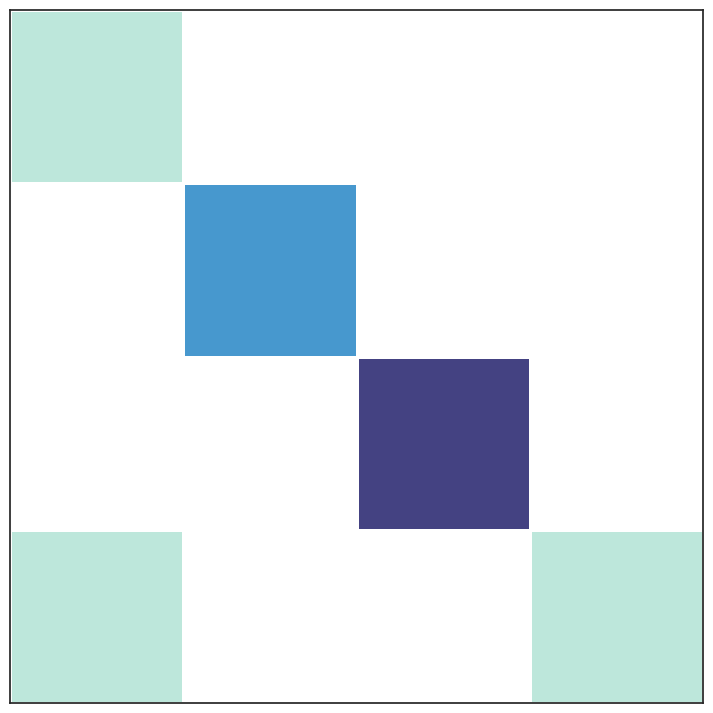}
    \caption{Illustration of a subset $E$ (colored area) of the square $\X\times\Y=[0,1]^{2}$ with three components (color coding).}
    \label{fig:components}
\end{figure}%

For the list~\eqref{eq:path}, the crucial property is that only one coordinate is changed in each step. In our notation, the first coordinate changes first, but because a point can be repeated in the list, this entails no loss of generality.

The present notion of connectedness is graph-theoretic and quite different from the topological one. For instance, two connected subsets are connected to one another as soon as they have points with a common $\X$-coordinate (or $\Y$-coordinate); cf.\ \cref{fig:components}. 

\begin{remark}\label{rk:partition}
  Denote by $\proj_{\X}$ the canonical projection $(x,y)\mapsto x$.
It follows from \cref{de:connected} that for any $(x,y)\in E$, there is exactly one component~$C$ with $x\in\proj_{\X}(C)$. That is, $\{\proj_{\X}(C)\}_{C\in\cC}$ is a partition of~$\proj_{\X}(E)$. The analogue holds for~$\Y$.
\end{remark} 

\begin{lemma}\label{le:conn}
	Let $C\subset E$ be connected. Let $f,f': \proj_{\X}(C)\to\R$ and $g,g': \proj_{\Y}(C)\to\R$ be functions such that $f\oplus g=f'\oplus g'$ on~$C$. Then there exists $\alpha\in\R$ such that $f=f'+\alpha$ on $\proj_{\X}(C)$ and $g=g'-\alpha$ on $\proj_{\Y}(C)$. In particular, the set of all functions $f',g'$ such that $f\oplus g=f'\oplus g'$ on~$C$, is the one-parameter family $(f+\alpha,g-\alpha)_{\alpha\in\R}$.
\end{lemma}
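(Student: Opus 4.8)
The claim is that on a connected set $C$, the decomposition $f \oplus g$ determines $f$ and $g$ up to an additive constant. The plan is to show that if $f \oplus g = f' \oplus g'$ on $C$, then the difference functions $h := f - f'$ on $\proj_{\X}(C)$ and $k := g' - g$ on $\proj_{\Y}(C)$ satisfy $h(x) = k(y)$ whenever $(x,y) \in C$, and then to propagate this single value along a connecting path. Concretely, fix a basepoint $(x_0, y_0) \in C$ and set $\alpha := h(x_0)$; since $(x_0,y_0) \in C$, the identity $f(x_0) + g(y_0) = f'(x_0) + g'(y_0)$ rearranges to $h(x_0) = k(y_0)$, so also $k(y_0) = \alpha$.

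The core of the argument is the path-propagation. Let $x \in \proj_{\X}(C)$ be arbitrary; pick $y$ with $(x,y) \in C$ (possible by definition of $\proj_{\X}(C)$), and let $(x_i, y_i)_{i=1}^k$ be a path in $C$ from $(x_0, y_0)$ to $(x,y)$, so that all points in the list \eqref{eq:path} lie in $C$. Along consecutive entries of that list, exactly one coordinate changes at a time, and at each point $(a,b) \in C$ we have $h(a) = k(b)$ from the rearranged identity $f(a) + g(b) = f'(a) + g'(b)$. Walking the list: from $(x_0, y_0)$ (value $\alpha$ on both sides) to $(x_1, y_0)$ we keep $y_0$, so $h(x_1) = k(y_0) = \alpha$; then to $(x_1, y_1)$ we keep $x_1$, so $k(y_1) = h(x_1) = \alpha$; continuing alternately, every $h(x_i)$ and $k(y_i)$ equals $\alpha$, and finally $h(x) = k(y_k) = \alpha$ (and $k(y) = \alpha$ as well). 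The edge case $k = 0$ just gives the list $(x_0,y_0),(x,y_0),(x,y)$ and the same conclusion. Hence $h \equiv \alpha$ on $\proj_{\X}(C)$ and $k \equiv \alpha$ on $\proj_{\Y}(C)$, which is exactly $f = f' + \alpha$ and $g = g' - \alpha$.

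For the final sentence of the lemma, note that conversely any pair $(f + \alpha, g - \alpha)$ manifestly satisfies $(f+\alpha) \oplus (g - \alpha) = f \oplus g$ on $C$, so the solution set is precisely this one-parameter family; and this family is genuinely one-parameter (distinct $\alpha$ give distinct $f'$) because $\proj_{\X}(C) \neq \emptyset$.

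There is no real obstacle here — the only thing to be careful about is that the path definition repeats coordinates, so one must read the list \eqref{eq:path} as a walk in which each step changes a single coordinate while the other is carried over from a point already known to lie in $C$; this is what lets the constant $\alpha$ be transported without interruption. The degenerate cases ($k=0$, or $\proj_{\X}(C)$ a singleton) are covered by the same bookkeeping.
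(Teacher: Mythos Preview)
Your proof is correct and takes essentially the same approach as the paper: fix a basepoint, set $\alpha$ to be the difference there, and propagate the constant along a connecting path using the identity $f\oplus g=f'\oplus g'$ one coordinate-change at a time. Your introduction of the auxiliary functions $h=f-f'$ and $k=g'-g$ makes the bookkeeping slightly tidier than the paper's direct induction, but the argument is the same.
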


\begin{proof}
  Fix $(x_{0},y_{0})\in C$ and define $\alpha:=f(x_{0})-f'(x_{0})$. 
  Let $(x',y')\in C$. By connectedness, there exists a path $$(x_{0},y_{0}), (x_1, y_{0}), (x_1, y_1), (x_2, y_1), \dots, (x_{k},y_{k}), (x', y_{k}), (x',y')$$ in~$E$. 
  We have $f(x_{0})=f'(x_{0})+\alpha$. As $f\oplus g=f'\oplus g'$ holds at $(x_{0},y_{0})$, it follows that $g(y_{0})=g'(y_{0})-\alpha$. Similarly, $f\oplus g=f'\oplus g'$ holds at $(x_{1},y_{0})$, hence it follows that $f(x_{1})=f'(x_{1})+\alpha$. Continuing inductively, we obtain that $f(x')=f'(x')+\alpha$ and $g(y')=g'(y')-\alpha$.
\end{proof} 

We can now state the main result of this subsection.

\begin{theorem}\label{th:multiplicityDiscrete}
  Let $\mu,\nu$ be finitely supported. Without loss of generality, $\X=\spt \mu$ and $\Y=\spt \nu$. 
  Let $C_{1},\dots,C_{N}$ be the components (cf.\ \cref{de:connected}) of the optimal support $\spt \pi_{*}$ and fix arbitrary potentials~$(f,g)$. The family of all potentials is given by
\begin{align*}
  \left(f + \sum_{i=1}^{N}\alpha_{i}\1_{\proj_{\X}(C_{i})}, ~ g - \sum_{i=1}^{N}\alpha_{i}\1_{\proj_{\Y}(C_{i})}\right), \qquad (\alpha_{1},\cdots,\alpha_{N})\in T
\end{align*} 
where $T\subset\R^{N}$ is the closed convex polytope 
\begin{align*}
  T=\{(\alpha_{1},\cdots,\alpha_{N})\in\R^{N}:\, \alpha_{i}-\alpha_{j} \leq a_{ij}\}
\end{align*} 
containing the origin, with $a_{ij}\in\R_{+}$ given by 
\begin{align}\label{eq:polytopeConstraints}
 a_{ij}:=\inf_{(x,y)\in [\proj_{\X}(C_{i})\times \proj_{\Y}(C_{j})]\setminus \spt \pi_{*}} c(x,y)-f(x)-g(y)\geq0.
\end{align} 
\end{theorem}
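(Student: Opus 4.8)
The plan is to reduce everything to the structural fact from Theorem \ref{th:main}(iii) that a pair $(f',g')$ is a family of potentials if and only if $(f'\oplus g' - c)_+ = Z_*$ $P$-a.s., together with the combinatorial Lemma \ref{le:conn}. First I would record that on $\spt\pi_*$ one has $Z_* = f'\oplus g' - c > 0$, so \emph{any} two potential pairs $(f,g)$ and $(f',g')$ satisfy $f\oplus g = f'\oplus g'$ on $\spt\pi_*$; applying Lemma \ref{le:conn} componentwise (each $C_i$ is connected, so the constant is component-specific), there are reals $\alpha_1,\dots,\alpha_N$ with $f' = f + \sum_i \alpha_i \1_{\proj_\X(C_i)}$ and $g' = g - \sum_i \alpha_i \1_{\proj_\Y(C_i)}$; by Remark \ref{rk:partition} the sets $\proj_\X(C_i)$ partition $\X = \spt\mu$ and the $\proj_\Y(C_i)$ partition $\Y = \spt\nu$, so this representation is unambiguous and every potential pair has this form for a unique $\alpha\in\R^N$. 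This shows the family of potentials is parametrized by a subset of $\R^N$; it remains to identify that subset as $T$.

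Next I would characterize which $\alpha\in\R^N$ actually give potentials. Writing $f'\oplus g'$ in terms of $f\oplus g$ and $\alpha$: for $x\in\proj_\X(C_i)$ and $y\in\proj_\Y(C_j)$ we get $(f'\oplus g')(x,y) = (f\oplus g)(x,y) + \alpha_i - \alpha_j$. The pair $(f',g')$ is a family of potentials iff $(f'\oplus g' - c)_+ = Z_*$. On $\spt\pi_*$ this holds automatically: if $(x,y)\in C_i$ then $x\in\proj_\X(C_i)$ and $y\in\proj_\Y(C_i)$ (same component), so $\alpha_i - \alpha_i = 0$ and the value is unchanged, and it stays positive. (One should check a point $(x,y)\in\spt\pi_*$ lies in a single component and its two coordinates belong to the \emph{same} $C_i$ — immediate from the definition, since $(x,y)$ itself is in that component.) Off the support, we need $(f'\oplus g' - c)(x,y) \le 0$, i.e. $(f\oplus g)(x,y) + \alpha_i - \alpha_j - c(x,y) \le 0$, i.e. $\alpha_i - \alpha_j \le c(x,y) - f(x) - g(y)$ for every $(x,y)\in[\proj_\X(C_i)\times\proj_\Y(C_j)]\setminus\spt\pi_*$. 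Taking the infimum over such $(x,y)$ gives exactly the constraint $\alpha_i - \alpha_j \le a_{ij}$, and conversely these constraints suffice. (When the index set is empty the constraint is vacuous, consistent with $a_{ij} = \inf\emptyset = +\infty$.) This proves the family is precisely $\{f + \sum_i\alpha_i\1_{\proj_\X(C_i)},\, g - \sum_i\alpha_i\1_{\proj_\Y(C_i)}\}$ over $\alpha\in T$.

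Finally I would dispatch the bookkeeping claims about $T$: it is a finite intersection of closed half-spaces $\{\alpha_i - \alpha_j \le a_{ij}\}$, hence a closed convex polyhedron, and it is a polytope (bounded) because modulo the diagonal line $\R(1,\dots,1)$ — which always lies in $T$ and corresponds to the trivial shift — the constraints bound the differences $\alpha_i - \alpha_j$ from above, and by connecting any two components $C_i, C_j$ through a chain of components that share an $\X$- or $\Y$-coordinate one gets two-sided bounds; alternatively one simply notes $T/\R(1,\dots,1)$ is bounded. The origin lies in $T$ since $a_{ij}\ge 0$ (taking $\alpha = 0$). The nonnegativity $a_{ij}\ge 0$ itself follows because $(f\oplus g - c)_+ = Z_* = 0$ off $\spt\pi_*$, so $c(x,y) - f(x) - g(y) \ge 0$ there, and I would remark that this already uses that $f\oplus g - c \le 0$ off the support (which is part of $(f\oplus g - c)_+ = Z_*$).

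The main obstacle, modest as it is, is the careful handling of the off-support constraint: one must verify that the pointwise inequalities $\alpha_i - \alpha_j \le c(x,y) - f(x) - g(y)$ over a finite set are equivalent to the single inequality with the infimum (trivial in the finite/discrete case, since the infimum is attained, but worth stating), and one must correctly match up which component indices $i,j$ govern a given off-support cell $\proj_\X(C_i)\times\proj_\Y(C_j)$ — in particular noticing that a cell with $i\ne j$ may still intersect $\spt\pi_*$ only in a proper subset, so the inf in \eqref{eq:polytopeConstraints} is genuinely over a set that can be a strict subset of the cell. Everything else is a direct combination of Theorem \ref{th:main}(iii), Lemma \ref{le:conn}, and Remark \ref{rk:partition}.
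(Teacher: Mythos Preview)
Your proof is correct and follows essentially the same approach as the paper's: characterize potentials via $(f'\oplus g'-c)_+=Z_*$, use \cref{le:conn} on each component to get the $\alpha_i$-parametrization on $\spt\pi_*$, then translate the off-support condition $f'\oplus g'\le c$ into the linear constraints defining~$T$. Two minor remarks: first, for $i\ne j$ the cell $\proj_\X(C_i)\times\proj_\Y(C_j)$ never meets $\spt\pi_*$ (by \cref{rk:partition} any point of $\spt\pi_*$ has both coordinates in the projections of the \emph{same} component), so your worry about proper subsets there is unnecessary and each such $a_{ij}$ is a finite real number; second, $T$ itself is not bounded---it contains the full diagonal $\R(1,\dots,1)$, as you yourself note---so the paper's phrase ``closed convex polytope'' should be read as ``polyhedron,'' and no boundedness argument is needed (your observation that $T/\R(1,\dots,1)$ is bounded is correct but not part of the claim).
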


\begin{proof}
  By definition,  $(f',g')$ are potentials iff $(f'\oplus g'-c)_{+}=(f\oplus g-c)_{+}$ on $\spt \mu \times \spt\nu$ (which is the whole space $\X\times\Y$ by our assumption). This identity amounts to two requirements:
  
(a)  As $\spt \pi_{*}$ is the subset where the density is strictly positive, we see that $(f'\oplus g'-c)_{+}=(f\oplus g-c)_{+}$ on  $\spt \pi_{*}$ is equivalent to $f'\oplus g'=f\oplus g$ on  $\spt \pi_{*}$. Using \cref{le:conn} and \cref{rk:partition}, the family of all functions $(f',g')$ satisfying $f'\oplus g'=f\oplus g$ on $\spt \pi_{*}$ is
\begin{align*}
  \left(f + \sum_{i=1}^{N}\alpha_{i}\1_{\proj_{\X}(C_{i})}, ~ g - \sum_{i=1}^{N}\alpha_{i}\1_{\proj_{\Y}(C_{i})}\right), \qquad \alpha_{i}\in \R, \; 1\leq i \leq N.
\end{align*} 

(b) The complement of $\spt \pi_{*}$ is the set where the density is zero, hence on that set, $(f'\oplus g'-c)_{+}=(f\oplus g-c)_{+}$ is equivalent to $f'\oplus g'\leq c$. As the sets $\proj_{\X}(C_{i})\times \proj_{\Y}(C_{j})$ form a partition of $\X\times\Y$, $f'\oplus g'\leq c$ with the displayed form for $f'\oplus g'$ is further equivalent to $(f+\alpha_{i})\oplus (g-\alpha_{j})\leq c$ on $[\proj_{\X}(C_{i})\times \proj_{\Y}(C_{j})]\setminus \spt \pi_{*}$ for all $1\leq i,j,\leq N$. This is, in turn, equivalent to $\alpha_{i}-\alpha_{j} \leq a_{ij}$ where $a_{ij}$ is given by~\eqref{eq:polytopeConstraints}. We have $a_{ij}\geq0$ because $(f\oplus g-c)_{+}>0$ on  $\spt \pi_{*}$.
\end{proof}

\Cref{th:multiplicityDiscrete} shows that the potentials form at most an $N$-parameter family, where $N$ is the number of components of the optimal support. We expect in typical cases that they form a nondegenerate $N$-parameter family. However, the boundary case $\gamma=0$ in \cref{ex:discrete} illustrates that the polytope can degenerate to a subset of dimension $<N$.

In the continuous case, on the other hand, it is rather intuitive that the constraint $f'\oplus g'\leq c$ outside $\spt\pi_{*}$ becomes a severe restriction: if the involved functions are continuous, then as $f'\oplus g'= c$ on the boundary of $\spt \pi_{*}$, $|f'\oplus g'- c|$ can be arbitrarily small close to the boundary, leaving no space to vary the~$\alpha_{i}$ (except for the trivial shift). This may serve as an intuition for the situation in \cref{se:contCase} below, where potentials are unique up to the unavoidable additive constant.

\subsubsection{Counterexample for Self-Transport}

In \cref{ex:discrete}, the potentials are unique if we impose the constraint $f=g$. However, the following example shows that potentials need not be unique even if the constraint $f=g$ is imposed in a symmetric setting with $(\X,\mu,\tmu)=(\Y,\nu, \tnu)$ and $c(x,y)=c(y,x)$.

\begin{example}\label{ex:discreteCounter}
  Let $\X=\Y=\{0,1\}$ and $\mu=\tmu=\nu=\tnu=\frac12(\delta_{0}+\delta_{1})$. Let $c(x,y)=(2+\gamma)\1_{x= y}$ where $\gamma>0$. This setting is similar to \cref{ex:discrete}, but the roles of the diagonal and the off-diagonal are interchanged. In analogy to \cref{ex:discrete},
the optimal density is
  $
    Z_{*}(x,y)=2\1_{x\neq y},
  $
  meaning that the optimal coupling~$\pi_{*}$ is the uniform measure on the \emph{off}-diagonal, and $(f,g)$ are potentials if and only if
  \begin{align*}%
  \begin{cases}
    &f(0) = \alpha, \quad g(0) = 2-\beta, \\
    &f(1) = \beta, \quad g(1) = 2-\alpha,\\
    &\mbox{for some $\alpha,\beta\in\R$ with $|\alpha-\beta|\leq \gamma$.}  
  \end{cases}  
  \end{align*}   
  Next, we impose the symmetry constraint $f=g$. This is equivalent to the two equations $\alpha = 2-\beta$ and $\beta = 2-\alpha$, which are however redundant. Then $|\alpha-\beta| \leq \gamma$ is equivalent to $\alpha \in [1-\gamma/2,1+\gamma/2]$ and we conclude that the family of all symmetric potentials $(f,g)$ is
  \begin{align*}%
  \begin{cases}
    &f(0) = g(0) = \alpha, \\
    &f(1) = g(1) = 2-\alpha,\\
    &\mbox{for some $\alpha \in [1-\gamma/2,1+\gamma/2]$.}  
  \end{cases}  
  \end{align*}
  In particular, $f$ is not unique.
\end{example}

\subsection{Continuous Case}\label{se:contCase}

Denote by $\cL^{d}$ the Lebesgue measure on $\R^{d}$. We show that in a regular (yet very standard) setting, the potentials $(f,g)$ are a.s.\ unique up to an additive constant.

\begin{theorem}\label{th:dualUniqueness}
  Let $\X=\Y=\R^{d}$ and $\mu\sim\cL^{d}$ on $\spt\mu$, and assume that 
  $\Int\spt\mu$ is connected.\footnote{This refers to the usual topological connectedness, not \cref{de:connected}.} Moreover, let $c$ be Lipschitz and differentiable on a neighborhood of~$\spt\mu\times\spt\nu$. Let either\footnote{The purpose of this condition is to ensure that any potential is Lipschitz. We could instead assume the latter directly, or assert that uniqueness holds within the class of Lipschitz potentials.}
  \begin{enumerate}
  \item $(\tmu,\tnu)=(\mu,\nu)$ or
  \item $\frac{d\mu}{d\tmu}, (\frac{d\mu}{d\tmu})^{-1}, \frac{d\nu}{d\tnu}, (\frac{d\nu}{d\tnu})^{-1}$ be bounded and Lipschitz, and $c$ be bounded.
  \end{enumerate} 
  Then the potential~$f$ is uniquely determined $\mu$-a.s., up to an additive constant.
\end{theorem}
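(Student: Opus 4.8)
The plan is to reduce uniqueness to a connectedness argument in the spirit of \cref{le:conn}, but now in the continuous setting where the components of $\spt\pi_*$ interact through their topological structure rather than combinatorially. First I would establish that any potential $f$ (and correspondingly $g$) admits a Lipschitz version: under assumption~(i) this is \cref{le:unifCont}(iii) (the Lipschitz part, via McShane), and under assumption~(ii) it is exactly \cref{le:unifContDiffMarg}. So from now on I work with Lipschitz representatives $f,g$ defined on all of $\R^d$, and I may assume both $f$ and $g$ are differentiable $\cL^d$-a.e. Suppose $(f,g)$ and $(f',g')$ are two pairs of potentials; set $u:=f-f'$ and $v:=g-g'$, both Lipschitz. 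The goal is to show $u$ is constant $\mu$-a.s., i.e.\ constant on $\spt\mu$, equivalently (by $\mu\sim\cL^d$ on $\spt\mu$ and connectedness of $\Int\spt\mu$) constant on $\Int\spt\mu$.

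The key pointwise identity comes from the common optimal density: $(f\oplus g-c)_+=Z_*=(f'\oplus g'-c)_+$ on $\spt\pi_*$, hence $u(x)+v(y)=0$ for every $(x,y)\in\spt\pi_*$ (more precisely, for a.e.\ such point, but by continuity on the closed set $\spt\pi_*$). Now fix a point $x_0\in\Int\spt\mu$ at which $f$ is differentiable. By \eqref{eq:marginalEqmu}, $\int_\Y (f(x_0)+g(y)-c(x_0,y))_+\,\tnu(dy)=\frac{d\mu}{d\tmu}(x_0)>0$, so the fiber $Y_{x_0}:=\{y:(x_0,y)\in\spt\pi_*\}$ has positive $\tnu$-measure; in fact the set $\{y:f(x_0)+g(y)-c(x_0,y)>0\}$ is (up to null sets) this fiber and it is open-ish. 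For such $y$ we have $u(x_0)=-v(y)$, so $v$ is constant (equal to $-u(x_0)$) on $Y_{x_0}$. The plan is to show this forces the gradient relation $\nabla u(x_0)=0$: differentiating the defining relation $F_{x}(f(x))=\frac{d\mu}{d\tmu}(x)$ for both potentials at $x_0$, and using that on the active fiber $Y_{x_0}$ the integrand $(t+g(y)-c(x_0,y))_+$ is differentiable in $(t,x)$ with the contributions from $g$ versus $g'$ cancelling because $v\equiv$ const there — this yields a linear equation in $\nabla u(x_0)$ whose only solution is $0$. A cleaner route: differentiate $u(x)+v(y)=0$ along the support. Since $\spt\pi_*$ has full $\mu$-fibers of positive measure, for a.e.\ $x$ near $x_0$ one can select a fiber point $y$ and differentiate in $x$ to get $\nabla u(x)=0$, provided one controls how $y$ may be chosen measurably — here the implicit-function / monotonicity structure of $F_x$ (strictly increasing where positive) gives a measurable selection.

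Once $\nabla u=0$ $\cL^d$-a.e.\ on $\Int\spt\mu$, Lipschitzness plus connectedness of $\Int\spt\mu$ gives that $u$ is constant there, hence $f=f'+\alpha$ $\mu$-a.s.\ for a single constant $\alpha$, which is the claim; the corresponding statement for $g$ follows from \cref{le:gGivenf}.

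I expect the main obstacle to be the differentiation step — rigorously passing from the a.e.-on-$\spt\pi_*$ identity $u(x)+v(y)=0$ to $\nabla u(x)=0$ for a.e.\ $x$. The difficulty is twofold: (1) $\spt\pi_*$ is merely a closed set with no a priori smoothness, so one cannot naively parametrize a fiber $y=y(x)$ and apply the chain rule; one must instead work with the density relation \eqref{eq:marginalEqmu}, where the truncation $(\,\cdot\,)_+$ is only piecewise smooth, and justify differentiating under the integral sign (the boundary term $\{y: f(x)+g(y)=c(x,y)\}$ must be shown to be $\tnu$-null for a.e.\ $x$, which follows from $g-c(x,\cdot)$ being non-degenerate, e.g.\ using $\nu\ll\cL^d$-type arguments or the Lipschitz regularity and Sard-type reasoning). (2) One needs the cancellation: the $x$-derivative of $\int (f(x)+g(y)-c(x,y))_+\,\tnu(dy)$ equals $\int_{Y_x}(\nabla f(x)-\nabla_x c(x,y))\,\tnu(dy)$, and subtracting the analogous identity for $(f',g')$ — noting $g$ and $g'$ differ by the constant $-u(x_0)$ on $Y_x$, which does not affect $Y_x$ itself — leaves $\nabla f(x)\,\tnu(Y_x)=\nabla f'(x)\,\tnu(Y_x)$ with $\tnu(Y_x)>0$, hence $\nabla u(x)=0$. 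Making this chain of ``for a.e.\ $x$'' statements airtight, including the measurable dependence of $Y_x$ and the interchange of limit and integral, is the technical heart of the argument; everything else (the connectedness-to-constant passage, the reduction via \cref{le:gGivenf}) is routine.
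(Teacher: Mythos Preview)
Your overall strategy---pass to Lipschitz representatives via \cref{le:unifCont}/\cref{le:unifContDiffMarg}, show $\nabla u=0$ $\cL^{d}$-a.e.\ on $\Int\spt\mu$ for $u:=f-f'$, then invoke connectedness and \cref{le:gGivenf}---matches the paper's. What you are missing is the one observation that makes the differentiation step immediate: the Lipschitz version $Z:=(f\oplus g-c)_{+}$ of the optimal density is continuous, so $E:=\{Z>0\}$ is \emph{open}. Given $x_{0}\in\proj_{\X}E\cap\dom\nabla f$ (a set of full $\cL^{d}$-measure in $\spt\mu$), pick $y_{0}$ with $(x_{0},y_{0})\in E$; then an entire ball $B_{r}(x_{0},y_{0})\subset E$ carries the identity $Z(x,y)=f(x)+g(y)-c(x,y)$ with the positive part inactive. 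Freezing $y=y_{0}$ and differentiating in~$x$ yields $\nabla f(x_{0})=\nabla_{x}Z(x_{0},y_{0})+\nabla_{x}c(x_{0},y_{0})$, which is determined by the unique optimal density and by~$c$. In your notation this reads $u(x)+v(y_{0})=0$ for every $x\in B_{r}(x_{0})$, so $u$ is locally constant and $\nabla u(x_{0})=0$. No measurable selection $y=y(x)$ is needed; $y_{0}$ simply stays fixed while $x$ moves.

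Without this openness observation, both of your proposed routes face genuine obstacles, not mere technicalities. Differentiating \eqref{eq:marginalEqmu} in~$x$ requires the kink set $\{y:f(x)+g(y)=c(x,y)\}$ to be $\tnu$-null for a.e.~$x$; you suggest ``$\nu\ll\cL^{d}$-type arguments or Sard-type reasoning'', but the theorem expressly covers the semi-discrete case where $\nu$ is purely atomic, and level sets of merely Lipschitz functions can have positive Lebesgue measure, so neither rescue is available in general. Your ``cleaner route'' is the right instinct, yet as written it still insists on tracking~$y$ with~$x$ through a selection and a chain rule---precisely the mechanism that the openness of~$E$ renders unnecessary and that dissolves every obstacle you flagged.
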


We recall from \cref{le:gGivenf} that~$g$ is then also uniquely determined $\nu$-a.s., up to constant. In \cref{th:dualUniqueness}, the crucial connectedness assumption is imposed on~$\mu$, whereas~$\nu$ can be quite general. In particular, the result applies in the setting of semi-discrete optimal transport where, typically, $\mu$ is given by a nice ``population'' density and $\nu$ is an empirical measure. The condition on~$\mu$ is satisfied, for instance, if $\mu$ admits a density wrt.~$\cL^{d}$ and $\{d\mu/d\cL^{d}>0\}$ is convex (which implies that $\Int\spt\mu$ is connected).

\begin{proof}[Proof of \cref{th:dualUniqueness}]
  Let $(f,g)$ be potentials. We can extend $c$ to a Lipschitz function on $\X\times\Y$. Recall from \cref{le:unifCont} (or \cref{le:unifContDiffMarg}) that $f,g$ admit versions that are defined everywhere on $\R^{d}$ and Lipschitz. In the remainder of the proof, $f$ and $g$ denote those versions. Note that $Z:=(f\oplus g - c)_{+}$ is a Lipschitz version of the density of the optimal coupling~$\pi_{*}$. In particular, the set $E=\{Z>0\}$ is open and satisfies $\pi_{*}(E)=1$. 
It is then not hard to check that the open set $\proj_{\X}E$ satisfies 
  \begin{align}\label{eq:supportProj}
    \mu(\proj_{\X}E)=1 \qandq \spt \mu = \overline{\proj_{\X}E}.
  \end{align} 

  Let $\dom \nabla f$ denote the set where $f$ is differentiable. By Rademacher's theorem, the complement of $\dom \nabla f$ is $\cL^{d}$-null. 
  On the other hand, it follows from~\eqref{eq:supportProj} and $\mu\sim\cL^{d}$ on $\spt\mu$ that $\cL^{d}(\spt\mu\setminus \proj_{\X}E)=0$. In summary, $\Lambda:=\dom\nabla f\cap\proj_{\X}E\subset\spt\mu$ has full $\cL^{d}$-measure within $\spt\mu$
  
  Next, we check that $\nabla f$ is uniquely determined on~$\Lambda$. Indeed, let $x_{0}\in\Lambda$, then $(x_{0},y_{0})\in E$ for some $y_{0}$. We have
  \begin{align*}
    Z(x,y) = f(x)+ g(y) - c(x,y), \qquad (x,y)\in B_{r}(x_{0},y_{0})
  \end{align*} 
  for small $r>0$, as~$E$ is open. (Here $B_{r}(z)$ denotes the open ball of radius~$r$ around~$z$.) Differentiation thus yields that  $\nabla f(x_{0}) = \nabla_{x} Z(x_{0},y_{0}) + \nabla_{x} c(x_{0},y_{0})$. The right-hand side is uniquely determined. In summary, we have shown that 
  $f$ is a Lipschitz function with $\nabla f$ uniquely determined $\cL^{d}$-a.e.\ on the open and connected set $\Int\spt\mu$. This implies that $f$ is uniquely determined up to additive constant  on $\Int\spt\mu$ (see, e.g., \cite[Formula~2]{Qi.89}). As $\proj_{\X}E$ is open, \eqref{eq:supportProj} implies $\mu(\Int\spt\mu)=1$, completing the proof.
\end{proof} 

\begin{remark}\label{rk:selfCont}
  In \cref{th:dualUniqueness}, suppose that we are in the symmetric setting of self-transport where $(\X,\mu)=(\Y,\nu)$, and we additionally impose that $f=g$. We readily see that this constraint pins down the additive constant in \cref{th:dualUniqueness} and hence gives  uniqueness for~$f$ $\mu$-a.s.
\end{remark} 

\Cref{th:dualUniqueness} is a satisfactory result in a regular setting and covers most applications of interest. In the remainder of the section, we comment briefly on subtleties that occur in a possible extension of the analysis that we performed in the discrete case to non-discrete (and irregular) cases. First, while the definition of components applies to arbitrary sets, two almost-surely equal sets may have substantially different components. Second, some components may carry no mass, making them negligible. Next, we highlight the first aspect by detailing a continuous version of \cref{ex:discrete} and comparing with another version.

\begin{example}\label{ex:cont}
  Let $\X=\Y=[0,1]$ and let $\mu=\tmu=\nu=\tnu=\cL^{1}|_{[0,1]}$ be the uniform measure. Let $E:=[0,1/2)^{2}\cup (1/2,1]^{2}$ be the ``block diagonal'' and $E^{c}=[0,1]^{2}\setminus E$.
  Let $c(x,y)=(2+\gamma)\1_{E^{c}}$ where $\gamma>0$. We claim that
  \begin{align*}
    Z_{*}(x,y)=2\1_{E}
  \end{align*} 
  is a version of the optimal density, meaning that the optimal coupling~$\pi_{*}$ is the uniform measure on~$E$, and that $(f,g)$ are potentials if and only if $\cL^{1}$-a.s.,
  \begin{align}\label{contExPot}
  \begin{cases}
    &f(x) = \alpha, \quad g(y) = 2-\alpha, \qquad x,y\in [0,1/2), \\
    &f(x) = \beta, \quad g(y) = 2-\beta, \qquad x,y\in (1/2,1],\\
    &\mbox{for some $\alpha,\beta\in\R$ with $|\alpha-\beta|\leq \gamma$.}  
  \end{cases}  
  \end{align} 
  
  Indeed, for such $(f,g)$, we have  $f\oplus g=2$ on $E$, whereas $(f\oplus g)\in \{2+\alpha - \beta, 2-\alpha + \beta\}$ on $E^{c}$. In particular, $f\oplus g\leq 2+\gamma$ on $E^{c}$. As a result, $(f\oplus g-c)_{+}=2\1_{E}=Z_{*}$. Now \cref{th:main} shows that $Z_{*}$ is the primal optimizer and that $(f,g)$ are potentials. Conversely, let $(f,g)$ be potentials. As $f(x)+g(y)=2$ for $\cL^{2}$-a.e.\ $(x,y)\in [0,1/2)^{2}$, it follows that $f(x)=2-2\int_{0}^{1/2}g(y)\,dy=:\alpha$ for $\mu$-a.e.\ $x\in [0,1/2)$ and similarly $g(y)=2-2\int_{0}^{1/2}f(x)\,dx = 2-\alpha$ for $\nu$-a.e.\ $y\in [0,1/2)$. Analogously, $f=\beta$ $\mu$-a.s.\ on $(1/2,1]$ and $g=2-\beta$ $\nu$-a.s.\ on $(1/2,1]$, for some $\beta\in\R$. We then have $f\oplus g=2+\alpha - \beta$ $P$-a.s.\ on $[0,1/2)\times(1/2,1]$ and $f\oplus g=2-\alpha+\beta$ $P$-a.s.\ on $(1/2,1]\times[0,1/2)$. In order to satisfy $(f\oplus g-c)_{+}=Z_{*}=2\1_{E}$ $P$-a.s., we must have $|\alpha-\beta|\leq \gamma$. 
\end{example} 

In \cref{ex:cont}, the number of components of $\{Z_{*}>0\}$ correctly describes the degrees of freedom in choosing potentials. To achieve this, we picked a ``good'' version of $Z_{*}$---the following shows that the number of components of $\{Z_{*}>0\}$ can depend on the version of the density $Z_{*}$. While the ``support'' of $\pi_{*}$ has a canonical definition in the discrete case, that is not the case here (and the topological support is not a good choice). 

\begin{remark}\label{rk:contExample}
  In \cref{ex:cont}, the family in~\eqref{contExPot} are precisely the functions $f,g$ such that 
  $(f\oplus g-c)_{+}=2\1_{E}$ \emph{everywhere} on $[0,1]^{2}$ (without exceptional nullsets). 
  Next, consider the same example but define $\tilde{E}=[0,1/2]^{2}\cup [1/2,1]^{2}$, which is the topological support of~$\pi_{*}$. Obviously $\tilde E = E$ $P$-a.s., hence $Z_{*}(x,y)=2\1_{\tilde{E}}$ is another version of the optimal density. But by contrast with the above, $(f\oplus g-c)_{+}=2\1_{\tilde E}$ \emph{everywhere} on $[0,1]^{2}$ if and only if $f\equiv \alpha$ and $g\equiv 2-\alpha$ for some $\alpha\in\R$. To wit, the extra degree of freedom represented by $\beta$ has been eliminated, as $f(1/2)=\alpha$ and $f(1/2)=\beta$ now imply $\alpha=\beta$ (or similarly for $g$). The same happens if we replace $\tilde E$ by $[0,1/2]\times[0,1/2) \cup [1/2,1]\times(1/2,1]$ or the symmetric counterpart. Indeed, $\tilde E$ and the latter two sets have a single component.
  
  Due to the closure in its definition, the topological support is a rather large set carrying the measure, whereas for the current purpose we require a small set, or more precisely, a set with the \emph{maximum number of non-negligible components.}
\end{remark} 

\section{Sparsity for Small Regularization}\label{se:sparseQuadratic}

Let $\mu,\nu$ be compactly supported probability measures on $\R^{d}$. We specialize our general setting to $\X=\spt\mu$ and $\Y=\spt\nu$, with cost $c(x,y)=\|x-y\|^{2}$ the squared Euclidean distance. We further assume that $\mu\ll\cL^{d}$ and that $\Int \spt \mu$ is connected, and choose $(\tmu,\tnu)=(\mu,\nu)$ for simplicity (alternately, we can impose the conditions of \cref{le:unifContDiffMarg}).

In this setting of quadratic cost, it is well known  that the unregularized transport problem
\begin{align}\label{eq:quadUnregPrimal}
   \sP_{0} = \inf_{\pi\in\Pi(\mu,\nu)} \int c\,d\pi
\end{align}
has a unique optimal coupling $\pi_{0}$, given by Brenier's map. See, e.g., \cite{Villani.09} for background. In particular, $\pi_{0}$ is concentrated on the graph of a function $T: \R^{d}\to\R^{d}$, and hence ``sparse'' (as sparse as a coupling can be). Regularity results for~$T$ are known under conditions on the marginals.

Let $\pi_{\eps}\in\Pi(\mu,\nu)$ be the optimal coupling of the quadratically regularized problem with regularization parameter $\eps>0$ (cf.\ \cref{rk:regularizationParam}),
\begin{align*}
  \sP_{\eps} = \inf_{\pi\in\Pi(\mu,\nu)} \int c\,d\pi + \frac{\eps}{2} \|d\pi/dP\|^{2}_{L^{2}(P)}.
\end{align*}
The next result formalizes and establishes that $\pi_{\eps}$ is sparse for small $\eps$, by showing that $\spt \pi_{\eps}$ is contained in a small neighborhood of the sparse set $\spt \pi_{0}$. To the best of our knowledge, it is the first theoretical result showing sparsity of quadratically regularized  transport in a continuous setting.

\begin{theorem}[Sparsity for quadratic cost]\label{th:sparse}
  Let $U$ be an open neighborhood of $\spt \pi_{0}$. Then $\spt \pi_{\eps}\subset U$ for all sufficiently small $\eps>0$.
\end{theorem}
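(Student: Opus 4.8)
The plan is to argue by contradiction using a compactness argument combined with the convergence of regularized couplings to the Brenier coupling and, crucially, the structure of the potentials. Suppose the claim fails: there is an open neighborhood $U$ of $\spt\pi_{0}$ and a sequence $\eps_{n}\to0$ together with points $(x_{n},y_{n})\in\spt\pi_{\eps_{n}}\setminus U$. Since $\X\times\Y$ is compact, after passing to a subsequence we have $(x_{n},y_{n})\to(x_{*},y_{*})$ with $(x_{*},y_{*})\notin U$, so in particular $(x_{*},y_{*})\notin\spt\pi_{0}$. The goal is to derive a contradiction, i.e., to show that such a limit point must lie in $\spt\pi_{0}$. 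The key input is that $\pi_{\eps_{n}}\to\pi_{0}$ weakly (Gamma-convergence of the regularized problem to the unregularized one as $\eps\to0$, as established in the literature, e.g., \cite{LorenzMahler.22, EcksteinNutz.22}); weak convergence alone, however, does not control supports, only that $\spt\pi_{0}\subset\liminf\spt\pi_{\eps_{n}}$, which is the wrong inclusion. So a finer argument is needed.

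The finer argument uses the rescaled potentials. Let $(f_{\eps},g_{\eps})$ be rescaled potentials for $\pi_{\eps}$ as in \cref{rk:regularizationParam}, so that $d\pi_{\eps}/dP=\big((f_{\eps}\oplus g_{\eps}-c)/\eps\big)_{+}$. Under the standing assumptions (quadratic, hence Lipschitz, cost; $\Int\spt\mu$ connected; $(\tmu,\tnu)=(\mu,\nu)$ or the conditions of \cref{le:unifContDiffMarg}), \cref{le:unifCont}/\cref{le:unifContDiffMarg} give that $f_{\eps}$ has a Lipschitz version with a Lipschitz constant depending only on the cost (uniformly in $\eps$), and similarly for $g_{\eps}$; after fixing the additive constant by a centering normalization, the family $\{(f_{\eps},g_{\eps})\}_{\eps}$ is bounded and equi-Lipschitz on the compact set $\X\times\Y$. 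By Arzel\`a--Ascoli, along a further subsequence $f_{\eps_{n}}\to f_{0}$ and $g_{\eps_{n}}\to g_{0}$ uniformly, for some Lipschitz functions $f_{0},g_{0}$. The point $(x_{*},y_{*})\notin\spt\pi_{0}$: since $\pi_{0}$ is the Brenier coupling supported on the graph of $T=\nabla\varphi$ for a convex potential $\varphi$, $c$-cyclical monotonicity gives that the Kantorovich potentials $(\varphi_{0},\psi_{0})$ for \eqref{eq:quadUnregPrimal} satisfy $\varphi_{0}(x)+\psi_{0}(y)\le c(x,y)$ everywhere, with strict inequality off $\spt\pi_{0}$. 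The heart of the matter is to identify the limit $f_{0}\oplus g_{0}$ with (a version of) the Kantorovich potentials: one shows $f_{0}\oplus g_{0}\le c$ everywhere and $f_{0}\oplus g_{0}=c$ on $\spt\pi_{0}$, hence $(f_{0},g_{0})$ are optimal for the \emph{unregularized} dual; by uniqueness of the Brenier/Kantorovich potentials (on $\Int\spt\mu$, up to a constant), $f_{0}\oplus g_{0}=\varphi_{0}\oplus\psi_{0}$, and therefore $f_{0}(x_{*})+g_{0}(y_{*})<c(x_{*},y_{*})$ strictly. But $(x_{n},y_{n})\in\spt\pi_{\eps_{n}}$ means $d\pi_{\eps_{n}}/dP>0$ near $(x_{n},y_{n})$ in the appropriate sense, so $f_{\eps_{n}}(x_{n})+g_{\eps_{n}}(y_{n})-c(x_{n},y_{n})\ge 0$ (taking the everywhere-defined Lipschitz versions, and using that $\spt\pi_{\eps_{n}}\subset\overline{\{(f_{\eps_{n}}\oplus g_{\eps_{n}}-c)_{+}>0\}}$); passing to the limit gives $f_{0}(x_{*})+g_{0}(y_{*})-c(x_{*},y_{*})\ge0$, contradicting the strict inequality.

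Two points require care and constitute the main obstacles. The first is verifying $f_{0}\oplus g_{0}\le c$ and $f_{0}\oplus g_{0}=c$ on $\spt\pi_{0}$. For the inequality: from the optimality system \eqref{eq:marginalEqnu}--\eqref{eq:marginalEqmu} for $\pi_{\eps_{n}}$ one has $\int\big((f_{\eps_{n}}\oplus g_{\eps_{n}}-c)/\eps_{n}\big)_{+}\,dP=1$ with marginals fixed, so the positive part of $(f_{\eps_{n}}\oplus g_{\eps_{n}}-c)/\eps_{n}$ stays $L^{2}(P)$-bounded (indeed its $L^{2}$ norm is $\|Z_{\eps_{n}}\|_{L^{2}(P)}$, which is bounded since $\sP_{\eps_{n}}$ is bounded and the linear term is bounded); hence $(f_{\eps_{n}}\oplus g_{\eps_{n}}-c)_{+}=\eps_{n}Z_{\eps_{n}}\to 0$ in $L^{2}(P)$, and by the uniform convergence $f_{\eps_{n}}\oplus g_{\eps_{n}}\to f_{0}\oplus g_{0}$ we get $(f_{0}\oplus g_{0}-c)_{+}=0$ $P$-a.e., i.e.\ $f_{0}\oplus g_{0}\le c$ $P$-a.e., which upgrades to everywhere by continuity and $\spt P=\X\times\Y$. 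For the equality on $\spt\pi_{0}$: integrate $f_{\eps_{n}}\,d\mu+g_{\eps_{n}}\,d\nu-\tfrac{\eps_{n}}{2}\int((f_{\eps_{n}}\oplus g_{\eps_{n}}-c)/\eps_{n})_{+}^{2}\,dP=\sD_{\eps_{n}}=\sP_{\eps_{n}}\to\sP_{0}$, and the middle term is $\tfrac{\eps_{n}}{2}\|Z_{\eps_{n}}\|_{L^{2}(P)}^{2}\to0$, so $\int(f_{0}\oplus g_{0})\,d\pi_{0}=\lim\int f_{\eps_{n}}\,d\mu+\int g_{\eps_{n}}\,d\nu=\sP_{0}=\int c\,d\pi_{0}$; combined with $f_{0}\oplus g_{0}\le c$ this forces $f_{0}\oplus g_{0}=c$ $\pi_{0}$-a.e., hence everywhere on $\spt\pi_{0}$ by continuity. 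The second obstacle is the passage from ``$(x_{n},y_{n})\in\spt\pi_{\eps_{n}}$'' to ``$(f_{\eps_{n}}\oplus g_{\eps_{n}}-c)(x_{n},y_{n})\ge0$'': this needs the Lipschitz (everywhere-defined) version of the density, whose zero set is closed, so the topological support of $\pi_{\eps_{n}}$ is contained in $\{(f_{\eps_{n}}\oplus g_{\eps_{n}}-c)_{+}>0\}^{-}=\{f_{\eps_{n}}\oplus g_{\eps_{n}}\ge c\}$; this is exactly where the regularity assumptions on the marginals (giving Lipschitz potentials via \cref{le:unifCont} or \cref{le:unifContDiffMarg}) are essential, and is the reason the theorem is stated in this regular setting.
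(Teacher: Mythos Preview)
Your proposal is correct and follows essentially the same approach as the paper: compactness of the rescaled potentials via the uniform Lipschitz bound from \cref{le:unifCont}/\cref{le:unifContDiffMarg} and Arzel\`a--Ascoli, identification of the subsequential limit $(f_{0},g_{0})$ with the (unique) Kantorovich potentials by showing feasibility $f_{0}\oplus g_{0}\le c$ and optimality for the unregularized dual, and then the key geometric fact that $\{f_{0}\oplus g_{0}=c\}=\spt\pi_{0}$ for quadratic cost. The only cosmetic differences are that the paper argues directly (showing $\spt\pi_{\eps}\subset\{f\oplus g\ge c-\delta\}\subset U$ for small~$\eps$) rather than via a contradiction sequence $(x_{n},y_{n})$, and verifies $f_{0}\oplus g_{0}\le c$ by noting that a strict violation would force $Z_{\eps}\to\infty$ on a set of positive $P$-measure, whereas you reach the same conclusion through $\eps_{n}Z_{\eps_{n}}\to0$ in $L^{2}(P)$.
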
 

\begin{proof}
  We recall that in the present setting, the unregularized transport problem~\eqref{eq:quadUnregPrimal} admits a unique optimal coupling $\pi_{0}$ and a unique (up to constant) Kantorovich potential $f:\X\to\R$ that is Lipschitz continuous (see, e.g., \cite[Theorem~5.10]{Villani.09} for existence and \cite[Appendix~B]{BerntonGhosalNutz.21} for uniqueness). Fix $x_{0}\in\X$; then we may normalize $f(x_{0})=0$ to have uniqueness. Let $g$ be the $c$-concave conjugate of $f$, so that $(f,g)$ is the solution of the dual optimal transport problem
\begin{align}\label{eq:dualUnregOT}
  \sD_{0}=\sup_{(f,g)\in C(\X)\times C(\Y):\,  f\oplus g \leq c} \;\int f \,d\mu + \int g \,d\nu.
\end{align}   
It is known that $\spt \pi_{0}=\{f\oplus g = c\}$. Indeed, $\pi_{0}\{f\oplus g = c\}=1$ holds for general costs. The inclusion $\spt \pi_{0}\supset\{f\oplus g = c\}$, which is crucial below, follows because the section $\{f(x)+ g(\cdot) = c(x,\cdot)\}\subset\Y$ is a singleton for $\mu$-a.e.\ $x\in\X$ (as the subdifferential of an a.e.\ differentiable function).

Let $(f_{\eps},g_{\eps})$ be the rescaled potentials as defined in \cref{rk:regularizationParam}. By \cref{le:unifCont}\,(iii), $f_{\eps}$ can be chosen to be $L$-Lipschitz, where $L$ is the Lipschitz constant of $c$ on the compact set $\X\times\Y$ (note that $f_{\eps}/\eps$ is the potential for $c/\eps$ without rescaling). We may normalize $f_{\eps}(x_{0})=0$ and then $f_{\eps}$ is also bounded uniformly in $\eps$. The Arzela--Ascoli theorem shows that given a sequence $\eps_{n}\to0$, a subsequence of $(f_{\eps_{n}})$ converges uniformly to some limit~$f_{*}$. After passing to another subsequence (still denoted $\eps_{n}$), we also have uniform convergence $g_{\eps_{n}}\to g_{*}$. We show below that $(f_{*},g_{*})=(f,g)$. In particular, the uniqueness of the Kantorovich potential then implies that $(f_{\eps},g_{\eps})\to(f,g)$ uniformly for $\eps\to0$.
  
  Let $U$ be an open neighborhood of $\spt \pi_{0}$. As $\spt \pi_{0}=\{f\oplus g = c\}$ and $f,g,c$ are continuous and $\X\times\Y$ is compact, there exists $\delta>0$ such that $\{f\oplus g \geq c-\delta\}\subset U$. Recall that the density of $\pi_{\eps}$ has the form
\begin{align}\label{eq:ZepsForm}
  Z_{\eps} = \left(\frac{f_{\eps}\oplus g_{\eps} - c}{\eps}\right)_{+}.
\end{align}
In view of the uniform convergence $(f_{\eps},g_{\eps})\to(f,g)$, there exists $\eps_{0}>0$ such that for all $\eps\in(0,\eps_{0})$, 
  \begin{align*}
  \spt \pi_{\eps} = \overline{\{Z_{\eps}>0\}} = \overline{ \{f_{\eps}\oplus g_{\eps} > c\}} \subset \{f\oplus g \geq c-\delta\}\subset U.
\end{align*}

It remains to prove $(f_{*},g_{*})=(f,g)$. To that end, we show that $(f_{*},g_{*})$ solves the dual problem~\eqref{eq:dualUnregOT}.  
We first verify that $(f_{*},g_{*})$ is in the dual domain; i.e., that $f_{*}\oplus g_{*}\leq c$. Suppose that $f_{*}(x)+g_{*}(y)>c(x,y)$ at some $(x,y)\in \X\times\Y$. Then by continuity of $f_{*}$, $g_{*}$ and $c$, the set $\{f_{*}\oplus g_{*}>c\}$ includes a compact neighborhood~$B$ of~$(x,y)$. In view of~\eqref{eq:ZepsForm}, it follows that $Z_{\eps} \to\infty$ uniformly on~$B$ as $\eps\to0$. As $(\mu\otimes\nu)(B)>0$ due to $(x,y)\in \X\times\Y=\spt (\mu\otimes\nu)$, this contradicts the fact that $ \int Z_{\eps} \,d(\mu\otimes\nu)=1$ for all $\eps>0$.

Second, we verify that $(f_{*},g_{*})$ is optimal for~\eqref{eq:dualUnregOT}. By duality we have $\sD_{\eps}=\sP_{\eps}$ and $\sP_{0}=\sD_{0}$, and clearly $\sP_{0}\leq\sP_{\eps}$ as the quadratic penalty is nonnegative. Thus also $\sD_{0}\leq\sD_{\eps}$, which by~\eqref{eq:dualEpsRescaled} yields
\begin{align*}%
  \sD_{0}\leq \sD_{\eps} 
  &= \int f_{\eps}\,d\mu + \int g_{\eps}\,d\nu - \frac{\eps}{2}\int \left(\frac{f_{\eps}\oplus g_{\eps} - c}{\eps}\right)_{+}^{2}\,dP \\
  &\leq \int f_{\eps}\,d\mu + \int g_{\eps}\,d\nu \to \int f_{*}\,d\mu + \int g_{*}\,d\nu.
\end{align*}
Hence $(f_{*},g_{*})$ solves~\eqref{eq:dualUnregOT} and the proof is complete.
\end{proof} 

\begin{remark}\label{rk:onSparcityResult}
  (a) \Cref{th:sparse} has a trivial analogue in the discrete setting, where it is known that $\pi_{\eps}=\pi_{0}$ for small $\eps>0$, for a certain optimal transport~$\pi_{0}$. See, e.g., \cite{DesseinPapadakisRouas.18}. This observation goes back to \cite{MangasarianMeyer.79} for more general linear programs with quadratic regularization. A quantitative version, characterizing the maximal~$\eps$ with $\pi_{\eps}=\pi_{0}$, was recently given in~\cite{GonzalezSanzNutz.24}. Still in the discrete setting, \cite{GonzalezSanzNutzRiveros.24} shows that the support of $\pi_{\eps}$ need not shrink monotonically in $\eps$. 
  
  (b) As $\pi_{\eps}\to\mu\otimes\nu$ for $\eps\to\infty$, sparsity certainly requires $\eps$ to be small in some sense.  One can hope for a quantitative version of \cref{th:sparse}, stating that $\spt \pi_{\eps}$ is in a $\delta$-neighborhood of $\spt \pi_{0}$, where $\delta=\delta(\eps)$ has an explicit dependence on $\eps$. This problem is left for future research. The proof given above merely uses the straightforward (qualitative) convergence of the potentials; see also \cite{GigliTamanini.21, NutzWiesel.21} for related results on the convergence of potentials for entropic regularization. The proof does extend to more general costs~$c$: the key property is that the Kantorovich dual $f\oplus g$ ``detaches'' from the cost $c$ outside the support of the optimal coupling $\pi_{0}$. See \cite{CarlierPegonTamanini.22, MalamutSylvestre.23} for recent developments in this direction. 
\end{remark} 

\section{Proof of \cref{th:main}\hspace{.05em}{(i)--(iii)}}\label{se:proofMain}

We first take care of generalities---primal existence, automatic integrability and weak duality---which will reduce the proof of \cref{th:main} to the main task, namely to show that the optimal density is of the form $Z_{*}=(f\oplus g - c)_{+}$.

\subsection{Primal Existence}\label{se:primalEx}

The existence and uniqueness of the primal optimizer is straightforward and well known. We detail the argument for later use.

\begin{proof}[Proof of \cref{th:main}(ii)]
Recall~\eqref{eq:cIntCond}. For any $Z\in\cZ$, clearly 
\begin{align*}
  \int cZ\,dP + \frac12 \|Z\|^{2}_{L^{2}(P)} 
  &= \int (cZ+ \tfrac12 Z^{2})\,dP \\
  &= \tfrac12 \|c+Z\|_{L^{2}(P)}^{2} -\tfrac12 \|c\|_{L^{2}(P)}^{2},
\end{align*} 
where the last term is a finite constant independent of $Z$.
In particular,
\begin{align}\label{eq:primalProj}
    \sP = \inf_{Z\in\cZ} \tfrac12 \|c+Z\|_{L^{2}(P)}^{2} -\tfrac12 \|c\|_{L^{2}(P)}^{2}.
\end{align}
The subset $\cZ\subset L^{2}(P)$ is closed, convex, and nonempty as  $d(\mu\otimes\nu)/dP\in\cZ$ due to~\eqref{eq:refMeasureCond}. The claim thus follows from the existence, uniqueness and characterization of the Hilbert space projection onto~$\cZ$.
\end{proof} 

\subsection{Integrability Properties}\label{se:integrabilityProperties}

This subsection establishes the automatic integrability of potentials. 
We first derive a simple lower bound similar to a result in~\cite{LorenzMannsMeyer.21}. See~\eqref{eq:cIntCond} for the notation~$c_{1},c_{2}$. %

\begin{lemma}[Lower bound]\label{le:lowerBounds}
    Let $(f\oplus g -c)_{+}$ be the $P$-density of a coupling $\pi\in\Pi(\mu,\nu)$, for some measurable functions $f:\X\to\R$ and $g:\Y\to\R$. Then
  \begin{align*}
    (f-c_{1})_{-}\in L^{\infty}(\tmu), \qquad (g-c_{2})_{-}\in L^{\infty}(\tnu). 
  \end{align*}  
  In particular, $f_{-}\in L^{1}(\tmu)\cap L^{1}(\mu)$ and $g_{-}\in L^{1}(\tnu)\cap L^{1}(\nu)$.
  If $c_{-}\in L^{\infty}(P)$, then $(f_{-},g_{-})\in L^{\infty}(\tmu)\times L^{\infty}(\tnu)$. 
  
  Moreover, if $c \in L^{\infty}(P)$, then $(f-\frac{d\mu}{d\tmu})_{+}\in L^{\infty}(\tmu)$ and $ (g-\frac{d\mu}{d\tmu})_{+} \in L^{\infty}(\tnu)$.
\end{lemma}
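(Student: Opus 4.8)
The hypothesis is exactly condition~(b) of \cref{th:main}(iii), so its equivalent form~(d)---the marginal identities \eqref{eq:marginalEqnu}--\eqref{eq:marginalEqmu}---is available, and the plan is to use it throughout. Note also that the inverse-density bounds in \eqref{eq:refMeasureCond} give constants $\delta_{\mu},\delta_{\nu}>0$ with $\frac{d\mu}{d\tmu}\geq\delta_{\mu}$ $\tmu$-a.s.\ and $\frac{d\nu}{d\tnu}\geq\delta_{\nu}$ $\tnu$-a.s. First I would establish the auxiliary fact $f_{+}\in L^{1}(\tmu)$ and $g_{+}\in L^{1}(\tnu)$: by \eqref{eq:marginalEqmu} the function $y\mapsto(f(x)+g(y)-c(x,y))_{+}$ lies in $L^{1}(\tnu)$ for $\tmu$-a.e.\ $x$, and $c\in L^{2}(P)$ gives $c(x,\cdot)\in L^{1}(\tnu)$ for $\tmu$-a.e.\ $x$; fixing a single $x_{1}$ with both properties and writing $g(y)=\big(f(x_{1})+g(y)-c(x_{1},y)\big)+\big(c(x_{1},y)-f(x_{1})\big)$, the subadditivity of $(\cdot)_{+}$ yields $g_{+}\leq (f(x_{1})+g(\cdot)-c(x_{1},\cdot))_{+}+|c(x_{1},\cdot)|+|f(x_{1})|\in L^{1}(\tnu)$, and symmetrically $f_{+}\in L^{1}(\tmu)$.

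The core of the argument is to prove $(f-c_{1})_{-}\in L^{\infty}(\tmu)$, after which $(g-c_{2})_{-}\in L^{\infty}(\tnu)$ follows by exchanging the roles of the two coordinates. Set $\hat f:=f-c_{1}$ and $\hat g:=g-c_{2}$. From $c\geq c_{1}\oplus c_{2}$ one has $f(x)+g(y)-c(x,y)\leq\hat f(x)+\hat g(y)$ pointwise, while the auxiliary fact and $c_{2}\in L^{1}(\tnu)$ give $(\hat g)_{+}\leq g_{+}+|c_{2}|\in L^{1}(\tnu)$. Plugging the pointwise bound into \eqref{eq:marginalEqmu} yields, for $\tmu$-a.e.\ $x$,
\[
\begin{aligned}
  \delta_{\mu}\;\leq\;\frac{d\mu}{d\tmu}(x)&=\int_{\Y}(f(x)+g(y)-c(x,y))_{+}\,\tnu(dy)\\
  &\leq\int_{\Y}(\hat f(x)+\hat g(y))_{+}\,\tnu(dy)\;=:\;\phi(\hat f(x)),
\end{aligned}
\]
where $\phi:\R\to[0,\infty]$ is nondecreasing, finite on $(-\infty,0]$ (being dominated there by $\int(\hat g)_{+}\,d\tnu<\infty$), and satisfies $\phi(t)\downarrow0$ as $t\to-\infty$ by dominated convergence. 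Choosing $M_{0}$ with $\phi(-M_{0})<\delta_{\mu}$ and invoking the monotonicity of $\phi$ forces $\hat f(x)>-M_{0}$ for $\tmu$-a.e.\ $x$, i.e.\ $(f-c_{1})_{-}\leq M_{0}$. I expect this to be the main obstacle: since $c$ carries no a priori upper bound one cannot bound~$f$ from below directly off \eqref{eq:marginalEqmu}, so the auxiliary $L^{1}(\tnu)$-integrability of $(\hat g)_{+}$ is precisely what makes $\phi(t)\to0$, while the strict positivity $\frac{d\mu}{d\tmu}\geq\delta_{\mu}$ (hence the inverse-density hypotheses in \eqref{eq:refMeasureCond}) is what converts this into a \emph{uniform} bound.

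The remaining assertions then follow by elementary manipulations. Since $f_{-}\leq(f-c_{1})_{-}+(c_{1})_{-}\leq M_{0}+|c_{1}|$ with $c_{1}\in L^{1}(\tmu)\cap L^{1}(\mu)$, we get $f_{-}\in L^{1}(\tmu)\cap L^{1}(\mu)$, and similarly for $g_{-}$. If $c_{-}\in L^{\infty}(P)$, I would apply the core step with the admissible constant decomposition $c_{1}:=-\|c_{-}\|_{L^{\infty}(P)}$, $c_{2}:=0$ (valid since then $c\geq c_{1}\oplus c_{2}$ and $c_{1},c_{2}$ are constants) to obtain $(f+\|c_{-}\|_{L^{\infty}(P)})_{-}\in L^{\infty}(\tmu)$ and $g_{-}\in L^{\infty}(\tnu)$, and then $f_{-}\leq(f+\|c_{-}\|_{L^{\infty}(P)})_{-}+\|c_{-}\|_{L^{\infty}(P)}\in L^{\infty}(\tmu)$. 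Finally, if $c\in L^{\infty}(P)$, write $K:=\|c\|_{L^{\infty}(P)}$; the previous case gives $g\geq-K'$ $\tnu$-a.s.\ for some $K'\geq0$, so for $\tmu$-a.e.\ $x$ with $f(x)\geq K+K'$ the integrand in \eqref{eq:marginalEqmu} is $\geq f(x)-K-K'\geq0$ for $\tnu$-a.e.\ $y$, whence $\frac{d\mu}{d\tmu}(x)\geq f(x)-K-K'$; as $\frac{d\mu}{d\tmu}\geq0$, in all cases $f(x)-\frac{d\mu}{d\tmu}(x)\leq K+K'$, i.e.\ $(f-\frac{d\mu}{d\tmu})_{+}\in L^{\infty}(\tmu)$, and the symmetric argument (using \eqref{eq:marginalEqnu} and $f_{-}\in L^{\infty}(\tmu)$) gives $(g-\frac{d\nu}{d\tnu})_{+}\in L^{\infty}(\tnu)$.
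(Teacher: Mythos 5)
Your argument is correct and follows essentially the same route as the paper's. The core step is identical in spirit: bound the marginal identity~\eqref{eq:marginalEqmu} by a function of $\hat f(x)=f(x)-c_{1}(x)$ that vanishes as $\hat f(x)\to-\infty$, then invoke the uniform bound $\frac{d\mu}{d\tmu}\geq\delta_{\mu}>0$ from \eqref{eq:refMeasureCond}; the paper pushes the pointwise estimate one step further to a truncated-tail integral $\Phi(\alpha)=\int (\hat g)_{+}\,\1_{\{(\hat g)_{+}>\alpha\}}\,d\tnu$ and uses its generalized inverse, whereas you work directly with $\phi(t)=\int(t+\hat g)_{+}\,d\tnu$, but both hinge on $(\hat g)_{+}\in L^{1}(\tnu)$ together with dominated convergence. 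Your explicit derivation of $g_{+}\in L^{1}(\tnu)$ and the constant-decomposition device $c_{1}:=-\|c_{-}\|_{\infty}$, $c_{2}:=0$ for the $c_{-}\in L^{\infty}(P)$ claim make precise steps the paper states without detail, and you correctly read the last assertion as $(g-\frac{d\nu}{d\tnu})_{+}\in L^{\infty}(\tnu)$, where the paper's statement has an evident transcription error.
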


\begin{proof}
  Recall~\eqref{eq:conjugateDefn} and~\eqref{eq:cIntCond}. Our first goal is to prove that 
  \begin{align}\label{eq:lowerBoundPrf}
    (f(x)-c_{1}(x))_{-} \leq \Phi^{-1}\left(\frac{d\mu}{d\tmu}(x)\right) \qquad\mbox{for $\tmu$-a.e.\ $x\in\X$,}
  \end{align} 
  for a nonnegative function $\Phi^{-1}$ to be defined below. This holds trivially for $x\in\X$ such that $f(x)-c_{1}(x)\geq 0$, as that makes the left-hand side vanish. Whereas for $\tmu$-a.e.\ $x\in\X$ such that $t:=f(x)$ satisfies $t-c_{1}(x)\leq 0$, we have 
  \begin{align*}
     F_{x}(t)
     &=\int [t +g(y)-c(x,y)]_{+}\,\tnu(dy) \\
     &\leq \int (t-c_{1}(x) + g(y)-c_{2}(y))_{+}\,\tnu(dy)\\
     &= \int [ g(y)-c_{2}(y)  - (t-c_{1}(x))_{-}]_{+}\,\tnu(dy)\\
     &\leq \int [(g(y)-c_{2}(y))_{+} - (t-c_{1}(x))_{-}]_{+}\,\tnu(dy)\\
     &\leq \int h(y) \1_{h(y) > (t-c_{1}(x))_{-}}\,\tnu(dy)  = \Phi((t-c_{1}(x))_{-})
  \end{align*}  
  where $h(y):=(g(y)-c_{2}(y))_{+}$ and $\Phi(\alpha) := \int h(y)\1_{h(y)>\alpha}\,\tnu(dy)$.   As $c_{2}\in L^{1}(\tnu)$ and, necessarily, $g_{+}\in L^{1}(\tnu)$, we have $h\in L^{1}(\tnu)$. In particular, $\Phi$ is a nonincreasing function with $\lim_{\alpha\to\infty} \Phi(\alpha)=0$. Define the generalized inverse $\Phi^{-1}(v):=\sup\{u:\, \Phi(u)\geq v\}$, which is a nonincreasing function  $(0,A)\to \R_{+}$ for $A:=\int h(y)\1_{h(y)>\alpha}\,\tnu(dy)$. It follows with $\frac{d\mu}{d\tmu}(x)=F_{x}(t)$ for $t=f(x)$ that~\eqref{eq:lowerBoundPrf} holds. (The above chain of inequalities ensures that $\frac{d\mu}{d\tmu}(x)=F_{x}(t)$ is in the domain of $\Phi^{-1}$.)

  As we have assumed in~\eqref{eq:refMeasureCond} that $\frac{d\mu}{d\tmu}$ is a.s.\ uniformly bounded away from zero, \eqref{eq:lowerBoundPrf} and the monotonicity of $\Phi^{-1}$ show that $(f-c_{1})_{-}\in L^{\infty}(\tmu)$. Recalling that $c_{1}\in L^{1}(\mu)\cap L^{1}(\tmu)$, this also implies $f_{-}\in L^{1}(\tmu)\cap L^{1}(\mu)$.
  
  The claims that $(g-c_{2})_{-}\in L^{\infty}(\tnu)$ and $g_{-}\in L^{1}(\tnu)\cap L^{1}(\nu)$ now follow by symmetry.
  
  Suppose that $c \in L^{\infty}(P)$. Then 
  \[
    \frac{d\mu}{d\tmu}(x)=F_{x}(t)=\int (t +g(y)-c(x,y))_{+}\,\tnu(dy) \geq t - \|g_{-}\|_{L^{\infty}(\tnu)} - \|c\|_{L^{\infty}(P)}
  \]
  for $t=f(x)$ implies $f(x)-\frac{d\mu}{d\tmu}(x) \leq \|g_{-}\|_{L^{\infty}(\tnu)} + \|c\|_{L^{\infty}(P)}$, and it was shown above that $\|g_{-}\|_{L^{\infty}(\tnu)}$ is finite when $c \in L^{\infty}(P)$.
\end{proof}

The next lemma will entail in particular that the algebraic form $(f\oplus g -c)_{+}$ already identifies the optimal coupling, without any (a priori) integrability condition on $(f,g)$. As a consequence, the system of equations in \cref{th:main}\,(d) fully characterizes potentials, without a separate condition.

\begin{lemma}[Automatic integrability]\label{le:fgIntegrable}
  Let $Z=(f\oplus g -c)_{+}$ be the $P$-density of a coupling $\pi\in\Pi(\mu,\nu)$, for some measurable functions $f:\X\to\R$ and $g:\Y\to\R$. Then %
  \begin{enumerate}
  \item $(f_{+},g_{+})\in L^{2}(\tmu)\times L^{2}(\tnu)$,
  \item $Z\in\cZ$; i.e., $Z\in L^{2}(P)$,
  \item $(f,g)\in L^{1}(\mu)\times L^{1}(\nu)$.
  \end{enumerate}
\end{lemma}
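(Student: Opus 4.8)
The plan is to bootstrap from the marginal constraint and the lower bounds already established in \cref{le:lowerBounds}. We already know $f_-\in L^1(\tmu)$, $g_-\in L^1(\tnu)$, so the real content is the $L^2$-integrability of the positive parts in (i); items (ii) and (iii) will follow quickly from (i). First I would work with the version-free identities: since $Z\,dP\in\Pi(\mu,\nu)$, for $\tnu$-a.e.\ $y$ we have $\int_\X (f(x)+g(y)-c(x,y))_+\,\tmu(dx)=\frac{d\nu}{d\tnu}(y)$, and symmetrically for $\tmu$-a.e.\ $x$ (this is the equivalence (b)$\Leftrightarrow$(d) proved above). The key elementary observation is a lower bound of the form $(a)_+\geq a\cdot\1_{\{a>0\}}\geq \big(g(y)-c(x,y)-f(x)_-\big)\1_{\{\cdots\}}$; more usefully, for fixed $x$ with $f(x)$ large, the integrand $(f(x)+g(y)-c(x,y))_+$ is at least $\tfrac12 f(x)$ on the set $\{y: g(y)-c(x,y)\geq -\tfrac12 f(x)\}$, and that set has $\tnu$-measure close to $1$ once $f(x)$ is large, because $g_--c$-type quantities are controlled in $L^1$. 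Quantitatively: $\tnu\{y: g(y)_- + c(x,y)_+ > \tfrac12 f(x)\}\leq \frac{2}{f(x)}\int (g_-(y)+c(x,y)_+)\,\tnu(dy)$ by Markov, and the right side is small uniformly... except that $\int c(x,y)_+\,\tnu(dy)$ depends on $x$. So I would instead integrate in $x$ as well and use $c\in L^2(P)$.

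Here is the cleaner route I would actually carry out. Fix the identity $\int_\Y (f(x)+g(y)-c(x,y))_+\,\tnu(dy)=\frac{d\mu}{d\tmu}(x)$ for $\tmu$-a.e.\ $x$. For such $x$, bound below:
\[
\frac{d\mu}{d\tmu}(x)=\int_\Y (f(x)+g(y)-c(x,y))_+\,\tnu(dy)\geq \int_\Y \big(f(x)-g(y)_--c(x,y)_+\big)_+\,\tnu(dy)\geq f(x)-\|g_-\|_{L^1(\tnu)}-\int_\Y c(x,y)_+\,\tnu(dy),
\]
using $\|g_-\|_{L^1(\tnu)}<\infty$ from \cref{le:lowerBounds}. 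Hence $f(x)_+\leq \frac{d\mu}{d\tmu}(x)+\|g_-\|_{L^1(\tnu)}+\int_\Y c(x,y)_+\,\tnu(dy)$. Now square and integrate against $\tmu$: the term $\frac{d\mu}{d\tmu}\in L^2(\tmu)$ by \eqref{eq:refMeasureCond}; the constant is in $L^2(\tmu)$ trivially; and $\int_\X\big(\int_\Y c(x,y)_+\,\tnu(dy)\big)^2\tmu(dx)\leq \int_\X\int_\Y c(x,y)_+^2\,\tnu(dy)\,\tmu(dx)=\|c_+\|_{L^2(P)}^2<\infty$ by Jensen's inequality (and $c\in L^2(P)$). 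Therefore $f_+\in L^2(\tmu)$, and symmetrically $g_+\in L^2(\tnu)$, proving (i).

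For (ii): write $|Z|=(f\oplus g-c)_+\leq f(x)_+ + g(y)_+ + c(x,y)_-$ pointwise, and each of the three terms is in $L^2(P)$ — the first because $\int_{\X\times\Y} f(x)_+^2\,dP = \|f_+\|_{L^2(\tmu)}^2<\infty$ by (i) (here $P=\tmu\otimes\tnu$ is finite so a function of $x$ alone is in $L^2(P)$ iff it is in $L^2(\tmu)$), the second symmetrically, the third since $c\in L^2(P)$. Hence $Z\in L^2(P)$, i.e.\ $Z\in\cZ$. For (iii): combine (i) with \cref{le:lowerBounds}; indeed $f_+\in L^2(\tmu)\subset L^1(\tmu)$ and $f_-\in L^1(\tmu)$ give $f\in L^1(\tmu)$, and since $\frac{d\mu}{d\tmu}\in L^2(\tmu)$ we get $f\in L^1(\mu)$ by Cauchy--Schwarz; symmetrically $g\in L^1(\nu)$. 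I expect the only mildly delicate point to be the application of Jensen/Fubini to pass from the $x$-fibered integral of $c_+$ to $\|c_+\|_{L^2(P)}$ — this needs $\tnu$ to be a probability measure, which it is — so there is no genuine obstacle; the lemma is essentially a careful chaining of Markov's inequality, the marginal identity, and the hypothesis $c\in L^2(P)$, starting from the $L^1$ lower bounds already in hand.
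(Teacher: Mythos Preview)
Your proof follows essentially the same route as the paper's: bound $f$ from above via the marginal identity~\eqref{eq:marginalEqmu} together with $g_-\in L^1(\tnu)$ from \cref{le:lowerBounds}, then use Jensen to place the $c$-term in $L^2(\tmu)$, concluding $f_+\in L^2(\tmu)$; parts (ii) and (iii) follow. One small slip in your argument for (iii): you cannot pass from $f\in L^1(\tmu)$ to $f\in L^1(\mu)$ by Cauchy--Schwarz with $\frac{d\mu}{d\tmu}\in L^2(\tmu)$ --- that would need $f\in L^2(\tmu)$, which you do not have for $f_-$. Instead (as the paper does) apply Cauchy--Schwarz only to $f_+\in L^2(\tmu)$ to obtain $f_+\in L^1(\mu)$, and invoke $f_-\in L^1(\mu)$ directly from \cref{le:lowerBounds}.
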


\begin{proof}%
Recall that the equivalence of (b) and (d) in \cref{th:main}\,(iii) was already proved (after \cref{rk:regularizationParam}). Hence we have from~\eqref{eq:marginalEqmu} that for $\tmu$-a.e.\ $x\in\X$,
\begin{align}
  \frac{d\mu}{d\tmu}(x) 
  & = \int (f(x)+g(y)-c(x,y))_{+}\,\tnu(dy) \nonumber\\
  & \geq f(x) - \int g_{-} \,d\tnu  - \int c(x,y)\,\tnu(dy) \label{eq:fUpperL2bound}.
\end{align} 
We have $\int g_{-} \,d\tnu<\infty$ by \cref{le:lowerBounds}, and the last term is in $L^{2}(\tmu)$ by Jensen:
\[
\int \left(\int c(x,y)\,\tnu(dy)\right)^{2}\tmu(dx) \leq \iint c(x,y)^{2}\,\tnu(dy)\tmu(dx) = \|c\|^{2}_{L^{2}(P)}<\infty.
\]
Hence~\eqref{eq:fUpperL2bound} and~\eqref{eq:refMeasureCond} establish that~$f$ is bounded from above by a function in $L^{2}(\tmu)$; that is, $f_{+}\in L^{2}(\tmu)$. Analogously, $g_{+}\in L^{2}(\tnu)$, completing~(i).

As $c\in L^{2}(P)$, it follows that $Z=(f(x)+g(y)-c(x,y))_{+}\in L^{2}(P)$, which is (ii). The Cauchy--Schwarz inequality and~\eqref{eq:refMeasureCond} yield $L^{2}(\tmu) \subset L^{1}(\mu)$ and $L^{2}(\tnu) \subset L^{1}(\nu)$. Hence~(i) implies $(f_{+},g_{+})\in L^{1}(\mu)\times L^{1}(\nu)$. On the other hand, $(f_{-},g_{-})\in L^{1}(\mu)\times L^{1}(\nu)$ by \cref{le:lowerBounds}, showing (iii).
\end{proof}

\subsection{Weak Duality}

The next lemma provides weak duality and reduces strong duality to existence of potentials.

\begin{lemma}\label{le:weakDuality}
  \begin{enumerate}
  \item
  For all $Z\in\cZ$ and $(f,g)\in L^{1}(\mu)\times L^{1}(\nu)$,
  \begin{align*}
    \int \left(cZ + \tfrac12 Z^{2}\right)  \,dP \geq \int f\,d\mu + \int g\,d\nu  - \frac12 \int (f\oplus g - c)_{+}^{2} \, dP,
  \end{align*} 
  with equality holding iff $Z=(f\oplus g - c)_{+}$ $P$-a.s.  
  In particular, $\sP\geq \sD$.
  
  \item
  Let $f:\X\to\R$ and $g:\Y\to\R$ be measurable and suppose that $Z:=(f\oplus g - c)_{+}$ is the density of a coupling. Then
  \begin{enumerate}
  \item $Z\in\cZ$ and $(f,g)\in L^{1}(\mu)\times L^{1}(\nu)$,
    \item there is no duality gap: $\sP=\sD$,
  \item $Z$ is optimal for the primal problem~\eqref{eq:primal},
  \item $(f,g)$ is optimal for the dual problem~\eqref{eq:dual}.
  \end{enumerate} 
  
  \item 
  Conversely, suppose that $\sP=\sD$. If $(f,g)\in L^{1}(\mu)\times L^{1}(\nu)$ is optimal for the dual problem~\eqref{eq:dual}, then $(f\oplus g - c)_{+}$ is in~$\cZ$ and optimal for the primal problem~\eqref{eq:primal}.
  \end{enumerate}
\end{lemma}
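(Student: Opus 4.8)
The plan is to prove the three parts in order, since (ii) and (iii) both build on the pointwise inequality established in (i). For part (i), I would start from the elementary pointwise fact that for any real number $z\ge 0$ and any real $a$,
\begin{align*}
  az + \tfrac12 z^{2} \;\geq\; -\tfrac12 (a)_{-}^{2} \;=\; \min_{w\ge 0}\big(aw+\tfrac12 w^{2}\big),
\end{align*}
with equality iff $z=(-a)_{+}$ (i.e.\ $z$ is the unconstrained minimizer of the convex quadratic $w\mapsto aw+\tfrac12 w^{2}$ over $w\ge0$). Applying this with $z=Z(x,y)$ and $a=c(x,y)-f(x)-g(y)$ gives, pointwise $P$-a.s.,
\begin{align*}
  cZ+\tfrac12 Z^{2} \;\geq\; (f\oplus g)Z - cZ \cdot 0 \;\;\text{hmm}
\end{align*}
— more precisely, $cZ+\tfrac12 Z^{2}\ge (f\oplus g)Z - \tfrac12\big((f\oplus g)-c\big)_{+}^{2}$, with equality iff $Z=(f\oplus g-c)_{+}$. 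Integrating against $P$ and using that $Z\,dP\in\Pi(\mu,\nu)$, so that $\int(f\oplus g)Z\,dP=\int f\,d\mu+\int g\,d\nu$ (legitimate because $(f,g)\in L^{1}(\mu)\times L^{1}(\nu)$ and $Z\,dP$ has the right marginals), yields the claimed inequality; taking the infimum over $Z\in\cZ$ and the supremum over $(f,g)$ gives $\sP\ge\sD$. The one point requiring a little care is the integrability needed to split $\int(f\oplus g)Z\,dP$ into the two marginal integrals — I would note that $f_{+}\oplus g_{+}$ times $Z$ and $f_{-}\oplus g_{-}$ times $Z$ are separately integrable since $f,g\in L^{1}$ of their marginals and the marginals of $Z\,dP$ are exactly $\mu,\nu$.

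For part (ii): given that $Z:=(f\oplus g-c)_{+}$ is the density of a coupling, \cref{le:fgIntegrable} immediately gives $Z\in\cZ$ and $(f,g)\in L^{1}(\mu)\times L^{1}(\nu)$, which is (a). Then by the equality case in part (i), for this particular pair the inequality in (i) is an equality, so
\begin{align*}
  \int\big(cZ+\tfrac12 Z^{2}\big)\,dP \;=\; \int f\,d\mu+\int g\,d\nu-\tfrac12\int(f\oplus g-c)_{+}^{2}\,dP \;\le\;\sD\;\le\;\sP\;\le\;\int\big(cZ+\tfrac12 Z^{2}\big)\,dP,
\end{align*}
where the last inequality is just that $Z\in\cZ$ is admissible for the primal. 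Hence all quantities coincide: this gives (b) $\sP=\sD$, and reading off the ends gives that $Z$ attains $\sP$ (part (c)) and $(f,g)$ attains $\sD$ (part (d)).

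For part (iii): assume $\sP=\sD$ and that $(f,g)\in L^{1}(\mu)\times L^{1}(\nu)$ is dual-optimal. Let $Z_{*}$ be the primal optimizer from \cref{th:main}(ii), which exists and lies in $\cZ$. Apply the inequality of part (i) to the pair $(Z_{*},f,g)$: the left side equals $\sP$ (up to the additive constant, or directly $\int cZ_{*}+\tfrac12 Z_{*}^{2}\,dP=\sP$) and the right side equals $\sD=\sP$ by dual optimality and strong duality. So the inequality in (i) holds with equality for $(Z_{*},f,g)$, and the equality criterion forces $Z_{*}=(f\oplus g-c)_{+}$ $P$-a.s. In particular $(f\oplus g-c)_{+}=Z_{*}\in\cZ$ and is primal-optimal, as claimed. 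I do not expect a serious obstacle here; the only thing to be a touch careful about is keeping the additive constant $-\tfrac12\|c\|_{L^{2}(P)}^{2}$ from \eqref{eq:primalProj} straight, or simply working throughout with the functional $\int(cZ+\tfrac12 Z^{2})\,dP$ and its value $\sP$ rather than with the projection form. The genuine content of the whole lemma is the single-line convexity fact in part (i) together with the marginal-swapping identity; everything else is bookkeeping.
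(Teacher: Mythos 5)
Your proposal is correct and follows essentially the same route as the paper: the pointwise optimization of the quadratic $z\mapsto az+\tfrac12 z^2$ over $z\ge 0$ (the paper phrases it as the maximum of $az-\tfrac12 z^2$ at $z=a_+$), the marginal-swapping identity $\int (f\oplus g)Z\,dP=\int f\,d\mu+\int g\,d\nu$, \cref{le:fgIntegrable} for part (ii)(a), and the equality criterion applied to $(Z_*,f,g)$ for part (iii). The stray ``hmm'' line in part (i) should of course be deleted in a final write-up, but the subsequent corrected inequality is exactly right.
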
 

\begin{proof}
Consider $Z\in\cZ$ and $(f,g)\in L^{1}(\mu)\times L^{1}(\nu)$. Then
\begin{align*}
  \int \left(cZ + \tfrac12 Z^{2}\right)  \,dP
  = \int f\oplus g \,d(\mu\otimes\nu) - \int\left((f\oplus g-c)Z -\tfrac12 Z^{2}\right)  \,dP
\end{align*}
as $Z\,dP\in \Pi(\mu,\nu)$.
Note that $[0,\infty)\ni z\mapsto az-z^{2}/2$ has a unique maximum at $z=a_{+}$ with maximum value $a_{+}^{2}/2$. Using this pointwise with $a=f\oplus g-c$, we deduce
\begin{align*}
  \int \left(cZ + \tfrac12 Z^{2}\right)  \,dP
  \geq \int f\oplus g \,d(\mu\otimes\nu) - \frac12 \int (f\oplus g - c)_{+}^{2} \, dP
\end{align*} 
with equality holding if and only if  $Z=(f\oplus g-c)_{+}$ $P$-a.s. This shows~(i). In view of the automatic integrability shown in \cref{le:fgIntegrable}, (ii) follows from~(i).  To see (iii), consider $Z:=Z_{*}\in\cZ$ in the left-hand side of in~(i) and dual optimizers $(f,g)$ on the right-hand side. As $\sP=\sD$ was assumed, the assertion of~(i) on equality implies $Z_{*}=(f\oplus g-c)_{+}$ $P$-a.s.
\end{proof} 

\subsection{Construction of Potentials}

In view of \cref{le:weakDuality}, our main task is to construct measurable functions $f:\X\to\R$ and $g:\Y\to\R$ such that $(f\oplus g - c)_{+}\in\cZ$. More precisely, we shall show directly that the optimal density $Z_{*}$ is of that form. 

From a convex programming point of view, the marginal constraints $\mu,\nu$ in the primal problem~\eqref{eq:primal} correspond to infinitely many equality constraints; namely, $\int \phi \,d\pi=\int \phi \,d\mu$ whenever $\phi$ is a bounded measurable function on $\X$, and similarly for~$\nu$. As the spaces $\X,\Y$ are separable, countably many test functions~$\phi$ are sufficient to encode the marginals. Our plan is to approximate the primal problem~\eqref{eq:primal} with auxiliary problems having finitely many constraints (that can be solved by elementary arguments) and then pass to the limit (which is more delicate).

The problems with finitely many constraints are described in the next lemma. We emphasize that $\cZ_{n}$ consists of densities of measures that are not necessarily probability measures. 

\begin{lemma}\label{le:finiteDimProblem}
  Fix $n\in\N$ and bounded measurable functions $\phi_{i}: \X\times\Y\to \R$, $1\leq i\leq n$ with $\int \phi_{i}\,d(\mu\otimes\nu)=0$.
   Let
  $$
    \cZ_{n}=\left\{Z\in L^{2}(P): Z\geq0, \, \int Z \,dP=1, \, \int \phi_{i}Z \,dP=0,\,  1\leq i\leq n\right\}.
  $$
  There is a unique solution $Z_{n}\in \cZ_{n}$ to
  \begin{align}\label{eq:primalfinitelyManyConstraints}
  \inf_{Z\in\cZ_{n}} \int cZ\,dP + \frac12 \|Z\|^{2}_{L^{2}(P)}
  \end{align}
  and $Z_{n}$ is characterized within $\cZ_{n}$ by being of the form
  \begin{equation}\label{eq:finitelyManyConstraintsForm}
    Z_{n}= (b_{0} + b_{1}\phi_{1}+\dots +b_{n}\phi_{n} -c)_{+}\qquad \mbox{for some $b_{i}\in\R$, } 0\leq i\leq n.
  \end{equation}
\end{lemma}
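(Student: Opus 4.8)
The plan is to recognize \eqref{eq:primalfinitelyManyConstraints} as a projection problem in the Hilbert space $L^{2}(P)$ over the convex set $\cZ_{n}$, and then to extract the explicit form of the minimizer from a Lagrangian / first-order optimality analysis, exploiting that the structure of the problem is exactly that studied in \cref{th:main}(ii) but with finitely many linear constraints.

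First I would rewrite the objective: as in the proof of \cref{th:main}(ii), the functional $Z\mapsto\int cZ\,dP+\tfrac12\|Z\|_{L^{2}(P)}^{2}$ equals $\tfrac12\|Z+c\|_{L^{2}(P)}^{2}-\tfrac12\|c\|_{L^{2}(P)}^{2}$, so \eqref{eq:primalfinitelyManyConstraints} is the problem of projecting $-c$ onto $\cZ_{n}$. The set $\cZ_{n}$ is convex; it is nonempty because $d(\mu\otimes\nu)/dP\in\cZ_{n}$ (the constraints $\int\phi_{i}\,d(\mu\otimes\nu)=0$ are exactly the requirement that this density lie in $\cZ_{n}$, using \eqref{eq:refMeasureCond} for square-integrability); and it is closed in $L^{2}(P)$ since each linear constraint $Z\mapsto\int\phi_{i}Z\,dP$ and $Z\mapsto\int Z\,dP$ is continuous (the $\phi_{i}$ are bounded, hence in $L^{2}(P)$ as $P$ is a probability measure), and the cone $\{Z\geq0\}$ is closed. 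Hence the Hilbert-space projection theorem gives a unique minimizer $Z_{n}$, characterized by the variational inequality $\langle Z_{n}+c,\,Z-Z_{n}\rangle_{L^{2}(P)}\geq0$ for all $Z\in\cZ_{n}$.

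Next I would convert this variational inequality into the pointwise form \eqref{eq:finitelyManyConstraintsForm}. The idea is standard Lagrangian duality / KKT for convex programs with finitely many affine constraints plus the pointwise inequality $Z\geq0$. Since there are only $n+1$ affine equality constraints and the problem is a strictly convex quadratic minimization, the Lagrangian $L(Z,b)=\int(cZ+\tfrac12 Z^{2})\,dP - b_{0}(\int Z\,dP-1) - \sum_{i=1}^{n}b_{i}\int\phi_{i}Z\,dP$ has, for the optimal multipliers $b=(b_{0},\dots,b_{n})\in\R^{n+1}$, the property that $Z_{n}$ minimizes $Z\mapsto L(Z,b)$ over the single remaining constraint $Z\geq0$. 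That pointwise minimization decouples: for $P$-a.e.\ $(x,y)$ one minimizes $z\mapsto c z+\tfrac12 z^{2} - \big(b_{0}+\sum b_{i}\phi_{i}(x,y)\big)z$ over $z\geq0$, whose unique minimizer is $z=\big(b_{0}+\sum_{i}b_{i}\phi_{i}-c\big)_{+}$, giving \eqref{eq:finitelyManyConstraintsForm}. Conversely, any $Z$ of this form that happens to lie in $\cZ_{n}$ satisfies the variational inequality (one checks $\langle Z+c,\,W-Z\rangle_{L^{2}(P)}\ge 0$ for $W\in\cZ_n$ by the same pointwise convexity argument, using that $b_{0}+\sum b_{i}\phi_{i}$ is $P$-a.s.\ orthogonal to $W-Z$ because both lie in $\cZ_n$), hence is the minimizer; this gives the ``characterized within $\cZ_{n}$'' clause and in particular uniqueness of the form.

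The main obstacle is justifying the existence of the Lagrange multipliers $b_{i}$, i.e.\ that strong Lagrangian duality holds with a multiplier vector attained in $\R^{n+1}$. Here one needs a constraint-qualification-type argument: the simplest route is to invoke that $\cZ_{n}$ is a slice of $L^{2}(P)$ cut out by finitely many continuous affine functionals together with the closed convex cone $\{Z\ge 0\}$, and that the projection $Z_{n}$, being a minimizer, admits by standard convex duality (e.g.\ via the Fenchel--Rockafellar theorem, or by directly analyzing the normal cone of $\cZ_{n}$ at $Z_{n}$) a representation of its subgradient as a linear combination of the constraint gradients $1,\phi_{1},\dots,\phi_{n}$ plus an element of the normal cone of $\{Z\geq0\}$. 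I would present this via the normal cone: the optimality condition $-(Z_{n}+c)\in N_{\cZ_{n}}(Z_{n})$ and the fact that $N_{\cZ_{n}}(Z_{n}) = \linspan\{1,\phi_{1},\dots,\phi_{n}\} + N_{\{Z\ge0\}}(Z_{n})$ (valid because the affine part has finite codimension so no delicate closure issue arises), and then $N_{\{Z\ge0\}}(Z_{n})$ consists of functions $\le 0$ that vanish where $Z_{n}>0$; unwinding this pointwise yields exactly \eqref{eq:finitelyManyConstraintsForm}. A fully self-contained alternative, which I would use if I wanted to avoid citing infinite-dimensional convex-analysis machinery, is to first solve the problem over the finite-dimensional section obtained by replacing $\cZ_n$ with $\{Z=(b_0+\sum b_i\phi_i - c)_+\}$ and checking the resulting finite-dimensional concave dual in the variables $b$ has a maximizer whose primal reconstruction lies in $\cZ_n$ — but the normal-cone argument is cleaner.
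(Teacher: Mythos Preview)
Your existence/uniqueness via projection and the converse direction (any $Z\in\cZ_n$ of the stated form is the minimizer, by the pointwise inequality $az-\tfrac12 z^2\le \tfrac12 a_+^2$) match the paper. The gap is in producing the multipliers $b_i$. You invoke the sum rule $N_{\cZ_n}(Z_n)=\linspan\{1,\phi_1,\dots,\phi_n\}+N_{\{Z\ge0\}}(Z_n)$, justified by ``the affine part has finite codimension so no delicate closure issue arises.'' Closedness of the sum is indeed automatic (finite-dimensional plus closed), but that is not what is at stake: the sum rule for normal cones of an intersection requires a constraint qualification, and in $L^2(P)$ the cone $\{Z\ge0\}$ has empty interior whenever $P$ is non-atomic, so the usual Slater-type conditions are unavailable. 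Finite codimension of the affine part does not by itself supply a qualification; your justification is incomplete.

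The paper takes exactly the route you list as an ``alternative'' and set aside: it minimizes the finite-dimensional dual $G(b_0,b)=-b_0+\tfrac12\int(b_0+b\cdot\Phi-c)_+^2\,dP$ directly. The nontrivial step is coercivity. After projecting out the kernel so that $b\cdot\Phi=0$ $P$-a.s.\ only for $b=0$, one shows $P\{b\cdot\Phi>0\}>0$ for every $b\ne0$: otherwise $b\cdot\Phi\le0$ $P$-a.s.\ and is not $P$-a.s.\ zero, hence (since $P\sim\mu\otimes\nu$) $\int b\cdot\Phi\,d(\mu\otimes\nu)<0$, contradicting $\int\phi_i\,d(\mu\otimes\nu)=0$. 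This gives radial coercivity of the convex continuous function $G$ on $\R^{n+1}$, hence a minimizer $\boldsymbol{b}^*$; differentiating under the integral (legitimate since $\Phi$ is bounded and $z\mapsto z_+^2$ is $C^1$), the first-order conditions read $\int(b_0^*+b^*\cdot\Phi-c)_+\,dP=1$ and $\int\phi_i(b_0^*+b^*\cdot\Phi-c)_+\,dP=0$, i.e., $(b_0^*+b^*\cdot\Phi-c)_+\in\cZ_n$. In effect, this coercivity argument \emph{is} the constraint-qualification verification your normal-cone route would need, so the ``less clean'' alternative is where the actual content lies.
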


\begin{proof}
  As $\cZ_{n}\subset L^{2}(P)$ is convex, closed and nonempty, existence and uniqueness of the minimizer $Z_{n}=\argmin_{Z\in\cZ_{n}} \|Z+c\|_{L^{2}(P)}$ follow by Hilbert space projection as in the proof of \cref{th:main}\,(ii). Next, we argue as in the proof of \cref{le:weakDuality}\,(i), with $f\oplus g$ replaced by $b_{0}+b\cdot \Phi$ where $b_{0}\in\R$ and  $b=(b_{1},\dots,b_{n})\in\R^{n}$ and $\Phi=(\phi_{1},\dots,\phi_{n})$. Noting that $\int(b_{0}+b\cdot \Phi) \,d(\mu\otimes\nu)=b_{0}$, we obtain
\begin{align*}
  \int \left(cZ + \tfrac12 Z^{2}\right)  \,dP
  \geq b_{0} - \frac12 \int (b_{0}+ b\cdot \Phi-c)_{+}^{2} \, dP,
\end{align*} 
with equality holding if and only if $Z=(b_{0}+b\cdot \Phi-c)_{+}$ $P$-a.s.\ for some $b_{0}\in\R$ and $b\in\R^{n}$. 
As a result, we only need to prove that there exists $Z\in\cZ_{n}$ of the form $Z=(b_{0}+b\cdot \Phi-c)_{+}$.
To that end, we first show that the problem
\begin{equation*}
  \inf_{(b_{0},b)\in\R^{n+1}} G(b_{0},b), \qquad G(b_{0},b):= -b_{0} + \frac12\int (b_{0} + b\cdot\Phi - c)_{+}^{2}\,dP
\end{equation*}
admits a minimizer~$\boldsymbol{b}^{*}=(b_{0}^{*},b^{*})$. Note that $G:\R^{n+1}\to\R$ is convex and continuous. 
By projecting onto the orthogonal complement of $\{b\in\R^{n}: b\cdot\Phi=0~P\as\}$, we can reduce to a situation where $b\cdot\Phi=0$ $P$-a.s.\ only for $b=0$. For $b\neq0$ we then have $P\{b\cdot\Phi\neq0\}>0$.

We claim that this implies $P\{b\cdot\Phi>0\}>0$. Indeed, if not, then $b\cdot\Phi\leq0$ $P$-a.s.\ and $P\{b\cdot\Phi<0\}>0$. As 
$\mu\otimes\nu\sim P$, 
it follows that $b\cdot\Phi\leq0$ $(\mu\otimes\nu)$-a.s.\ and
$(\mu\otimes\nu)\{b\cdot\Phi<0\}>0$. Thus $\int b\cdot\Phi \, d(\mu\otimes\nu)<0$, contradicting that $\int b\cdot\Phi \, d(\mu\otimes\nu)=0$. 

Using that $P\{b\cdot\Phi>0\}>0$ for any $b\neq0$, we verify the radial coercivity condition 
$$
  \lim_{\alpha\to\infty} G(\alpha \boldsymbol{b})=\infty, \qquad  0\neq \boldsymbol{b}\in\R^{n+1}. 
$$
Any convex, lower semicontinuous function $G:\R^{n+1}\to\R$ satisfying this condition attains its minimum \cite[Lemma~3.5, p.\,126]{FollmerSchied.11}.
  
   Let~$\boldsymbol{b}^{*}=(b_{0}^{*},b^{*})$ be a minimizer. Note that $z\mapsto z_{+}^{2}$ is continuously differentiable with derivative $2z_{+}$, and recall that $\Phi$ is bounded. Differentiation with respect to $b_{i}$ under the integral yields the first-order conditions 
  $$
   \int (b^{*}_{0}+b^{*}\cdot\Phi-c)_{+}\,dP=1, \qquad \int \phi_{i} (b^{*}_{0}+b^{*}\cdot\Phi-c)_{+}\,dP=0, \quad 1\leq i \leq n.
  $$
  This shows that $(b^{*}_{0}+b^{*}\cdot\Phi-c)_{+} \in \cZ_{n}$, as desired.
\end{proof}

\begin{lemma}\label{le:approxSeq}
  Let $Z_{*}\in\cZ$ be the primal optimizer. There exist bounded measurable functions $f_{n}:\X\to\R$ and $g_{n}:\Y\to\R$, $n\geq1$ such that
  \begin{align*}
    Z_{*} = \lim_{n\to\infty} (f_{n}\oplus g_{n} - c)_{+}\qquad\mbox{$P$-a.s.\ and in $L^{2}(P)$.}
  \end{align*} 
\end{lemma}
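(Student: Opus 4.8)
The goal is to realize the primal optimizer $Z_*$ as an $L^2(P)$-limit of densities $Z_n=(f_n\oplus g_n-c)_+$ coming from finite-dimensional problems. The construction has two stages: first set up the approximating problems of \cref{le:finiteDimProblem} with a suitable countable family of test functions encoding the marginals, deduce $Z_n\to Z_*$ from convergence of the feasible sets, and second — the delicate part — rewrite the ``affine combination of test functions'' appearing in \cref{le:finiteDimProblem} in the separated form $f_n\oplus g_n$.

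\textbf{Step 1: choice of test functions and convergence $Z_n\to Z_*$.} Since $\X,\Y$ are Polish, their Borel $\sigma$-fields are countably generated, so there are countable families $(\varphi_j)_{j\ge1}$ of bounded measurable functions on $\X$ and $(\psi_j)_{j\ge1}$ on $\Y$ that separate points of $\cP(\X)$, resp.\ $\cP(\Y)$; after centering we may assume $\int\varphi_j\,d\mu=\int\psi_j\,d\nu=0$. For each $n$, take $\{\phi_i\}_{i=1}^{2n}$ to be $\{\varphi_1,\dots,\varphi_n,\psi_1,\dots,\psi_n\}$ viewed as functions on $\X\times\Y$ (the $\varphi_j$ depending only on $x$, the $\psi_j$ only on $y$); these satisfy $\int\phi_i\,d(\mu\otimes\nu)=0$. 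Let $Z_n\in\cZ_n$ be the minimizer from \cref{le:finiteDimProblem}. The feasible sets are nested-decreasing in the sense that $\cZ\subset\cZ_{n+1}\subset\cZ_n$ (every coupling density satisfies all the constraints), and $\bigcap_n\cZ_n=\cZ$ by the separating property of the test functions together with the fact that any $Z\in L^2(P)$ with $Z\ge0$, $\int Z\,dP=1$ and $\int\phi_i Z\,dP=0$ for all $i$ has both marginals equal to $\mu$ and $\nu$. Writing the objective as $\tfrac12\|Z+c\|^2_{L^2(P)}-\tfrac12\|c\|^2_{L^2(P)}$ as in \eqref{eq:primalProj}, the $Z_n$ are the $L^2(P)$-projections of $-c$ onto the decreasing closed convex sets $\cZ_n$, and $Z_*$ is the projection onto $\cZ=\bigcap_n\cZ_n$; a standard fact about projections onto a decreasing sequence of closed convex sets with nonempty intersection gives $Z_n\to Z_*$ in $L^2(P)$ (hence, along a subsequence, $P$-a.s.; by passing to a subsequence we may take $P$-a.s.\ convergence along the full sequence for bookkeeping, or simply keep $L^2$ and extract later).

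\textbf{Step 2: separating the representation.} By \eqref{eq:finitelyManyConstraintsForm}, $Z_n=(b_0^{(n)}+\sum_{j=1}^n b_j^{(n)}\varphi_j+\sum_{j=1}^n b_{n+j}^{(n)}\psi_j-c)_+$. Grouping the $x$-dependent terms into $f_n(x):=\tfrac12 b_0^{(n)}+\sum_{j=1}^n b_j^{(n)}\varphi_j(x)$ and the $y$-dependent terms into $g_n(y):=\tfrac12 b_0^{(n)}+\sum_{j=1}^n b_{n+j}^{(n)}\psi_j(y)$, we get exactly $Z_n=(f_n\oplus g_n-c)_+$ with $f_n,g_n$ bounded measurable (finite linear combinations of bounded functions plus constants). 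This is the content of the lemma; no limiting argument is needed here because the separation is available \emph{inside} each finite-dimensional problem.

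\textbf{Main obstacle.} With this choice of test functions, Step 2 is immediate and the only real work is Step 1, specifically the claim $\bigcap_n\cZ_n=\cZ$ and the convergence $Z_n\to Z_*$. The former requires that the countable families $(\varphi_j),(\psi_j)$ genuinely pin down the marginals — one must choose them as a measure-determining class (e.g.\ indicators of a generating algebra, or a countable dense set of bounded Lipschitz functions in a compatible metric), and verify that $\int\phi_i Z\,dP=0$ for all $i$ forces $Z\,dP\in\Pi(\mu,\nu)$; this is where countable generation of the $\sigma$-fields (footnote~1) enters. The convergence $Z_n\to Z_*$ then follows from the monotone-convergence property of metric projections onto a nested decreasing sequence of closed convex sets: the sequence $\|Z_n+c\|_{L^2(P)}$ is nondecreasing and bounded above by $\|Z_*+c\|_{L^2(P)}$, the $Z_n$ form a Cauchy sequence by the parallelogram identity (the midpoint of $Z_n,Z_m$ lies in $\cZ_{n\wedge m}$), and the limit lies in $\bigcap_n\cZ_n=\cZ$ and achieves the projection distance, hence equals $Z_*$ by uniqueness. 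The subtlety flagged in the introduction — that a sequence $f_n\oplus g_n$ need not converge to something of the form $f\oplus g$ — does \emph{not} arise in proving this particular lemma, since we only claim convergence of the combined densities $Z_n$; that subtlety is deferred to the next stage of the argument (passing from \cref{le:approxSeq} to \cref{th:main}\,(iii)(a)).
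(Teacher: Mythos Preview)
Your proposal is correct and follows essentially the same route as the paper: choose countable separating families of centered test functions on $\X$ and $\Y$, apply \cref{le:finiteDimProblem} to obtain $Z_n=(f_n\oplus g_n-c)_+$ with $f_n,g_n$ finite linear combinations of those test functions, and prove $Z_n\to Z_*$ in $L^2(P)$ via the parallelogram law for projections onto the nested closed convex sets $\cZ_n\downarrow\cZ$. Your observation that $\|Z_n+c\|$ is \emph{nondecreasing} (bounded above by $\|Z_*+c\|$) is in fact the correct direction; the remaining $P$-a.s.\ convergence is obtained by passing to a subsequence, as you note.
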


\begin{proof}
As $\X$ is Polish, there are (countably many) bounded measurable functions $\phi^{\mu}_{k}: \X\to\R$, $k\geq1$ such that $\rho\in\cP(\X\times\Y)$ satisfies $\int \phi^{\mu}_{k}(x)\,\rho(dx,dy)=0$ for all $k\geq1$ if and only if the first marginal of $\rho$ is~$\mu$ (cf.\ \cref{rk:minimalAssumptions}). 
Similarly, there are functions $\phi^{\nu}_{k}: \Y\to\R$ for $\nu$. Let $\phi_{2i-1}(x,y):=\phi^{\mu}_{i}(x)$ and $\phi_{2i}(x,y):=\phi^{\nu}_{i}(y)$, so that $\rho\in\Pi(\mu,\nu)$ if and only if $\int \phi_{i}\,d\rho=0$ for all $i\geq1$. Define $Z_{n},\cZ_{n}$ as in \cref{le:finiteDimProblem} and note that $Z_{n}$ is of the desired form $Z_{n}=(f_{n}\oplus g_{n} -c)_{+}$; namely, $f_{n}$ is a linear combination of $(\phi^{\mu}_{k})_{k\leq n}$ and $g_{n}$ is a linear combination of $(\phi^{\nu}_{k})_{k\leq n}$. Thus, it suffices to show that $Z_{n}\to Z_{*}$ in $L^{2}(P)$.

Note that $\cZ_{n}, \cZ \subset L^{2}(P)$ are closed and convex, $\cZ_{n}\supset \cZ_{n+1}$, and $\cZ=\cap_{n\geq1} \cZ_{n}$. Moreover, $Z_{n}$ and $Z_{*}$ are the projections of $-c$ onto $\cZ_{n}$ and $\cZ$ in $L^{2}(P)$, respectively:
\begin{align*}
  Z_{n}=\argmin_{Z\in\cZ_{n}} \|Z+c\|, \qquad Z_{*}=\argmin_{Z\in\cZ} \|Z+c\|,
\end{align*} 
where $\|\cdot\|=\|\cdot\|_{L^{2}(P)}$.
It is a general fact of Hilbert spaces that such nested projections converge; i.e., $Z_{n}\to Z_{*}$ in $L^{2}(P)$.
One way of obtaining that fact is to use the parallelogram law for $Z_{m}+c$ and $Z_{n}+c$, giving
\[
  \tfrac14 \|Z_{m}-Z_{n}\|^{2} = \frac{\|Z_{m}+c\|^{2}}{2}+\frac{\|Z_{n}+c\|^{2}}{2} - \left\|\frac{Z_{m}+Z_{n}}{2}+c\right\|^{2}=:b_{m,n}.
\]
Note that $a_{n}:=\|Z_{n}+c\|$ is decreasing, hence convergent to some $a_{*}\in\R$. As 
$(Z_{m}+Z_{n})/2\in \cZ_{m\wedge n}$, we have $a_{m,n}:=\|\frac{Z_{m}+Z_{n}}{2}+c\|\geq a_{m}\vee a_{n}$ and hence $\limsup_{m,n\to\infty} a_{m,n}\geq a_{*}$. It follows that $\limsup_{m,n\to\infty} b_{m,n}\leq 2a_{*}^{2}/2-a_{*}^{2}=0$, hence $(Z_{n})$ is a Cauchy sequence in $L^{2}(P)$. Its limit $Z$ must lie in $\cZ$ and then $\|Z+c\|=\lim_{n} \|Z_{n}+c\| \leq \|Z_{*}+c\|$ shows $Z=Z_{*}$, where the inequality is due to $\cZ_{n}\supset \cZ$.
\end{proof}

To pass to the limit of $f_{n}\oplus g_{n}$, we shall use the following result. 

\begin{lemma}\label{le:limitDecomp}
  Consider two sequences $f_{n}:\X\to\R$ and $g_{n}:\Y\to\R$, $n\geq1$ of measurable functions, and $\pi\in\cP(\mu,\nu)$. Suppose that
  \[
    \lim_{n\to\infty} f_{n}\oplus g_{n} = h \quad \pi\as
  \]
  for a measurable function $h:\X\times\Y\to\R$. Then there are functions $f:\X\to\R$ and $g:\Y\to\R$ such that $h=f\oplus g$ $\pi$-a.s. 
If $\pi\ll P$, the functions $f,g$ can be chosen to be measurable.
\end{lemma}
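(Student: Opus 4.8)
The plan is to exploit the disintegration of $\pi$ together with the elementary observation that the \emph{difference} of two functions of the same single variable is again a function of that variable; so $\pi$-a.e.\ convergence of $f_n\oplus g_n$ should force a rigid "cocycle" structure on the limit~$h$, from which $f$ and $g$ can be recovered in the spirit of \cref{le:conn}.

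First I would fix a disintegration $\pi(dx,dy)=\mu(dx)\,\pi_x(dy)$ (and, symmetrically, $\pi(dx,dy)=\nu(dy)\,\pi^y(dx)$), and let $G\subset\X\times\Y$ be a measurable set with $\pi(G)=1$ on which $f_n\oplus g_n\to h$ pointwise; by the disintegration theorem, for $\mu$-a.e.\ $x$ the section $G_x=\{y:(x,y)\in G\}$ satisfies $\pi_x(G_x)=1$. Now introduce the auxiliary probability measure $Q(dx,dy,dy'):=\mu(dx)\,\pi_x(dy)\,\pi_x(dy')$ on $\X\times\Y\times\Y$. Then $Q$-a.e.\ both $(x,y)$ and $(x,y')$ lie in~$G$, and subtracting the two convergences gives $g_n(y)-g_n(y')\to h(x,y)-h(x,y')$ $Q$-a.s. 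Since $g_n(y)-g_n(y')$ does not depend on~$x$, its $Q$-a.s.\ limit is measurable with respect to the $Q$-completion of the $\sigma$-field generated by $(y,y')$, hence agrees $Q$-a.s.\ with a measurable function $\delta:\Y\times\Y\to\R$ of $(y,y')$ only; that is, $h(x,y)-h(x,y')=\delta(y,y')$ $Q$-a.s. Passing to the analogous four-fold product $\mu(dx)\,\pi_x(dy)\,\pi_x(dy')\,\pi_x(dy'')$ shows that $\delta$ is an additive cocycle ($\delta(y,y')+\delta(y',y'')=\delta(y,y'')$ and $\delta(y,y')=-\delta(y',y)$, a.s.), and by symmetry one obtains a cocycle $\gamma$ on $\X\times\X$ with $h(x,y)-h(x',y)=\gamma(x,x')$ a.s. No integrability of $f_n,g_n$ or of~$h$ is used in this part.

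The remaining task — synthesizing honest functions $f$ and $g$ from the purely "differential" data $\delta,\gamma$ — is the heart of the matter and the step I expect to be the main obstacle. The idea is the measure-theoretic counterpart of \cref{le:conn}: decompose (a conveniently chosen version of)~$G$ into its components in the sense of \cref{de:connected}, and on each component pin $f$ and $g$ down up to an additive constant by chaining the relation $h(x,y)-h(x,y')=\delta(y,y')$ along a measurable selection of a reference point, the cocycle property ensuring that the chaining through a staircase path as in~\eqref{eq:path} is consistent; across distinct components the constants may be chosen independently, and since $\{\proj_{\X}C\}$ and $\{\proj_{\Y}C\}$ partition the relevant sets (cf.\ \cref{rk:partition}), the pieces glue to functions $f:\X\to\R$, $g:\Y\to\R$ with $h=f\oplus g$ $\pi$-a.s. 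Two points are delicate: carrying out the component decomposition and the reference selection \emph{measurably}, and avoiding any appeal to the (a priori non-integrable) quantity $\int h(x,\cdot)\,d\pi_x$ — this last issue is handled by averaging $\delta$ and~$h$ only over locally bounded normalizations of the fibers. This is exactly the subtlety for which the Schr\"odinger-bridge arguments of \cite{FollmerGantert.97, RuschendorfThomsen.97} serve as a template. Finally, when $\pi\ll P$ one may reduce (using the countably generated $\sigma$-field assumption) to standard Borel spaces with regular disintegration kernels, so that $\delta$, the component decomposition, and the selection are all genuinely measurable, and therefore the resulting $f$ and $g$ are measurable.
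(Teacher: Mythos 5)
Your first part — passing to the auxiliary measure $Q(dx,dy,dy'):=\mu(dx)\,\pi_x(dy)\,\pi_x(dy')$ to show that the "differences'' $g_n(y)-g_n(y')$ converge and that the limit $\delta(y,y')=h(x,y)-h(x,y')$ is a function of $(y,y')$ only, together with the cocycle identities obtained from triple products — is a correct and slightly different packaging of what the paper (following \cite{RuschendorfThomsen.97, FollmerGantert.97}; cf.\ also the proof of \cref{le:limitDecompSelf}) does by working directly with the sections $S_x$ of the convergence set. The paper's Step~1 establishes that distinct sections $S_x$, $S_{x'}$ either coincide or are disjoint, which encodes exactly the same rigidity you are extracting via the cocycle $\delta$; the subsequent component decomposition $C_\lambda = A_\lambda\times B_\lambda$ plays the role of your "components'' indexed by the cocycle's domain. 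So the first step is fine, if somewhat more elaborate than necessary.

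The genuine gap is in your final paragraph, and it concerns the role of $\pi\ll P$. You assert that under $\pi\ll P$ one can "reduce to standard Borel spaces with regular disintegration kernels, so that $\delta$, the component decomposition, and the selection are all genuinely measurable.'' This is not the mechanism and, as stated, it would be wrong: regularity of the disintegration and standardness of the spaces do not make the component decomposition or the selection of base points $x_\lambda$ measurable. Indeed, the paper's \cref{rk:limitDecomp} explicitly recalls a counterexample of Gantert (reported in \cite{RuschendorfThomsen.97}) showing that a limit of the form $f\oplus g$ may exist with \emph{no} measurable choice of $f,g$ at all — and that phenomenon already occurs in perfectly nice Borel settings. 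The obstruction is that there can be uncountably many components, forcing uncountably many independent normalizations, and an uncountable collection of constants cannot in general be assembled into a measurable function. What $\pi\ll P=\tmu\otimes\tnu$ actually buys is that the disintegration kernels satisfy $\kappa(x,\cdot)\ll\tnu$ for $\mu$-a.e.\ $x$; hence each non-negligible section $S_{x_\lambda}$ has $\tnu(S_{x_\lambda})>0$, and since these sections are pairwise disjoint, at most \emph{countably many} components can carry mass. One can then throw away the $\tmu$-null and $\tnu$-null remainder, perform a countable number of normalizations, and obtain $f,g$ as countable sums of measurable pieces, hence measurable. This is precisely Step~4 in the proof of \cref{le:limitDecompSelf}. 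Your proposal identifies (correctly) that the reconstruction step is "the heart of the matter,'' but it then waves at the wrong resolution; you would need to replace the "reduce to standard Borel spaces'' reasoning with the counting-of-components argument.

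A secondary, smaller point worth noting: the cocycle $\delta$ and the relations $\delta(y,y')+\delta(y',y'')=\delta(y,y'')$ hold only a.s.\ with respect to marginals of the fiber-products $\mu(dx)\,\pi_x^{\otimes k}(dy_1,\dots,dy_k)$, not with respect to $\nu^{\otimes k}$. When you chain these relations along a staircase path to define $g$ on a component, you must track carefully which null sets are being discarded at each step; the section-based argument of the paper avoids this by fixing $\X_0=\{x:\kappa(x,S_x)=1\}$ once and for all. This is not a fatal flaw, but the bookkeeping is nontrivial and your sketch does not spell it out.
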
 

The first part of \cref{le:limitDecomp} is \cite[Proposition~2.1]{RuschendorfThomsen.97}, the second is \cite[Proposition~3.19]{FollmerGantert.97}. The latter assumes $\pi\ll \pi_{\X}\otimes \pi_{\Y}$, where $\pi_{\X}, \pi_{\Y}$ are the marginals of~$\pi$, which is equivalent to $\pi\ll P$.\footnote{In fact, it suffices to assume that~$\pi$ is absolutely continuous wrt.\ \emph{any} product probability measure. See Step~4 in the proof of \cref{le:limitDecompSelf} below.} We comment briefly on the proof of \cref{le:limitDecomp} in the subsequent remark, but give a detailed proof of its (more difficult) symmetric version in \cref{le:limitDecompSelf} below.

\begin{remark}\label{rk:limitDecomp}
Results like \cref{le:limitDecomp} are surprisingly subtle; the difficulty is that the convergence of $f_{n}\oplus g_{n}$ only holds on a subset $E\subset \X\times\Y$. First of all, note that convergence of $f_{n}\oplus g_{n}$ does not imply a separate convergence of $f_{n}$ and $g_{n}$. If $E=\X\times\Y$, the limits of $f_{n}$ and $g_{n}$ do exist after a single normalization (e.g., $f_{n}(x_{0})=0$ for some fixed $x_{0}\in\X$) to pin down the familiar additive constant. %
But if~$E$ is sparse (as will typically be the case in our application), we have seen in \cref{se:discrete} that $f_{n}$ can be shifted by a different constant on each component of~$E$. While \cite{RuschendorfThomsen.97,FollmerGantert.97} do not use the concept of connectedness, their proofs roughly boil down to choosing a normalization for each component of~$E$. Because there are uncountably many components in general, the measurability of the resulting function is not guaranteed. In fact, a counterexample due to N.~Gantert (reported in \cite{RuschendorfThomsen.97}) shows that even when a limit $f\oplus g$ exists, it can happen that there is no  measurable choice of~$f$ and~$g$. (See also \cite{FollmerGantert.97} for further examples.) In the last part of \cref{le:limitDecomp}, the condition $\pi\ll P$ ensures that, after removing certain nullsets from~$\X$ and~$\Y$, only countably many normalizations are necessary. See the proof of \cref{le:limitDecompSelf} for details.
\end{remark}

We can now conclude the desired result on the shape of the optimal density $Z_{*}\in\cZ$, completing the proof of \cref{th:main}\,(i)--(iii).

\begin{proposition}\label{pr:approxLimit}
  There exist measurable $f:\X\to\R$ and $g:\Y\to\R$ such that
   $Z_{*} =  (f\oplus g - c)_{+}$ $P$-a.s.
\end{proposition}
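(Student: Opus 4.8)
The plan is to construct the potentials from the finitely-constrained approximations of \cref{le:finiteDimProblem}, pass to the limit via \cref{le:limitDecomp}, and then treat separately the region where the optimal density vanishes. First I would invoke \cref{le:approxSeq} to obtain bounded measurable $f_{n}:\X\to\R$ and $g_{n}:\Y\to\R$ with $Z_{n}:=(f_{n}\oplus g_{n}-c)_{+}\to Z_{*}$ both $P$-a.s.\ and in $L^{2}(P)$. Fix a measurable version of $Z_{*}$, set $E:=\{Z_{*}>0\}$, and recall that $\pi_{*}=Z_{*}\,dP$ lies in $\Pi(\mu,\nu)$, is absolutely continuous with respect to $P$, and satisfies $\pi_{*}(E)=1$. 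On $E$ the positive parts $(f_{n}\oplus g_{n}-c)_{+}$ are $P$-a.e.\ eventually strictly positive, so $f_{n}\oplus g_{n}=Z_{n}+c\to Z_{*}+c$ $P$-a.e.\ on $E$, hence $\pi_{*}$-a.s. Applying \cref{le:limitDecomp} with $\pi=\pi_{*}$ and $h=Z_{*}+c$ produces measurable $f:\X\to\R$ and $g:\Y\to\R$ with $f\oplus g=Z_{*}+c$ $\pi_{*}$-a.s., and therefore $(f\oplus g-c)_{+}=(Z_{*})_{+}=Z_{*}$ $P$-a.e.\ on $E$.

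It then remains to show that $(f\oplus g-c)_{+}=0$ $P$-a.e.\ on $E^{c}=\{Z_{*}=0\}$, i.e.\ that $f\oplus g\le c$ there; combined with the previous step this gives exactly $Z_{*}=(f\oplus g-c)_{+}$ $P$-a.s. I expect this to be the main obstacle. The only usable input is that the $P$-a.e.\ convergence $Z_{n}\to Z_{*}$ forces $\limsup_{n}(f_{n}(x)+g_{n}(y))\le c(x,y)$ for $P$-a.e.\ $(x,y)\in E^{c}$, while on $E$ the sequence $f_{n}\oplus g_{n}$ genuinely converges. The difficulty is that $f_{n}$ and $g_{n}$ need not converge individually, even after a single normalization: on each component of $E$ in the sense of \cref{de:connected} they may drift by a different additive constant, and the decomposition furnished by \cref{le:limitDecomp} is obtained precisely by choosing a normalizing constant per component. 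When $x$ and $y$ belong (in the $\proj_{\X}$, resp.\ $\proj_{\Y}$, sense of \cref{rk:partition}) to the \emph{same} component, the two normalizations cancel, $f(x)+g(y)=\lim_{n}(f_{n}(x)+g_{n}(y))$ exists, and the bound $f(x)+g(y)\le c(x,y)$ follows immediately from the $\limsup$ estimate above. The genuinely delicate case is $x$ and $y$ in different components, where one must compare the normalizing constants across components; I would handle it by the same bookkeeping as in the (more involved) symmetric version \cref{le:limitDecompSelf}, showing that the per-component normalizations are compatible enough to preserve the constraint $f\oplus g\le c$ off $E$.

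Once $Z_{*}=(f\oplus g-c)_{+}$ $P$-a.s.\ is established, the statement is proved; the automatic integrability \cref{le:fgIntegrable} together with \cref{le:weakDuality} then dispatches the remaining parts of \cref{th:main}(i)--(iii) by routine arguments.
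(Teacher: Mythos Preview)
Your first three steps --- invoking \cref{le:approxSeq}, extracting the limit $h=Z_{*}+c$ on $E=\{Z_{*}>0\}$, and applying \cref{le:limitDecomp} to write $h=f\oplus g$ $\pi_{*}$-a.s.\ --- reproduce the paper's proof verbatim. The paper then simply writes ``the claim follows'' and does \emph{not} discuss $E^{c}$ at all; it does not regard what you call ``the main obstacle'' as an obstacle.

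You are right that something is being glossed over: \cref{le:limitDecomp} only determines $f\oplus g$ $\pi_{*}$-a.s., and a priori nothing controls $f(x)+g(y)$ when $x$ and $y$ project to different components. Your observation that the same-component case is harmless is correct, since there $f(x)+g(y)=\lim_{n}(f_{n}(x)+g_{n}(y))\le c(x,y)$ by the $\limsup$ estimate. But your proposed fix for the cross-component case does not work as stated. The bookkeeping in \cref{le:limitDecompSelf} is designed purely to enforce the \emph{symmetry} $f=g$ by pairing reflected off-diagonal blocks; it never addresses the inequality $f\oplus g\le c$ off the support, and nothing in that argument transfers. More fundamentally, the per-component normalizing shifts in the R\"uschendorf--Thomsen/F\"ollmer--Gantert construction are not free parameters you can tune --- they are the values $f_{n}(x_{\lambda})$ at fixed base points, dictated by the approximating sequence --- so ``coordinating normalizations to preserve $f\oplus g\le c$'' is not a well-posed operation, and you have supplied no mechanism for it. This is a genuine gap in your proposal; the paper's terseness at the same juncture suggests you have put your finger on a real issue rather than manufactured one.
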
 

\begin{proof}
By \cref{le:approxSeq} we have $Z_{*} = \lim_{n\to\infty} (f_{n}\oplus g_{n} - c)_{+}$ $P$-a.s. for some bounded measurable functions $f_{n},g_{n}$. As $\pi_{*}$ is concentrated on $\{Z_{*}>0\}$, we also have
\begin{align*}
 Z_{*} = \lim_{n\to\infty} (f_{n}\oplus g_{n} - c) =  \left(\lim_{n\to\infty}  f_{n}\oplus g_{n} \right) - c \qquad \pi_{*}\as,
\end{align*} 
showing that the limit $h:=Z_{*}+c=\lim_{n\to\infty} f_{n}\oplus g_{n} $ exists and is finite $\pi_{*}$-a.s. Clearly $\pi_{*}\ll P$, hence
the condition of \cref{le:limitDecomp} is satisfied and the claim follows.
\end{proof}

\begin{proof}[Proof of \cref{th:main}\,{(i)--(iii)}]
As \cref{pr:approxLimit} verifies the assumption of \cref{le:weakDuality}\,(ii), the latter establishes the strong duality $\sP=\sD$, which is part (i) of \cref{th:main}. Part (ii) is shown in \cref{se:primalEx}. Moving on to part (iii), item (a) is precisely \cref{pr:approxLimit}, and it is trivial that (a) implies (b). Assuming (b), \cref{le:fgIntegrable} yields $(f,g)\in L^{1}(\mu)\times L^{1}(\nu)$ and then \cref{le:weakDuality}\,(ii) yields (c). Finally, \cref{le:weakDuality}\,(iii) shows that (c) implies (a). This shows the equivalence of (a), (b) and (c). The equivalence of (b) and (d) was already proved (after \cref{rk:regularizationParam}).
\end{proof} 

\section{Self-Transport: Proof of \cref{th:main}\hspace{.05em}(iv)}\label{se:proofMainSelf}

In this section we adapt the above arguments to the case of self-transport; i.e., the marginals coincide and the cost is symmetric: 
\begin{align}\label{eq:symm}
   (\X,\mu,\tmu)=(\Y,\nu,\tnu) \quad\qandq\quad c(x,y)=c(y,x).
\end{align}
Note that $P=\tmu\otimes\tmu$ is then also symmetric.

In this setting we may expect that there exist symmetric potentials $(f,g)$; i.e., with $f=g$. (Of course, not all potentials will be symmetric; see, e.g., \Cref{ex:discreteCounter}.) However, the proof of \cref{le:limitDecomp} above in general does not produce symmetric potentials, \emph{even if} the approximations $f_{n},g_{n}$ are symmetric. This is due to the normalizations for $f_{n}$ that are made in the proof. These normalizations are key to obtain convergence, but break the symmetry between~$f_{n}$ and~$g_{n}$. 

Below, we first argue that the approximations can indeed be chosen to be symmetric (\cref{le:finiteDimProblemSelf} and \cref{le:approxSeqSelf}). Then, we guarantee a symmetric limit with a precise construction that avoids normalizations on certain components and coordinates the normalizations between others (\cref{le:limitDecompSelf}).

\begin{lemma}\label{le:finiteDimProblemSelf}
  Let~\eqref{eq:symm} hold. Fix $n\in\N$ and bounded measurable functions $\phi_{i}: \X\to \R$, $1\leq i\leq n$ 
  with $\int \phi_{i}\,d\mu=0$. 
  Let
  \begin{multline*}
    \cZ_{n}^{\rm sym}=\bigg\{Z\in L^{2}(P): Z\geq0, \, Z(x,y)=Z(y,x), \\
     \int Z(x,y) \,P(dx,dy)=1, \, \int \phi_{i}(x)Z(x,y) \,P(dx,dy)=0,\, 1\leq i\leq n\bigg\}.   
  \end{multline*} 
  There is a unique solution $Z_{n}\in \cZ_{n}^{\rm sym}$ to
  \begin{align}\label{eq:primalfinitelyManyConstraintsSelf}
  \inf_{Z\in\cZ_{n}^{\rm sym}} \int cZ\,dP + \frac12 \|Z\|^{2}_{L^{2}(P)}
  \end{align}
  and $Z_{n}$ is characterized within $\cZ_{n}^{\rm sym}$ by being of the form
  \begin{equation}\label{eq:finitelyManyConstraintsFormSelf}
    Z_{n}(x,y)= (b_{0}+b\cdot \Phi(x) + b\cdot \Phi(y) -c(x,y))_{+}
  \end{equation}
  for some $b_{0}\in\R$ and $b\in\R^{n}$, where $\Phi:=(\phi_{1},\dots,\phi_{n})$.
\end{lemma}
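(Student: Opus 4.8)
The plan is to follow the proof of \cref{le:finiteDimProblem} essentially verbatim, tracking the symmetry constraint throughout. First, since $\cZ_n^{\rm sym}\subset L^2(P)$ is convex, closed and nonempty---it contains the symmetric density $d(\mu\otimes\mu)/dP=(d\mu/d\tmu)(x)(d\mu/d\tmu)(y)$, which lies in $L^2(P)$ by~\eqref{eq:refMeasureCond} and satisfies the $\phi_i$-constraints because $\int\phi_i\,d\mu=0$---existence and uniqueness of the minimizer $Z_n=\argmin_{Z\in\cZ_n^{\rm sym}}\|Z+c\|_{L^2(P)}$ follow from the Hilbert space projection, exactly as in the proof of \cref{th:main}(ii). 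Writing $\psi_b(x,y):=b_0+b\cdot\Phi(x)+b\cdot\Phi(y)$ for $b_0\in\R$, $b\in\R^n$, the computation in \cref{le:weakDuality}(i) applies with $f\oplus g$ replaced by $\psi_b$: using symmetry of $Z$, $c$ and $P$ together with $\int Z\,dP=1$ and $\int\phi_i(x)Z\,dP=0$, one gets $\int\psi_b Z\,dP=b_0$, hence
\begin{align*}
  \int\Big(cZ+\tfrac12 Z^2\Big)\,dP\geq b_0-\tfrac12\int(\psi_b-c)_+^2\,dP
\end{align*}
for all $Z\in\cZ_n^{\rm sym}$, with equality iff $Z=(\psi_b-c)_+$ $P$-a.s. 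So it remains only to exhibit some $Z\in\cZ_n^{\rm sym}$ of the form $(\psi_b-c)_+$, and such a $Z$ is automatically symmetric because $\psi_b$ and $c$ are.

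To produce such a $Z$, I would minimize $G(b_0,b):=-b_0+\tfrac12\int(\psi_b-c)_+^2\,dP$ over $\R^{n+1}$; this $G$ is convex and continuous. As in the non-symmetric case, project off the subspace $\{b\in\R^n:\,b\cdot\Phi(x)+b\cdot\Phi(y)=0~P\as\}$. By Fubini this subspace equals $\{b:\,b\cdot\Phi=0~\tmu\as\}$: if the symmetrized function vanishes $P$-a.s., then for $\tmu$-a.e.\ $x$ we have $b\cdot\Phi(y)=-b\cdot\Phi(x)$ for $\tmu$-a.e.\ $y$, so $b\cdot\Phi$ is $\tmu$-a.s.\ a constant $k$ with $2k=0$. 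After this reduction, $b\cdot\Phi(x)+b\cdot\Phi(y)=0$ $P$-a.s.\ only for $b=0$, and for $b\neq0$ one deduces $P\{b\cdot\Phi(x)+b\cdot\Phi(y)>0\}>0$: otherwise the symmetrized function is $\leq0$ $P$-a.s.\ and $<0$ on a set of positive $P$-measure, hence (as $\mu\otimes\mu\sim P$) $\int(b\cdot\Phi(x)+b\cdot\Phi(y))\,d(\mu\otimes\mu)<0$, contradicting that this integral equals $2\int b\cdot\Phi\,d\mu=0$. From here the radial coercivity $\lim_{\alpha\to\infty}G(\alpha\mathbf b)=\infty$ for $\mathbf b\neq0$ is checked just as in \cref{le:finiteDimProblem}: for $b\neq0$ the quadratic term blows up on the positive-measure set $\{b\cdot\Phi(x)+b\cdot\Phi(y)>0\}$, while for $b=0$, $b_0\neq0$ the function $\alpha\mapsto-\alpha b_0+\tfrac12\int(\alpha b_0-c)_+^2\,dP$ tends to $\infty$. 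Thus \cite[Lemma~3.5, p.\,126]{FollmerSchied.11} yields a minimizer $\mathbf b^*=(b_0^*,b^*)$.

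Finally, differentiating $G$ under the integral (legitimate since $z\mapsto z_+^2$ is $C^1$ with derivative $2z_+$ and $\Phi$ is bounded) gives the first-order conditions at $\mathbf b^*$: $\int(\psi_{b^*}-c)_+\,dP=1$ from $\partial_{b_0}G=0$, and $\int(\phi_j(x)+\phi_j(y))(\psi_{b^*}-c)_+\,dP=0$ from $\partial_{b_j}G=0$; by symmetry of $P$ and of $\psi_{b^*}-c$ the latter equals $2\int\phi_j(x)(\psi_{b^*}-c)_+\,dP$, so $\int\phi_j(x)(\psi_{b^*}-c)_+\,dP=0$ for each $j$. Hence $(\psi_{b^*}-c)_+$ is nonnegative, symmetric, integrates to $1$ and kills all $\phi_i$, so it lies in $\cZ_n^{\rm sym}$ and has the required form~\eqref{eq:finitelyManyConstraintsFormSelf}; combined with the equality clause of the displayed inequality above, this identifies it with the projection $Z_n$ and establishes the characterization. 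I do not expect a genuine obstacle; the only point requiring care is the bookkeeping that the symmetrized test functions $b\cdot\Phi(x)+b\cdot\Phi(y)$ play the role of $b\cdot\Phi$ in the span and sign arguments, which the Fubini observation above handles.
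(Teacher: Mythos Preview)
Your proposal is correct and follows essentially the same approach as the paper's own proof: reduce to finding $(b_0^*,b^*)$ by minimizing the dual functional $G$ via a coercivity argument, then read off membership in $\cZ_n^{\rm sym}$ from the first-order conditions together with the symmetry of $Z_n$ and~$P$. Your extra justification that $\{b:\,b\cdot\Phi(x)+b\cdot\Phi(y)=0~P\as\}=\{b:\,b\cdot\Phi=0~\tmu\as\}$ via Fubini is a detail the paper leaves implicit (it simply writes ``we may assume that $b\cdot\Phi=0$ $\tmu$-a.s.\ only for $b=0$''), but otherwise the arguments coincide.
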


\begin{proof}
  Note that $Z\in\cZ_{n}^{\rm sym}$ necessarily satisfies $\int \phi_{i}(y)Z(x,y) \,P(dx,dy)=0$ as this is implied by $\int \phi_{i}(x)Z(x,y) \,P(dx,dy)=0$ and $Z(x,y)=Z(y,x)$. Thus, following the same arguments as in the proof of \cref{le:finiteDimProblem}, it suffices to show that there exists $Z\in\cZ_{n}^{\rm sym}$ of the form~\eqref{eq:finitelyManyConstraintsFormSelf}. 
To that end, we now show that the symmetric problem
\begin{equation*}
  \inf_{b_{0},b\in\R^{n}} G(b_{0},b), \qquad G(b_{0},b):=\int (b_{0}+b\cdot\Phi(x)+b\cdot\Phi(y) - c(x,y))_{+}^{2}\,P(dx,dy)
\end{equation*}
admits a minimizer~$(b^{*}_{0},b^{*})$. As in the proof of \cref{le:finiteDimProblem}, we may assume that $b\cdot\Phi=0$ $\tmu$-a.s.\ only for $b=0$, and then existence of an optimizer $(b^{*}_{0},b^{*})$ follows by the same coercivity argument. Set $Z_{n}(x,y):=(b^{*}_{0}+b^{*}\cdot\Phi(x)+b^{*}\cdot\Phi(y) - c(x,y))_{+}$. The first-order condition at $b^{*}_{0}$ gives $\int Z_{n}(x,y)\,P(dx,dy)=1$ as before. The first-order condition at $b^{*}$ now gives
  $$
   \int \phi_{i}(x) Z_{n}(x,y)\,P(dx,dy)+ \int \phi_{i}(y) Z_{n}(x,y)\,P(dx,dy)=0, \quad 1\leq i \leq n.
  $$
  Because $Z_{n}$ and $P$ are symmetric, both integrals must have the same value; i.e., both vanish.  This shows that $Z_{n}\in \cZ_{n}$, as desired.
\end{proof} 

\begin{lemma}\label{le:approxSeqSelf}
  Let~\eqref{eq:symm} hold. Let $Z_{*}\in\cZ$ be the primal optimizer. There exist bounded measurable functions $f_{n}:\X\to\R$, $n\geq1$ such that
  \begin{align*}
    Z_{*} = \lim_{n\to\infty} (f_{n}\oplus f_{n} - c)_{+}\qquad\mbox{$P$-a.s.\ and in $L^{2}(P)$.}
  \end{align*} 
\end{lemma}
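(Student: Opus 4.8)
The plan is to mirror the proof of \cref{le:approxSeq}, replacing \cref{le:finiteDimProblem} by its symmetric counterpart \cref{le:finiteDimProblemSelf} and keeping the same nested-projection argument, which is entirely symmetry-agnostic. First I would invoke the separability of $\X$ to obtain countably many bounded measurable test functions $\phi^{\mu}_{k}:\X\to\R$ with $\int\phi^{\mu}_{k}\,d\mu=0$ such that a probability measure $\rho$ on $\X\times\Y$ has first marginal $\mu$ if and only if $\int\phi^{\mu}_{k}(x)\,\rho(dx,dy)=0$ for all $k$ (as in \cref{rk:minimalAssumptions}). Here the key point is that in the self-transport setting a \emph{single} family $(\phi^{\mu}_{k})$ suffices: because $P=\tmu\otimes\tmu$ and $Z$ is constrained to be symmetric, the condition $\int\phi^{\mu}_{i}(x)Z(x,y)\,P(dx,dy)=0$ automatically forces the analogous condition with $\phi^{\mu}_{i}(y)$, so encoding the first marginal also encodes the second. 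Thus a symmetric $Z$ with $\int Z\,dP=1$ and $\int\phi^{\mu}_{i}(x)Z\,dP=0$ for all $i\leq n$ lies in $\cZ_{n}^{\rm sym}$, and if it satisfies these for all $i\geq1$ then $Z\,dP\in\Pi(\mu,\mu)$.

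Next I would set $\Phi_{n}:=(\phi^{\mu}_{1},\dots,\phi^{\mu}_{n})$ and let $Z_{n}\in\cZ_{n}^{\rm sym}$ be the minimizer from \cref{le:finiteDimProblemSelf}, so that $Z_{n}$ has the form $Z_{n}(x,y)=(b_{0}^{n}+b^{n}\cdot\Phi_{n}(x)+b^{n}\cdot\Phi_{n}(y)-c(x,y))_{+}$. Defining $f_{n}(x):=\tfrac{b_{0}^{n}}{2}+b^{n}\cdot\Phi_{n}(x)$, which is bounded and measurable, we get exactly $Z_{n}=(f_{n}\oplus f_{n}-c)_{+}$. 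It then remains to prove $Z_{n}\to Z_{*}$ in $L^{2}(P)$. For this I would observe that $\cZ^{\rm sym}:=\{Z\in L^{2}(P):Z\geq0,\,Z(x,y)=Z(y,x),\,Z\,dP\in\Pi(\mu,\mu)\}=\bigcap_{n}\cZ_{n}^{\rm sym}$ is closed, convex, nonempty (it contains $d(\mu\otimes\mu)/dP$, which is symmetric), and that $Z_{n}$ and the projection $Z_{*}^{\rm sym}:=\argmin_{Z\in\cZ^{\rm sym}}\|Z+c\|_{L^{2}(P)}$ are nested Hilbert-space projections of $-c$, so the parallelogram-law argument verbatim from \cref{le:approxSeq} gives $Z_{n}\to Z_{*}^{\rm sym}$ in $L^{2}(P)$.

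The one genuinely new point to nail down is that $Z_{*}^{\rm sym}=Z_{*}$, i.e.\ the symmetry constraint is not binding. This follows because $c$ and $P$ are symmetric: if $Z_{*}$ is the primal optimizer over $\cZ$, then $(x,y)\mapsto Z_{*}(y,x)$ lies in $\cZ$ and has the same objective value (both $\int cZ\,dP$ and $\|Z\|_{L^{2}(P)}^{2}$ are invariant under $(x,y)\mapsto(y,x)$ when $c,P$ are symmetric), so by uniqueness of the primal optimizer $Z_{*}$ is itself symmetric; hence $Z_{*}\in\cZ^{\rm sym}\subset\cZ$ and $Z_{*}=Z_{*}^{\rm sym}$. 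Alternatively one can note $\|Z_{*}^{\rm sym}+c\|\geq\|Z_{*}+c\|$ since $\cZ^{\rm sym}\subset\cZ$, while $Z_{*}\in\cZ^{\rm sym}$ gives the reverse, forcing equality and hence $Z_{*}^{\rm sym}=Z_{*}$ by uniqueness of the projection. Combining, $Z_{*}=\lim_{n}Z_{n}=\lim_{n}(f_{n}\oplus f_{n}-c)_{+}$ in $L^{2}(P)$, and passing to a subsequence yields the $P$-a.s.\ convergence as well. I expect the main obstacle here to be purely bookkeeping — making sure the single test-function family genuinely encodes both marginal constraints under the symmetry restriction — rather than anything analytically deep, since the hard analytic content is already contained in \cref{le:finiteDimProblemSelf} and in the nested-projection lemma reused from \cref{le:approxSeq}.
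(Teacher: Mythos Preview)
Your proposal is correct and follows essentially the same approach as the paper, which simply says ``using \cref{le:finiteDimProblemSelf} instead of \cref{le:finiteDimProblem}, the argument is analogous to \cref{le:approxSeq}.'' You have spelled out more detail than the paper does, in particular the identification $Z_{*}^{\rm sym}=Z_{*}$ via the symmetry of the primal optimizer; the paper makes this same observation separately, just before \cref{pr:approxLimitSelf}, rather than inside the proof of the lemma.
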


\begin{proof}
  Using \cref{le:finiteDimProblemSelf} instead of \cref{le:finiteDimProblem}, the argument is analogous to \cref{le:approxSeq}.
\end{proof}

The following passage to the limit $n\to\infty$ is the main step.

\begin{lemma}\label{le:limitDecompSelf}
  Let $(\X,\mu,\tmu)=(\Y,\nu,\tnu)$ and let $\pi\in\Pi(\mu,\mu)$ be symmetric; i.e., $\pi(dx,dy)=\pi(dy,dx)$. Consider a sequence $f_{n}:\X\to\R$, $n\geq1$ of measurable functions such that
  \[
    \lim_{n\to\infty} f_{n}\oplus f_{n} = h \quad \pi\as
  \]
  for a measurable function $h:\X\times\X\to\R$. Then there is a function $f:\X\to\R$ such that $h=f\oplus f$ $\pi$-a.s. 
  If $\pi\ll P$,
  then $f$ can be chosen to be measurable.
\end{lemma}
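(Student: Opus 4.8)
The plan is to exploit that, in the self-transport case, the limit $h$ is \emph{literally} symmetric---$h(x,y)=\lim_n\bigl(f_n(x)+f_n(y)\bigr)=h(y,x)$ wherever it exists---so that an averaging trick reduces the whole statement to the asymmetric decomposition already provided by \cref{le:limitDecomp}, and the only real work is a measurable selection. I would proceed in four steps.

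\textbf{Step 1 (Symmetrization).} Let $E=\{f_n\oplus f_n\to h\}$, so $\pi(E)=1$, and replace $E$ by $E\cap E^{\mathsf T}$ where $E^{\mathsf T}:=\{(x,y):(y,x)\in E\}$. Since $\pi$ is symmetric, the pushforward of $\pi$ under transposition is $\pi$, so the new $E$ still has full $\pi$-measure; it is symmetric, hence $\proj_{\X}E=\proj_{\Y}E=:S$, and $h(x,y)=h(y,x)$ on $E$.

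\textbf{Step 2 (Existence of $f$).} Apply the first part of \cref{le:limitDecomp} to obtain (possibly non-measurable) $f^{0},g^{0}:S\to\R$ with $h=f^{0}\oplus g^{0}$ $\pi$-a.s. Set $f:=\tfrac12(f^{0}+g^{0})$ on $S$ and extend by $0$. For $\pi$-a.e.\ $(x,y)$ one has $(x,y)\in E$ and, by symmetry of $\pi$, also $(y,x)\in E$; hence $f^{0}(x)+g^{0}(y)=h(x,y)$ and $f^{0}(y)+g^{0}(x)=h(y,x)$, and adding these and using $h(x,y)=h(y,x)$ gives $f(x)+f(y)=h(x,y)$ $\pi$-a.s. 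This proves the first assertion.

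\textbf{Step 3 (Measurability).} Assume now $\pi\ll\tmu\otimes\tmu$. By the identity of Step 2 it suffices to produce \emph{measurable} $f^{0},g^{0}$ with $h=f^{0}\oplus g^{0}$ $\pi$-a.s.; averaging then gives a measurable, symmetric $f$. To construct $f^{0},g^{0}$ I would carry out the ``one normalization per component'' scheme underlying \cref{le:limitDecomp}, making each choice measurable. Disintegrate $\pi(dx,dy)=\mu(dx)\,\pi^{x}(dy)$ with $\pi^{x}\ll\tmu$, and write $E_{x}:=\{y:(x,y)\in E\}$; since $\pi(E)=1$, $\pi^{x}$ is concentrated on $E_{x}$ for $\mu$-a.e.\ $x$, and by symmetry of $E$ also $\{x:(x,y)\in E\}=E_{y}$. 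The decisive point, which is exactly where $\pi\ll P$ enters, is that $\tmu(E_{x})>0$ for $\mu$-a.e.\ $x$: if the exceptional set $A$ had $\mu(A)>0$, then $\pi^{x}(E_{x})=0$ for $x\in A$ by $\pi^{x}\ll\tmu$, whence $\pi(A\times\Y)=0$, contradicting that $\pi$ has marginal $\mu$. One then puts $x\sim x'$ iff $\tmu(E_{x}\cap E_{x'})>0$, observes that the $\sim$-neighborhoods have positive $\tmu$-mass (because $\int\pi^{x'}(E_{x})\,\mu(dx')=\mu(E_{x})>0$), and deduces that the generated equivalence relation has, after deleting a $\tmu$-nullset, only countably many classes, each $\tmu$-measurable; moreover across any $\sim$-link the difference $f_n(x)-f_n(x')$ converges, so on each class $f_n$ stabilizes up to one additive constant. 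A measurable choice of that constant on each of the countably many classes yields measurable $f^{0},g^{0}$ with $h=f^{0}\oplus g^{0}$ $\pi$-a.s.

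\textbf{Step 4 (General product reference measure).} If $\pi$ is merely absolutely continuous with respect to \emph{some} product probability measure $\rho_{\X}\otimes\rho_{\Y}$, then $\mu\ll\rho_{\X}$ and $\mu\ll\rho_{\Y}$ because $\pi$ has marginals $\mu$; running Step 3 with the symmetrized measure $\bar\rho\otimes\bar\rho$, $\bar\rho:=\tfrac12(\rho_{\X}+\rho_{\Y})$, in place of $\tmu\otimes\tmu$ gives the claim, since Step 3 used only that the reference product measure dominates $\pi$ and that $\mu$ is absolutely continuous with respect to its factors. The routine parts are Steps 1, 2 and 4; the substance, and the main obstacle, is the measurable selection in Step 3---the passage from the (generally uncountable) family of components of $E$ to a countable measurable partition on which $f_n$ genuinely converges, together with the measurable choice of one normalizing constant per class. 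This is precisely the phenomenon flagged in \cref{rk:limitDecomp}, where Gantert's counterexample shows that it fails without the hypothesis $\pi\ll P$.
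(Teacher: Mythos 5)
Your proof is correct, and it takes a genuinely different --- and notably shorter --- route than the paper. The paper's proof constructs the symmetric potential $f$ directly by a bespoke decomposition of the convergence set $S$ into components, distinguishing \emph{diagonal} components (squares $A_\lambda\times A_\lambda$, where $f_n$ converges pointwise with no normalization needed) from \emph{off-diagonal} pairs $\{C_\lambda,\hat C_\lambda\}$, and then choosing \emph{coordinated} normalizations $f'_n=f_n-f_n(x_\lambda)$ on $A_\lambda$ and $f''_n=f_n+f_n(x_\lambda)$ on $B_\lambda$ so the two pieces glue consistently. Your Step 2 sidesteps all of this: you invoke the asymmetric \cref{le:limitDecomp} to produce $(f^0,g^0)$ with $h=f^0\oplus g^0$ $\pi$-a.s., observe that $h(x,y)=h(y,x)$ on a symmetric full-$\pi$-measure set (since $h=\lim(f_n\oplus f_n)$ there), and average, $f:=\tfrac12(f^0+g^0)$. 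Intersecting the relevant full-measure sets with their transposes (which preserves full measure since $\pi$ is symmetric) makes the three ingredients $f^0(x)+g^0(y)=h(x,y)$, $f^0(y)+g^0(x)=h(y,x)$, and $h(x,y)=h(y,x)$ hold simultaneously $\pi$-a.s., and adding gives $f\oplus f=h$. This is a clean reduction and arguably the preferable proof. One inefficiency: your Step 3 reproves the measurable-selection part of \cref{le:limitDecomp} from scratch, but this is unnecessary --- when $\pi\ll P$ the second assertion of \cref{le:limitDecomp} already hands you \emph{measurable} $f^0,g^0$, and then $f=\tfrac12(f^0+g^0)$ is automatically measurable; Step 2 plus a one-line citation suffices. (Your Step 4, extending to $\pi\ll$ an arbitrary product measure, mirrors the footnote to \cref{le:limitDecomp} but is not needed for the stated lemma, where $P=\tmu\otimes\tmu$.) The paper's longer route does have one collateral benefit: it makes the ``components'' picture and the role of diagonal versus off-diagonal blocks explicit, which connects to the multiplicity discussion of \cref{se:multiplicity}; your argument buys brevity at the cost of that structural insight.
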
 

\begin{proof}
  We give the proof in four steps. Steps~1 and~4 follow \cite{FollmerGantert.97, RuschendorfThomsen.97} whereas Steps~2 and~3 deal with the particular issue of constructing a symmetric limit. In a quite different context, issues with a similar flavor recently appeared in financial mathematics \cite{NutzWieselZhao.22a}.
  
  \emph{Step~1.} Consider the measurable (by~\cite[Lemma~2.1.7, p.\,107]{Bogachev.07volI}) set
  \begin{align*}
    S= \left\{\lim_{n\to\infty} (f_{n}\oplus f_{n})\mbox{ exists in $\R$} \right\}\subset\X\times\X.
  \end{align*} 
  We denote by $S_{x}=\{y\in\X:\,(x,y)\in S\}$ and $S^{y}=\{x\in\X:\,(x,y)\in S\}$ its sections, which are also measurable. For any $x,x'\in\X$, either
  \begin{align*}
    S_{x}=S_{x'} \quad\mbox{or}\quad S_{x}\cap S_{x'}=\emptyset.
  \end{align*} 
  Indeed, suppose that there is a point $z\in S_{x}\cap S_{x'}$, and consider any $y\in S_{x}$. Then $y\in S_{x'}$ as 
  \begin{align*}
    f_{n}(x')+f_{n}(y) = [f_{n}(x')+f_{n}(z)] - [f_{n}(x) + f_{n}(z)] + [f_{n}(x) + f_{n}(y)]
  \end{align*}
  and the terms on the right-hand side converge. 
  
  Fix a disintegration $\pi(dx,dy)=\mu(dx)\otimes\kappa(x,dy)$. %
  The set $\X_{0}=\{x\in\X:\,\kappa(x,S_{x})=1\}$ is measurable and has full $\mu$-measure. For $x\in\X_{0}$ we have in particular that $S_{x}\neq\emptyset$. Moreover, any $y\in\X_{0}$ is contained in $S_{x}$ for some $x\in\X_{0}$. This follows by the symmetry of $\pi$, as otherwise $\mu(S^{y})=0$. Define the equivalence relation $\sim$ on $\X_{0}$ via
  \begin{align*}
    x\sim x' \quad\mbox{if}\quad S_{x}=S_{x'}.
  \end{align*} 
  For any $x\in\X_{0}$, let $A(x)\subset\X_{0}$ be the equivalence class of $x$. This set is measurable as it has the representation $A(x)=\{x'\in\X_{0}:\, \lim (f_{n}(x')+f_{n}(y))\mbox{ exists in $\R$}\}$ for any $y\in S_{x}$. Let $(x_{\lambda})_{\lambda\in\Lambda}$ be a (possibly uncountable) system of representatives containing exactly one member of each equivalence class.
    
  \emph{Step~2.} Define $C(x):=A(x)\times (S_{x}\cap\X_{0})$ for $x\in\X_{0}$, and write $C_{\lambda}:=C(x_{\lambda})$. Then $(C_{\lambda})_{\lambda\in\Lambda}$ is a measurable partition of $S\cap(\X_{0}\times\X_{0})$. By definition, each $C_{\lambda}$ is a measurable rectangle, denoted 
  \[
  C_{\lambda}=A_{\lambda}\times B_{\lambda}.
  \]
  Both $(A_{\lambda})_{\lambda\in\Lambda}$ and $(B_{\lambda})_{\lambda\in\Lambda}$ are measurable partitions of $\X_{0}$. In fact, by symmetry, $\{A_{\lambda}:\,\lambda\in\Lambda\}=\{B_{\lambda}:\,\lambda\in\Lambda\}$. (Indeed, $y,y'\in\X_{0}$ belong to a common set $B_{\lambda}$ iff there exists $x\in\X_{0}$ such that $y,y'\in S_{x}$, which is in turn equivalent to  $y$ and $y'$ belonging to the same equivalence class. By the symmetry, the equivalence classes are precisely the sets $A_{\lambda}$, $\lambda\in\Lambda$.)
  In the language of \cref{de:connected}, $(C_{\lambda})_{\lambda\in\Lambda}$ are the connected components of $S\cap(\X_{0}\times\X_{0})$, seen as a subset of $\X_{0}\times\X_{0}$. The geometry is special here as any two connected points can be joined by a path with length $k=1$.
  
  Define the reflection $\hat C=\{(y,x): (x,y)\in C\}$ for any $C\subset\X\times\X$. By  symmetry, $\hat C_{\lambda}= C_{\lambda'}$ for some~$\lambda'$. We need to distinguish two types of components. On the one hand, we define 
  \begin{align*}
    \Lambda_{\rm diag}=\{\lambda\in\Lambda:\,C_{\lambda}=\hat C_{\lambda}\}.
  \end{align*}
  For $\lambda\in \Lambda_{\rm diag}$, we observe that $A_{\lambda}= B_{\lambda}$; i.e.,  $C_{\lambda}=A_{\lambda}\times A_{\lambda}$ is a square. 
  
  It is useful to visualize $S\cap(\X_{0}\times\X_{0})$ as a symmetric block matrix (\cref{fig:componentsSelf}). Then $C_{\lambda}$, $\lambda\in \Lambda_{\rm diag}$ are square blocks along the diagonal. Next, we describe the off-diagonal blocks, for which there is a symmetry between the lower and upper triangle matrices.
  
    \begin{figure}[htb]
    \centering
    \includegraphics[width=.3\textwidth]{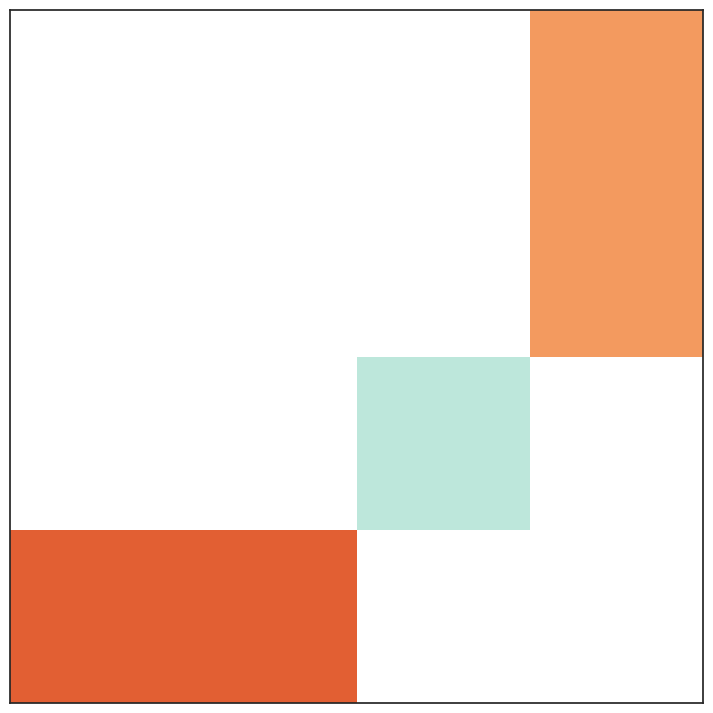}
    \caption{Example for $\X=[0,1]$ with three components (the diagonal is displayed top-left to bottom-right, as for matrices). Here $\Lambda_{\rm low}$, $\Lambda_{\rm diag}$, $\Lambda_{\rm upp}$ each have one element.}
    \label{fig:componentsSelf}
\end{figure}%

  Indeed, for each $\lambda\in \Lambda\setminus \Lambda_{\rm diag}$, there is exactly one $\lambda'\in \Lambda\setminus \Lambda_{\rm diag}$ such that $C_{\lambda'}=\hat C_{\lambda}$. To avoid redundancy, we partition $\Lambda\setminus \Lambda_{\rm diag}$ into $\Lambda_{\rm low}\cup\Lambda_{\rm upp}$ so that for each such unordered pair $\{\lambda,\lambda'\}$, one index (say $\lambda$) is in $\Lambda_{\rm low}$ and the other is in $\Lambda_{\rm upp}$.
 We note that the collections $(A_{\lambda})_{\lambda\in\Lambda_{\rm low}}$ and $(B_{\lambda})_{\lambda\in\Lambda_{\rm upp}}$ coincide. For the subsequent construction, it will be important that
 \begin{align}\label{eq:Apartition}
   (A_{\lambda})_{\lambda\in\Lambda_{\rm diag}} \cup (A_{\lambda})_{\lambda\in\Lambda_{\rm low}}\cup (B_{\lambda})_{\lambda\in\Lambda_{\rm low}} \qquad\mbox{is a partition of $\X_{0}$}.
 \end{align} 
  \emph{Step~3.} We define the function $f$ separately on each set of the partition~\eqref{eq:Apartition}.

  (i) Let $\lambda\in \Lambda_{\rm diag}$. As $C_{\lambda}=A_{\lambda}\times A_{\lambda}$ is a square, we see that $(x,y)\in C_{\lambda}$ implies $(x,x)\in C_{\lambda}$ (and similarly for $y$). The property $(x,x)\in C_{\lambda}\subset S$ means that $f_{n}(x) + f_{n}(x)$ is convergent, which of course means that $f_{n}(x)$ is convergent. In brief, $f_{n}(x)$ is convergent for all $x\in A_{\lambda}$. We thus define 
  \[
  f(x):=\lim_{n} f_{n}(x) \qforq x\in A_{\lambda}. %
  \]
  Clearly $f\oplus f= \lim_{n} (f_{n}\oplus f_{n})$ on $C_{\lambda}$.
  
  (ii) Let $\lambda\in \Lambda_{\rm low}$ and $(x,y)\in C_{\lambda}$.  In this case, $f_{n}(x_{\lambda})$ need not be convergent. Define 
  \begin{align}\label{eq:normaliz}
      f'_{n}(x):=f_{n}(x)-f_{n}(x_{\lambda}), \qquad f''_{n}(y):=f_{n}(y)+f_{n}(x_{\lambda}). 
  \end{align} 
  For $(x,y)\in C_{\lambda}$, we also have $(x_{\lambda},y)\in C_{\lambda}$, implying that $f''_{n}(y)$ and
  $
    f'_{n}(x)=[f_{n}(x)+f_{n}(y)] - [f_{n}(y)+f_{n}(x_{\lambda})]
  $ 
  are both convergent. Define 
  \begin{align*}
    f(x):=\lim_{n}f'_{n}(x) \qforq x\in A_{\lambda}, \qquad f(y):=\lim_{n}f''_{n}(y) \qforq y\in B_{\lambda}.
  \end{align*}
  These are well defined as $A_{\lambda}\cap B_{\lambda}=\emptyset$ for $\lambda\in \Lambda_{\rm low}$. Moreover,
  \begin{align*}
    f\oplus f=\lim_{n} (f'_{n}\oplus f''_{n})  = \lim_{n} (f_{n}\oplus f_{n}) \qonq C_{\lambda}.
  \end{align*}
  
  By~\eqref{eq:Apartition}, the combination of (i) and (ii) of Step~3 defines $f:\X_{0}\to\R$. Crucially, disjointness of the unions in~\eqref{eq:Apartition} ensures that there is no contradiction between our definitions of $f$ for different~$\lambda$ within (i) and (ii), and also not between (i) and (ii).
  
  We have  $f\oplus f= \lim_{n} (f_{n}\oplus f_{n})$ on $C_{\lambda}$ for $\lambda\in \Lambda_{\rm diag}\cup \Lambda_{\rm low}$. If $f\oplus f= \lim_{n} (f_{n}\oplus f_{n})$ on $C_{\lambda}$, the same holds on $\hat C_{\lambda}$. Thus, we also have $f\oplus f= \lim_{n} (f_{n}\oplus f_{n})$ on $C_{\lambda}$ for $\lambda\in\Lambda_{\rm upp}$. In summary, $f\oplus f= \lim_{n} (f_{n}\oplus f_{n})$ on $\X_{0}\times\X_{0}$. Finally, we set $f:=0$ on the $\mu$-nullset $\X\setminus\X_{0}$ and note that $\mu(\X_{0})=1$ implies $\pi(\X_{0}\times\X_{0})=1$ as $\pi\in\Pi(\mu,\mu)$. This gives the desired conclusion $f\oplus f= \lim_{n} (f_{n}\oplus f_{n})$ $\pi$-a.s. We remark that in general, the function $f$ need not be measurable, as it may incorporate \emph{uncountably} many normalizations~\eqref{eq:normaliz} with arbitrary choice of $x_{\lambda}$. The next step removes that issue.

    \emph{Step~4.}  Under the condition $\pi\ll P=\tmu\otimes\tmu$, we must have $\kappa(x,dy)\ll\tmu$ for $\mu$-a.e.\ $x\in\X$ by Fubini's theorem for kernels (or by \cref{rk:kernelWithoutDisintegration}),
 and we may choose $\kappa$ so that this holds without exceptional set. Then $\kappa(x_{\lambda},S_{x_{\lambda}})=1$ implies $\tmu(S_{x_{\lambda}})>0$. As the sets $S_{x_{\lambda}}$ are disjoint and $\tmu$ is a finite measure, $\tmu(S_{x_{\lambda}})>0$ can hold for at most countably many~$\lambda$. Thus there is a \emph{countable} set $\Lambda_{*}\subset\Lambda$ such that  $P(\cup_{\lambda\in\Lambda_{*}}C_{\lambda})=1$ and $P(C_{\lambda})>0$ for $\lambda\in\Lambda_{*}$. The set $\X_{1}:=\cup_{\lambda\in\Lambda_{*}}A_{\lambda}$ is measurable and satisfies $\tmu(\X_{1})=1$. We may redefine $f:=0$ outside $\X_{1}$ and still have $f\oplus f= \lim_{n} (f_{n}\oplus f_{n})$ $\pi$-a.s. This modified function $f$ is then a countable sum of the form $f=\sum_{\lambda\in\Lambda_{*}} f^{\lambda}\1_{D_{\lambda}}$ where $D_{\lambda}$ runs over elements of~\eqref{eq:Apartition} and each $f^{\lambda}$ is defined explicitly in~(i) or~(ii). In particular, $f$ is measurable.
\end{proof}

In the symmetric setting~\eqref{eq:symm}, the optimal coupling $\pi_{*}$ must be symmetric:  if $\pi_{*}(dx,dy)$ is an optimizer, then so is $\pi_{*}(dy,dx)$, hence both coincide, by uniqueness. Combining \cref{le:approxSeqSelf,le:limitDecompSelf}, we can now conclude as in \cref{pr:approxLimit}, completing the proof of \cref{th:main}. %

\begin{proposition}\label{pr:approxLimitSelf}
  Let~\eqref{eq:symm} hold. There exists a measurable $f:\X\to\R$ such that
  $Z_{*} =  (f\oplus f - c)_{+}$ $P$-a.s.
\end{proposition}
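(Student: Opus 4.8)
The plan is to repeat the proof of \cref{pr:approxLimit} almost verbatim, feeding in the symmetric ingredients established in this section. First I would invoke \cref{le:approxSeqSelf} to obtain bounded measurable functions $f_{n}:\X\to\R$, $n\geq1$, with $Z_{*}=\lim_{n}(f_{n}\oplus f_{n}-c)_{+}$ both $P$-a.s.\ and in $L^{2}(P)$. Before passing to the limit, I would record that in the self-transport setting~\eqref{eq:symm} the primal optimizer $\pi_{*}$ is itself symmetric: the objective in~\eqref{eq:QOT} is invariant under the coordinate swap $(x,y)\mapsto(y,x)$, since $c(x,y)=c(y,x)$ and $P=\tmu\otimes\tmu$ is symmetric, so $\pi_{*}(dx,dy)$ and $\pi_{*}(dy,dx)$ are both optimal and hence coincide by the uniqueness in \cref{th:main}\,(ii). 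Thus $\pi_{*}$ is a symmetric element of $\Pi(\mu,\mu)$, and clearly $\pi_{*}\ll P$.

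Next, since $\pi_{*}$ is concentrated on $\{Z_{*}>0\}$, on that set the positive part in the limit is eventually inactive, so
\[
  Z_{*}=\lim_{n\to\infty}(f_{n}\oplus f_{n}-c)=\Big(\lim_{n\to\infty}f_{n}\oplus f_{n}\Big)-c \qquad \pi_{*}\as,
\]
which shows that $h:=Z_{*}+c=\lim_{n}f_{n}\oplus f_{n}$ exists and is finite $\pi_{*}$-a.s. I would then apply \cref{le:limitDecompSelf} to the symmetric coupling $\pi_{*}\ll P$ and the sequence $(f_{n})$, obtaining a \emph{single} measurable function $f:\X\to\R$ with $h=f\oplus f$ $\pi_{*}$-a.s.; hence $Z_{*}=(h-c)_{+}=(f\oplus f-c)_{+}$ $\pi_{*}$-a.s. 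The upgrade to the $P$-a.s.\ identity in the statement is then exactly as in \cref{pr:approxLimit}: the decomposition furnished by \cref{le:limitDecompSelf} in fact holds on a set of full $P$-measure (see Step~4 of its proof), on which $h\leq c$ wherever $Z_{*}=0$ because $(f_{n}\oplus f_{n}-c)_{+}\to0$ there, so $(f\oplus f-c)_{+}=Z_{*}$ $P$-a.s.

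I expect no new obstacle at this stage: all the difficulty has been front-loaded into \cref{le:finiteDimProblemSelf}, \cref{le:approxSeqSelf}, and above all \cref{le:limitDecompSelf}. The genuinely delicate point --- already resolved by that last lemma --- is that the unsymmetrized decomposition argument (\cref{le:limitDecomp}) produces $h=f\oplus g$ with, in general, $f\neq g$, because the normalizations $f'_{n}(x)=f_{n}(x)-f_{n}(x_{\lambda})$ needed to force convergence on each component break the symmetry between the two coordinates; the remedy is to split the agreement set into diagonal, lower-triangular and upper-triangular components and coordinate the normalizations accordingly. Here it only remains to check that the hypotheses of \cref{le:limitDecompSelf} ($\pi_{*}$ symmetric, $\pi_{*}\in\Pi(\mu,\mu)$, $\pi_{*}\ll P$) hold, which they do. Finally, as noted after \cref{le:limitDecompSelf}, combining this with \cref{le:weakDuality} also gives the remaining assertion of \cref{th:main}\,(iv), namely that the dual problem~\eqref{eq:dual} has the same value $\sD$ when restricted to $f=g$.
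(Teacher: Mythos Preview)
Your proposal is correct and follows essentially the same approach as the paper: verify that $\pi_{*}$ is symmetric (exactly the argument the paper gives just before the proposition), invoke \cref{le:approxSeqSelf} for the symmetric approximants, and then apply \cref{le:limitDecompSelf} in place of \cref{le:limitDecomp}, mirroring the proof of \cref{pr:approxLimit}. Your added commentary on why the normalizations in the non-symmetric decomposition would break $f=g$, and the final remark that \cref{le:weakDuality} then yields the rest of \cref{th:main}\,(iv), are accurate and in line with the paper's presentation.
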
 

\appendix

\section{Examples with Constant Cost}

If $c\equiv0$ and $(\mu,\nu)=(\tmu,\tnu)$, then clearly $\pi_{*}=P$. In particular, $\pi_{*}$ has full support. The next example illustrates that when $(\mu,\nu)\neq(\tmu,\tnu)$ are different (but still equivalent), the optimal coupling and even the optimal support can change.
  
\begin{example}[Reference can change optimal support]\label{ex:zeroCostDiscrete}
  Let $\X=\Y=\{0,1\}$ and $c\equiv0$.  Let
  \[
    \tmu=\tnu=\frac12(\delta_{0}+\delta_{1}), \qquad \mu=\nu=(1-\lambda)\delta_{0}+\lambda\delta_{1}, \qquad \lambda\in (0,1/4].
  \]
  We claim that the optimal density is 
  \begin{align*}
      Z_{*}:=(f\oplus f)_{+}, \qquad f(0) = 2-4\lambda, \quad f(1) = -2+8\lambda
  \end{align*} 
  and in particular that the optimal support is $\spt \pi_{*} =\{(0,0),(0,1),(1,0)\}$.
  
  By \cref{th:main}, it suffices to check that $(f\oplus f)_{+}$ is the $P$-density of a coupling. Indeed, $d\pi:=(f\oplus f)_{+}\,dP$ has weights
  \begin{align*}
    \pi\{0,0\}&=[f(0)+f(0)]/4 = 1-2\lambda,\\
    \pi\{0,1\}=\pi\{1,0\}&=[f(0)+f(1)]/4 = \lambda, \\
    \pi\{1,1\}&=[f(1)+f(1)]_{+}/4 = 0,
  \end{align*} 
  showing that $\pi\in\Pi(\mu,\nu)$.
\end{example}

Stated for general marginal spaces, the next proposition elaborates on \cref{ex:zeroCostDiscrete} by giving a sharp condition for $\pi_{*}$ to have (or not have) full support, or more precisely, to be equivalent to~$P$.

\begin{proposition}\label{pr:zeroCost}
  Let $c\equiv0$. 
Then
  \begin{align*}
    \pi_{*}\sim P \qquad\Longleftrightarrow\qquad \frac{d\mu}{d\tmu}  + \frac{d\nu}{d\tnu} >1\quad P\as
  \end{align*} 
   In that case, $d\pi_{*}/dP=\frac{d\mu}{d\tmu}  + \frac{d\nu}{d\tnu} -1$ and $(f,g)=(\frac{d\mu}{d\tmu} -\frac12,\frac{d\nu}{d\tnu} -\frac12)$ are potentials.
   
   In particular, if $\nu=\tnu$, the optimal coupling is $\pi_{*}=\mu\otimes\nu$ for any $\mu\sim\tmu$.
\end{proposition}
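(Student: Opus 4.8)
The plan is to verify the claimed formula directly via \cref{th:main}, which says that any pair $(f,g)$ for which $(f\oplus g-c)_+$ is the $P$-density of some coupling in $\Pi(\mu,\nu)$ automatically yields potentials and the primal optimizer. Since $c\equiv 0$, this reduces to finding $f:\X\to\R$, $g:\Y\to\R$ with $(f\oplus g)_+\,dP\in\Pi(\mu,\nu)$. The candidate $d\pi_*/dP=\frac{d\mu}{d\tmu}+\frac{d\nu}{d\tnu}-1$ is the natural guess: if this quantity is nonnegative $P$-a.s., then one checks its $\tnu$-marginal density is $\int_{\Y}(\frac{d\mu}{d\tmu}(x)+\frac{d\nu}{d\tnu}(y)-1)\,\tnu(dy)=\frac{d\mu}{d\tmu}(x)\cdot 1 + 1 - 1=\frac{d\mu}{d\tmu}(x)$ using $\int \frac{d\nu}{d\tnu}\,d\tnu=1$, and symmetrically for the $\tmu$-marginal; so by the (b)$\Leftrightarrow$(d) equivalence in \cref{th:main}\,(iii) this is a coupling. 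Taking $f=\frac{d\mu}{d\tmu}-\tfrac12$ and $g=\frac{d\nu}{d\tnu}-\tfrac12$ gives $f\oplus g=\frac{d\mu}{d\tmu}+\frac{d\nu}{d\tnu}-1$, and when this is $\geq0$ $P$-a.s.\ we have $(f\oplus g)_+=f\oplus g=d\pi_*/dP$, so $(f,g)$ are potentials by \cref{th:main}.

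For the forward implication of the equivalence, I would argue: if $\pi_*\sim P$, then $Z_*:=d\pi_*/dP>0$ $P$-a.s. By \cref{th:main}\,(iii), $Z_*=(f\oplus g)_+$ for some potentials $(f,g)$ (here $c\equiv0$); since $Z_*>0$, this forces $f\oplus g>0$ and $Z_*=f\oplus g$ $P$-a.s. Now the marginal equations \eqref{eq:marginalEqnu}, \eqref{eq:marginalEqmu} read $\int_{\Y}(f(x)+g(y))\,\tnu(dy)=\frac{d\mu}{d\tmu}(x)$ and $\int_{\X}(f(x)+g(y))\,\tmu(dx)=\frac{d\nu}{d\tnu}(y)$; writing $\bar g:=\int g\,d\tnu$ and $\bar f:=\int f\,d\tmu$, the first gives $f(x)=\frac{d\mu}{d\tmu}(x)-\bar g$ and the second $g(y)=\frac{d\nu}{d\tnu}(y)-\bar f$. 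Substituting back into either equation yields $\bar f+\bar g=1$, hence $f\oplus g=\frac{d\mu}{d\tmu}+\frac{d\nu}{d\tnu}-1$ $P$-a.s., and positivity of $f\oplus g$ gives the stated strict inequality $\frac{d\mu}{d\tmu}+\frac{d\nu}{d\tnu}>1$ $P$-a.s. The same computation simultaneously pins down $d\pi_*/dP$ and the potentials up to the (here already fixed) additive constant, so the ``in that case'' clause comes for free. The final ``in particular'' is immediate: if $\nu=\tnu$ then $\frac{d\nu}{d\tnu}\equiv1$, so $\frac{d\mu}{d\tmu}+\frac{d\nu}{d\tnu}=\frac{d\mu}{d\tmu}+1>1$ $P$-a.s.\ (as $\frac{d\mu}{d\tmu}>0$ $P$-a.s.\ by $\mu\sim\tmu$), and then $d\pi_*/dP=\frac{d\mu}{d\tmu}$, i.e.\ $\pi_*=\mu\otimes\tnu=\mu\otimes\nu$.

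The only genuinely delicate point is whether we are entitled to conclude from $Z_*=(f\oplus g)_+$ with $Z_*>0$ $P$-a.s.\ that the \emph{everywhere} relation $Z_*=f\oplus g$ can be used inside the marginal integrals; but this is not an issue since \eqref{eq:marginalEqnu}–\eqref{eq:marginalEqmu} are stated $P$-a.s.\ anyway and $(f\oplus g-c)_+=f\oplus g$ holds $P$-a.s.\ on $\{Z_*>0\}=\X\times\Y$ up to a $P$-nullset. A second minor subtlety: one should note that the lower-bound/integrability lemmas (\cref{le:lowerBounds}, \cref{le:fgIntegrable}) already guarantee $f\in L^1(\mu)$, $g\in L^1(\nu)$, so $\bar f,\bar g$ are finite and the manipulation of averages is legitimate; with $c\equiv0\in L^\infty(P)$ this is automatic. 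I expect the reverse direction (constructing the coupling from nonnegativity) to be entirely routine, and the forward direction (extracting the formula from the marginal equations) to be the conceptual core, though still short.
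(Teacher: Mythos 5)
Your proposal is correct and follows essentially the same route as the paper's proof: the forward direction extracts $f\oplus g=\frac{d\mu}{d\tmu}+\frac{d\nu}{d\tnu}-1$ from the marginal equations \eqref{eq:marginalEqnu}--\eqref{eq:marginalEqmu} using strict positivity of $Z_*$, and the converse verifies that $(\frac{d\mu}{d\tmu}-\tfrac12,\frac{d\nu}{d\tnu}-\tfrac12)$ solves those equations and then invokes \cref{th:main}. The only difference is that you spell out the normalization $\bar f+\bar g=1$ explicitly, which the paper compresses into ``which amounts to.''
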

  
\begin{proof}
   Suppose that $\pi_{*}\sim P$. By \cref{th:main}, $0<Z_{*}=(f\oplus g -c)_{+}=f\oplus g$ $P$-a.s.\ for some potentials $f,g$. Then \eqref{eq:marginalEqnu} and~\eqref{eq:marginalEqmu} become
  $
    f=\frac{d\mu}{d\tmu}  - \int g\,d\tnu$ and $g=\frac{d\nu}{d\tnu}  - \int f\,d\tmu       $,
  which amounts to
  \begin{align*}
    f\oplus g = \frac{d\mu}{d\tmu}  \oplus \frac{d\nu}{d\tnu}  - 1 \quad P\as
  \end{align*}      
  In particular, $Z_{*}>0$ yields $\frac{d\mu}{d\tmu}  + \frac{d\nu}{d\tnu} >1$ $P$-a.s.
  
  Conversely, let $\frac{d\mu}{d\tmu}  + \frac{d\nu}{d\tnu} >1$ $P$-a.s.\ and define $(f,g):=(\frac{d\mu}{d\tmu}- \frac12,\frac{d\nu}{d\tnu}  - \frac12 )$. Going backwards, $(f,g)$ solve~\eqref{eq:marginalEqnu} and~\eqref{eq:marginalEqmu}, hence \cref{th:main} shows that $Z_{*}=(f\oplus g -c)_{+}=f\oplus g>0$. In particular, $\pi_{*}\sim P$.
  
  If $\nu=\tnu$, then $\frac{d\mu}{d\tmu}  + \frac{d\nu}{d\tnu} =\frac{d\mu}{d\tmu}+1>1$ $P$-a.s.\ and the claim follows.
\end{proof}

\section{Technical Remarks}

  For simplicity, we have assumed in the main text that the spaces $\X,\Y$ are Polish. In fact, we do not make direct use of the topology, and we can allow much more general spaces.

\begin{remark}[General spaces]\label{rk:minimalAssumptions}
  Our results on regularized optimal transport hold for arbitrary probability spaces $(\X,\cF_{\X},\mu)$ and $(\Y,\cF_{\Y},\nu)$ with countably generated $\sigma$-fields, not necessarily Polish. %
Indeed, countably generated $\sigma$-fields are sufficient for the finite-dimensional approximations in the proofs of \cref{le:approxSeq,le:approxSeqSelf}: If $\cF$ is countably generated, there exists a countable $\pi$-system $\cF_{0}=(A_{k})_{k\geq1}$ generating $\cF$ (meaning that $\cF_{0}$ is stable under pairwise intersection). Recall that any probability measure on $\cF$ is uniquely characterized by its restriction to $\cF_{0}$. The functions $\phi_{k}^{\mu}=\1_{A_{k}}-\mu(A_{k})$ then yield the desired sequence in the proofs of \cref{le:approxSeq,le:approxSeqSelf}.

The proofs of \cref{le:limitDecomp,le:limitDecompSelf} %
use the existence of disintegrations; i.e., regular conditional distributions. However, we only apply those results in the absolutely continuous case $\pi\ll P$, and then that existence holds on general spaces (\cref{rk:kernelWithoutDisintegration} below). 
\end{remark} 

The last remark recalls how to construct disintegrations from joint densities, without need for Polish or Blackwell/Souslin spaces (cf.\ \cite[Section~10.4]{Bogachev.07volII}).

\begin{remark}[Disintegration from density]\label{rk:kernelWithoutDisintegration}
  Let $\mu,\tmu\in\cP(\X)$ and $\nu,\tnu\in\cP(\Y)$ be probability measures on arbitrary measurable spaces $(\X,\cF_{\X})$ and  $(\Y,\cF_{\Y})$, and let $\pi\in\Pi(\mu,\nu)$ satisfy $\pi\ll \tmu\otimes\tnu$. Then there is a disintegration $\pi(dx,dy)=\mu(dx)\otimes \kappa(x,dy)$ with $\kappa(x,dy)\ll \tnu(dy)$ for all $x\in\X$. 
  
  This is a standard fact from probability theory. Indeed, note that $\pi\ll \tmu\otimes\tnu$ immediately implies $\mu\ll\tmu$. Let $D_{\mu}=\frac{d\mu}{d\tmu}$ and $D_{\pi}=\frac{d\pi}{d(\tmu\otimes\tnu)}$, and define $\kappa(x,dy):=\frac{D_{\pi}(x,y)}{D_{\mu}(x)}\1_{D_{\mu}(x)\neq0}\,\tnu(dy)$. Then $\kappa$ is a Markov kernel and for any $A\in\cF_{\X}$ and $B\in\cF_{\Y}$,
  \begin{align*}
    (\mu\otimes\kappa)(A\times B) = \int_{A} \left(\int_{B} \frac{D_{\pi}(x,y)}{D_{\mu}(x)}\1_{D_{\mu}(x)\neq0}\,\tnu(dy)\right)\,\mu(dx)=\pi(A\times B).
  \end{align*} 
\end{remark} 

\newcommand{\dummy}[1]{}

\end{document}